\newtheorem{lemma}{Lemma}[section]
\newtheorem{theorem}{Theorem}[section]
\newtheorem{remark}{Remark}[section]
\newtheorem{corollary}{Corollary}[section]
\numberwithin{equation}{section}
\begin{document}
\title{\bf Negative Sobolev Spaces and the Two-species Vlasov-Maxwell-Landau System in the Whole Space}
\author{{\bf Yuanjie Lei}\quad and \quad {\bf Huijiang Zhao}\thanks{Corresponding author. E-mail: hhjjzhao@hotmail.com}\\[2mm]
       School of Mathematics and Statistics,
       Wuhan University, Wuhan 430072, China}
\date{}
\vskip 0.2cm
\maketitle
\begin{abstract}
A global solvability result of the Cauchy problem of the two-species Vlasov-Maxwell-Landau system near a given global Maxwellian is established by employing an approach different than that of \cite{Duan-VML}. Compared with that of \cite{Duan-VML}, the minimal regularity index and the smallness assumptions we imposed on the initial data are weaker. Our analysis does not rely on the decay of the corresponding linearized system and the Duhamel principle and thus it can be used to treat the one-species Vlasov-Maxwell-Landau system for the case of $\gamma>-3$ and the one-species Vlasov-Maxwell-Boltzmann system for the case of $-1<\gamma\leq 1$ to deduce the global existence results together with the corresponding temporal decay estimates.
\end{abstract}
\section{Introduction and main results}
\setcounter{equation}{0}

The motion of a fully ionized plasma consisting of only two species particles (e.g. electrons and ions) under
the influence of the self-consistent Lorentz force and binary collisions is governed by the following two-species Vlasov-Maxwell-Landau (called VML in the sequel for simplicity) system
\begin{equation} \label{VML}
\begin{cases}
 \partial_tF_++ v  \cdot\nabla_xF_++(E+v\times B)\cdot\nabla_{ v  }F_+=Q(F_+,F_+)+Q(F_+,F_-),\\
 \partial_tF_-+ v  \cdot\nabla_xF_--(E+v\times B)\cdot\nabla_{ v  }F_-=Q(F_-,F_+)+Q(F_-,F_-),\\
 \partial_tE-\nabla_x\times B=-{\displaystyle\int_{\mathbb{R}^3}}v(F_+-F_-)dv,\\
 \partial_tB+\nabla_x\times E=0,\\
 \nabla_x\cdot E={\displaystyle\int_{\mathbb{R}^3}}(F_+-F_-)dv,\quad \nabla_x\cdot B=0.
  \end{cases}
  \end{equation}
Here the unknown functions $F_\pm= F_\pm(t,x, v) \geq  0$
are the number density functions for the ions ($+$) and electrons ($-$) respectively, at time $t\geq 0$, position $x = (x_1, x_2, x_3)\in {\mathbb{R}}^3$, velocity $ v=( v_1,  v_2,  v_3) \in {\mathbb{R}}^3$. $E(t,x)$ and $B(t,x)$ denote the electro and magnetic fields, respectively. The collision between charged particles is given by
\begin{equation*}
   \begin{split}
     Q(F,G)=&\nabla_{ v}\cdot\int_{\mathbf{R}^3}\Phi( v- v')
     \Big\{F( v')\nabla_{ v}G( v)-\nabla_{ v'}F( v')G( v)\Big\}d v'\\
     =&\sum_{i,j=1}^3\partial_i\int_{\mathbf{R}^3}\Phi^{ij}( v- v')
     \Big\{F( v')\partial_jG( v)-\partial_jF( v)G( v')\Big\}d v',
   \end{split}
\end{equation*}
where $\partial_i=\frac{\partial}{\partial v_i}$ for $i=1,2,3$ and the non-negative $3\times 3$ matrix $\Phi(v)=\left(\Phi^{ij}( v)\right)_{3\times 3}$ is the fundamental Landau (or Fokker-Planck) kernel \cite{Hinton}:
\begin{equation*}
  \Phi^{ij}( v)=\bigg(\delta_{ij}-\frac{ v_i v_j}{| v|^2}\bigg)| v|^{\gamma+2}, \quad -3\leq\gamma< -2.
\end{equation*}
The case $\gamma=-3$ corresponds to the Coulomb potential for the classical Landau operator which is originally found by Landau (1936). For the mathematical discussions of the general collision kernel with $\gamma>-3$, see the
review paper \cite{Villani-02} by Villani.

Notice that for notational simplicity and without loss of generality, all the
physical parameters appearing in the system, such as the particle masses $m_\pm$, their charges $e_\pm$, the light
speed $c$, and all other involving constants, have been chosen to be unit.
Moreover, the relativistic effects are neglected in our discussion of the model. This is normally
justified if the plasma temperature is much lower than the electron rest mass, cf. \cite{Helander-Sigmar}.
In addition, for the non dimensional representation and the physical
background of the system (\ref{VML}), interested readers may refer to \cite{Helander-Sigmar,Hinton,Krall-Trivelpiece,Lions,Villani-02} and the references cited therein.

As in \cite{Duan-VML}, we study the Cauchy problem (\ref{VML}) around the following normalized global Maxwellian
$$
  \mu=\mu(v)=(2\pi)^{-{3}/{2}}e^{-| v  |^2/2}
$$
with prescribed initial data
 \[
  F_\pm (0,x,v)=F_{0,\pm}(v,x), \quad E(0,x)=E_0(x), \quad B(0,x)=B_0(x)
  \]
satisfying the compatibility conditions
  \[
  \nabla_x\cdot E_0=\int_{\mathbb{R}^3}(F_{0,+}-F_{0,-})dv, \quad \nabla_x\cdot B_0=0.
  \]

For this purpose, we define the perturbation $f_\pm=f_\pm(t,x, v)$ by
$
F_\pm(t, x,  v  ) = \mu+ \mu^{1/2}f_\pm(t, x,  v).
$
Then, the Cauchy problem (\ref{VML}) of the VML system is reformulated as
\begin{equation} \label{f}
\begin{cases}
  \partial_tf_\pm+ v  \cdot\nabla_xf_\pm\pm(E+v\times B)\cdot\nabla_{ v  }f_\pm\mp E \cdot v \mu^{1/2}\mp \frac12 E\cdot v f_\pm+{ L}_\pm f={\Gamma}_\pm(f,f),\\
\partial_tE-\nabla_x\times B=-{\displaystyle\int_{\mathbb{R}^3}}v\mu^{1/2}(f_+-f_-)dv,\\
\partial_tB+\nabla_x\times E=0,\\
\nabla_x\cdot E={\displaystyle\int_{\mathbb{R}^3}}\mu^{1/2}(f_+-f_-)dv,\quad \nabla_x\cdot B=0
\end{cases}
\end{equation}
with initial data
\begin{equation}\label{f-initial}
f_\pm(0,x,v)=f_{0,\pm}(x,v),  \quad E(0,x)=E_0(x), \quad B(0,x)=B_0(x),
\end{equation}
which satisfy the compatibility conditions
\begin{equation}\label{compatibility conditions}
  \nabla_x\cdot E_0=\int_{\mathbb{R}^3}\mu^{1/2}(f_{0,+}-f_{0,-})dv, \quad \nabla_x\cdot B_0=0.
\end{equation}

If we denote $f=\left[f_+,f_-\right]$, then $(\ref{f})_1$ can be rewritten as
\begin{equation}\label{f-sgn}
\partial_t f+v\cdot\nabla_xf+q_0(E+v\times B)\cdot\nabla_v f-E\cdot v\mu^{1/2}q_1+Lf=\frac{q_0}{2}E\cdot vf+\Gamma(f,f)
\end{equation}
Here, $q_0=diag(1,-1)$, $q_1=[1,-1]$, and the linearized collision operator $Lf$ and the nonlinear collision term $\Gamma(f,f)$ are respectively defined by
\[Lf=[L_+f,L_-f],\quad\quad\quad\Gamma(f,g)=[\Gamma_+(f,g),\Gamma_-(f,g)]\]
with
\begin{equation*}
\begin{aligned}
{ L}_\pm f =& -{\bf \mu}^{-1/2}
\left\{{Q\left( \mu,{\bf \mu}^{1/2}(f_\pm+f_\mp)\right)+ 2Q\left( \mu^{1/2}f_\pm, \mu\right)}\right\},\\
{ \Gamma}_{\pm}(f,g) =&{\bf \mu}^{-1/2}Q\left({\bf \mu}^{1/2}f_{\pm},{\bf \mu}^{1/2}g_\pm\right)
+{\bf \mu}^{-1/2}Q\left({\bf \mu}^{1/2}f_{\pm},{\bf \mu}^{1/2}g_\mp\right).
\end{aligned}
\end{equation*}

For the linearized Landau collision operator ${L}$, it is well known, cf. \cite{Guo-CMP-02}, that it is non-negative and the null space $\mathcal{N}$ of ${ L}$ is given by
\begin{equation*}
{\mathcal{ N}}={\textrm{span}}\left\{[1,0]\mu^{1/2} , [0,1]\mu^{1/2}, [v_i,v_i]{\mu}^{1/2} (1\leq i\leq3),[|v|^2,|v|^2]{\bf \mu}^{1/2}\right\}.
\end{equation*}
If we define ${\bf P}$ as the orthogonal projection from $L^2({\mathbb{R}}^3_ v)\times L^2({\mathbb{R}}^3_ v)$ to $\mathcal{N}$, then for any given function $f(t, x, v )\in L^2({\mathbb{R}}^3_ v)$, one has
\begin{equation*}
  {\bf P}f ={a_+(t, x)[1,0]\mu^{1/2}+a_-(t, x)[0,1]\mu^{1/2}+\sum_{i=1}^{3}b_i(t, x) [1,1]v_i{\mu}^{1/2}+c(t, x)[1,1](| v|^2-3)}{\bf \mu}^{1/2}
\end{equation*}
with
\begin{equation*}
  a_\pm=\int_{{\mathbb{R}}^3}{\bf \mu}^{1/2}f_\pm d v,\quad
  b_i=\frac12\int_{{\mathbb{R}}^3} v  _i {\bf \mu}^{1/2}(f_++f_-)d v,\quad
  c=\frac{1}{6}\int_{{\mathbb{R}}^3}(| v|^2-3){\bf \mu}^{1/2}(f_++f_-) d v.
\end{equation*}
Therefore, we have the following macro-micro decomposition with respect to the given global Maxwellian $\mu$ which was introduced in \cite{Guo-IUMJ-04}
\begin{equation}\label{macro-micro}
 f(t,x, v)={\bf P}f(t,x, v)+\{{\bf I}-{\bf P}\}f(t, x, v)
\end{equation}
where ${\bf I}$ denotes the identity operator and ${\bf P}f$ and $\{{\bf I}-{\bf P}\}f$ are called the macroscopic and the microscopic component of $f(t,x,v)$, respectively.

What we are interested in this paper is to construct global smooth solutions to the Cauchy problem (\ref{f}), (\ref{f-initial}) near the global Maxwellian $\mu$ for $-3\leq \gamma<-2$ which includes the Coulomb potential. Before stating our main results, we first introduce some notations used throughout this manuscript. $C$ denotes some positive constant (generally large) and $\varepsilon$, $\kappa$, $\lambda$ stand for some positive constant (generally small), where $C$, $\varepsilon$, $\kappa$, and $\lambda$ may take different values in different places.
$A\lesssim B$ means that there is a generic constant $C> 0$ such that $A \leq   CB$. $A \sim B$ means $A\lesssim B$ and $B\lesssim A$. The multi-indices $ \alpha= [\alpha_1,\alpha_2, \alpha_3]$ and $\beta = [\beta_1, \beta_2, \beta_3]$ will be used to record spatial and velocity derivatives, respectively. And $\partial^{\alpha}_{\beta}=\partial^{\alpha_1}_{x_1}\partial^{\alpha_2}_{x_2}\partial^{\alpha_3}_{x_3}
\partial^{\beta_1}_{ v_1}\partial^{\beta_2}_{ v_2}\partial^{\beta_3}_{ v_3}$. Similarly, the notation $\partial^{\alpha}$ will be used when $\beta=0$ and likewise for $\partial_{\beta}$. The length
of $\alpha$ is denoted by $|\alpha|=\alpha_1 +\alpha_2 +\alpha_3$. $\alpha'\leq  \alpha$ means that no component of $\alpha'$ is greater than the corresponding component of $\alpha$, and $\alpha'<\alpha$ means that $\alpha'\leq  \alpha$ and $|\alpha'|<|\alpha|$. And it is convenient to write $\nabla_x^k=\partial^{\alpha}$ with $|\alpha|=k$. We also use $\langle\cdot,\cdot\rangle$ to denote the ${L^2_{ v}}$ inner product in ${\mathbb{ R}}^3_{ v}$, with the ${L^2}$ norm $|\cdot|_{2}$. For notational simplicity, $(\cdot, \cdot)$ denotes the ${L^2}$ inner product either in ${\mathbb{ R}}^3_{x}\times{\mathbb{ R}}^3_{ v }$ or in ${\mathbb{R}}^3_{x}$ with the $L^2$ norm $\|\cdot\|$.

As in \cite{Guo-CPDE-12}, we introduce the operator $\Lambda^s$ with $s\in\mathbb{R}$ by
\[
\left(\Lambda^sg\right)(t,x,v)=\int_{\mathbb{R}^3}|\xi|^{s}\hat{g}(t,\xi,v)e^{2\pi ix\cdot\xi}d\xi
=\int_{\mathbb{R}^3}|\xi|^{s}\mathcal{F}[g](t,\xi,v)e^{2\pi ix\cdot\xi}d\xi
\]
with $\hat{g}(t,\xi,v)\equiv\mathcal{F}[g](t,\xi,v)$ being the Fourier transform of $g(t,x,v)$ with respect to $x$.  The homogeneous Sobolev space $\dot{H}^s$ is the Banach space consisting of all $g$ satisfying  $\|g\|_{\dot{H}^{s}}<+\infty$, where
\[
\|g(t)\|_{\dot{H}^s}\equiv\left\|\left(\Lambda^s g\right)(t,x,v)\right\|_{L^2_{x,v}}=\left\||\xi|^s\hat{g}(t,\xi,v)\right\|_{L^2_{\xi,v}}.
\]
We will also use $H^s\left({\mathbb{R}}^3\right)$ to denote the usual Sobolev spaces with norm $\|\cdot\|_{H^s}$. We shall use $\|\cdot\|_p$ to record $L^p$ norms in either ${\mathbb{R}}^3_x\times{\mathbb{R}}^3_v$ or ${\mathbb{R}}^3_x$. Letting $w(v)\geq 1$ be a weight function, we use $\|\cdot\|_{p,w}$ to denote the weighted $L^p$ norms in ${\mathbb{R}}^3_x\times{\mathbb{R}}^3_v$ or ${\mathbb{R}}^3_x$. We also use $|\cdot|_p$ to denote $L^p$ norms in  ${\mathbb{R}}^3_v$, and $|\cdot|_{p,w}$ for the weighted $L^p$ norms in  ${\mathbb{R}}^3_v$. We will use the mixed
spatial-velocity spaces, e.g., $L^2_vH^s_x=L^2\left({\mathbb{R}}^3_v; H^s\left({\mathbb{R}}^3_x\right)\right)$, etc.

The Landau collision frequency $\sigma^{ij}( v)$ is given by
\begin{equation*}
  \sigma^{ij}( v)=\Phi^{ij}\ast{\bf \mu}( v)=\int_{{\mathbb{R}}^3}\Phi^{ij}( v- v^*){\bf \mu}( v^*)d v^*.
\end{equation*}
As in \cite{Duan-VML}, we introduce the following time-velocity weight function corresponding to the Landau operator:
\begin{equation}\label{weight}
    w_{\ell}(t, v)
    =\langle v\rangle^{-(\gamma+2)\ell}e^{\frac{q\langle v\rangle^2}{(1+t)^{\vartheta}}},
    \quad \langle v\rangle=\sqrt{1+| v|^2},\\
   \quad -3\leq\gamma<-2,\quad 0<q \ll 1
\end{equation}
with the parameter $\vartheta$ being taken as
$$
 \left\{
 \begin{array}{ll}
0<\vartheta\leq \frac s2, \quad & when\ \  s\in[\frac12,1],\\[2mm]
 0<\vartheta\leq \frac s2-\frac12,\quad & when \ \ s\in(1,\frac32),
 \end{array}
 \right.
 $$
if we define the the weighted norms
\begin{equation}\label{norm}
\begin{split}
    |f|_{\sigma,w}
    \sim&\left|\langle v\rangle^{\frac{\gamma+2}{2}}f\right|_{2,w}
    +\left|\langle v\rangle^{\frac{\gamma}{2}}\nabla_{ v}f\cdot\frac{ v}{| v|}\right|_{2,w}
    +\left|\langle v\rangle^{\frac{\gamma+2}{2}}\nabla_{ v}f\times\frac{ v}{| v|}\right|_{2,w}
\end{split}
\end{equation}
with
$$
|f|_{2,w}=|wf|_2,\quad \|f\|_{\sigma,w}=\left\||f|_{\sigma,w}\right\|,\quad |f|_{\sigma}=|f|_{\sigma,0},\quad \|f\|_{\sigma}=\left\||f|_{\sigma}\right\|,
$$
then it is well known, cf. \cite{Guo-CMP-02, Wang-12}, that the linear operator ${L}\geq0$ and is locally coercive in the sense that
\begin{equation}\label{coercive}
  \langle{ L}f,f\rangle\gtrsim |{\bf \{I-P\}}f|_{\sigma}^2.
\end{equation}
Moreover, for an integer $N\geq0$, $\ell\in \mathbb{R}$, and a given $f(t,x,v)$, we define the energy functional $\bar{\mathcal{E}}_{N,\ell}(t)$ and the corresponding energy dissipation rate functional $\bar{\mathcal{D}}_{N,\ell}(t)$ by
\begin{equation}\label{E}
\mathcal{\bar{E}}_{N,\ell}(t)\sim{\mathcal{E}}_{N,\ell}(t)+\left\|\Lambda^{-s}(f,E,B)\right\|^2
\end{equation}
and
\begin{equation}\label{D}
\mathcal{\bar{D}}_{N,\ell}(t)\sim{\mathcal{D}}_{N,\ell}(t)+\left\|\Lambda^{1-s}(a,b,c,E,B)\right\|^2
+\left\|\Lambda^{-s}(a_+-a_-,E)\right\|^2,
\end{equation}
respectively. Here
\begin{equation}\label{E-}
{\mathcal{E}}_{N,\ell}(t)\sim\sum_{|\alpha|+|\beta|\leq N}\left\|w_{\ell-|\beta|}
\partial^{\alpha}_{\beta}f\right\|^2+\|(E,B)\|_{H^N}^2,
\end{equation}
\begin{equation}\label{D-}
\begin{split}
{\mathcal{D}}_{N,\ell}(t)\sim&\sum_{1\leq|\alpha|\leq N} \left\|\partial^{\alpha}(a_{\pm},b,c)\right\|^2 +\sum_{|\alpha|+|\beta|\leq N}\left\|w_{\ell-|\beta|}
\partial^{\alpha}_{\beta}{\bf\{I-P\}}f\right\|^2_{\sigma}+\|a_+-a_-\|^2\\[2mm]
&+\|E\|_{H^{N-1}}^2+\left\|\nabla_x B\right\|_{H^{N-2}}^2+(1+t)^{-1-\vartheta}\sum_{|\alpha|+|\beta|\leq N}\left\|\langle v\rangle w_{\ell-|\beta|}\partial^{\alpha}_{\beta}{\bf\{I-P\}}f\right\|^2,
\end{split}
\end{equation}
In our analysis, we also need to define the energy functional without weight $\mathcal{E}_{N}(t)$, the higher order energy functional without weight $\mathcal{E}_{N_0}^{k}(t)$, and the higher order energy functional with weight $\mathcal{E}^k_{N_0,\ell}(t)$ as follows:
\begin{equation}\label{E_N}
\mathcal{E}_{N}(t)\sim\sum_{|\alpha|=0}^{N}\left\|\partial^\alpha(f,E,B)\right\|^2,
\end{equation}
\begin{equation}\label{E_k}
\mathcal{E}_{N_0}^{k}(t)\sim\sum_{|\alpha|=k}^{N_0}\left\|\partial^\alpha(f,E,B)\right\|^2,
\end{equation}
\begin{equation}\label{E_l-k}
\mathcal{E}^k_{N_0,\ell}(t)\sim\sum_{|\alpha|+|\beta\leq N,\atop{|\alpha|\geq k}}\left\|w_{\ell-|\beta|}\partial^\alpha_\beta f\right\|^2+\sum_{|\alpha|=k}^{N_0}\left\|\partial^\alpha(E,B)\right\|^2,
\end{equation}
and the corresponding energy dissipation rate functionals are given by
\begin{equation}\label{D_N}
\begin{split}
\mathcal{D}_{N}(t)\sim&
\|(E,a_+-a_-)\|^2+\sum_{1\leq|\alpha|\leq N-1}\left\|
\partial^\alpha({\bf P}f,E,B)\right\|^2\\[2mm]
&+\sum_{|\alpha|=N}\left\|\partial^\alpha {\bf P}f\right\|^2+\sum_{ |\alpha| \leq N}\left\|\partial^\alpha{\bf\{I-P\}}f\right\|^2_{\sigma},
\end{split}
\end{equation}
\begin{equation}\label{D_k}
\begin{split}
\mathcal{D}_{N_0}^{k}(t)\sim&
\left\|\nabla^{k}(E,a_+-a_-)\right\|^2+\sum_{k+1\leq|\alpha|\leq N_0-1}\left\|
\partial^\alpha({\bf P}f,E,B)\right\|^2\\[2mm]
&+\sum_{|\alpha|=N_0}\left\|\partial^\alpha {\bf P}f\right\|^2+\sum_{k\leq |\alpha| \leq N_0}\left\|\partial^\alpha{\bf\{I-P\}}f\right\|^2_{\sigma},
\end{split}
\end{equation}
\begin{equation}\label{D_l-k}
\begin{split}
\mathcal{D}_{N_0,\ell}^{k}(t)\sim&
\left\|\nabla^{k}(E,a_+-a_-)\right\|^2+\sum_{k+1\leq |\alpha|\leq N_0-1}\left\|
\partial^\alpha({\bf P}f,E,B)\right\|^2+\sum_{|\alpha|=N_0}\left\|\partial^\alpha {\bf P}f\right\|^2\\[2mm]
&+(1+t)^{-1-\vartheta}\sum_{|\alpha|+|\beta\leq N,\atop{|\alpha|\geq k}}\left\|\langle v\rangle w_{\ell-|\beta|}\partial^\alpha_\beta{\bf\{I-P\}}f\right\|^2+\sum_{|\alpha|+|\beta\leq N,\atop{|\alpha|\geq k}}\left\|w_{\ell-|\beta|}\partial^\alpha_\beta{\bf\{I-P\}}f\right\|^2_{\sigma},
\end{split}
\end{equation}
respectively. It is worth pointing out that the energy functional $\mathcal{E}_{N}^{k}(t)$ consists of only the terms whose possible derivatives are with respect to the spatial variable $x$ only.

With the above preparation in hand, our main result concerning the global solvability of the Cauchy problem (\ref{f}), (\ref{f-initial}) can be stated as follows.
\begin{theorem}\label{Th1.1} Suppose that $F_0(x,v)=\mu+\sqrt{\mu}f_0(x,v)\geq0$, $\frac 12\leq s<\frac 32$, and $-3\leq\gamma<-2$. Let
 $$
 \left\{
 \begin{array}{ll}
 N_0\geq 4, N=2N_0-1, \quad & when\ \  s\in[\frac12,1],\\[2mm]
 N_0\geq 3, N=2N_0,\quad & when \ \ s\in(1,\frac32),
 \end{array}
 \right.
 $$
and take  $l\geq N$ and $l_0\geq l+\frac{\gamma-2}{2(\gamma+2)}$.
Then there exists a positive constant $l'$ which depends only on $\gamma$ and $N_0$ such that if we assume further that
\begin{equation}\label{Y_0}
Y_0=\sum_{|\alpha|+|\beta|\leq N_0}\left\|w_{l_0+l^*-|\beta|}\partial^\alpha_\beta f_0\right\|+\sum_{|\alpha|+|\beta|\leq N}\left\|w_{l-|\beta|}\partial^\alpha_\beta f_0\right\|
+\|(E_0,B_0)\|_{H^N\bigcap \dot{H}^{-s}}+\|f_0\|_{\dot{H}^{-s}},\ l^*=l'+\frac{N_0-1}{2}
\end{equation}
is sufficiently small, the Cauchy problem (\ref{f}), (\ref{f-initial}) admits a unique global solution $(f(t,x,v),$ $E(t,x),$ $ B(t,x))$ satisfying $F(t,x,v)=\mu+\sqrt{\mu}f(t,x,v)\geq0$.
\end{theorem}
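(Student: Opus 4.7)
The plan is a continuation argument: couple a standard local well-posedness statement for the perturbed system (\ref{f})--(\ref{f-initial}) in the energy space $\bar{\mathcal{E}}_{N,\ell}$ with uniform a priori bounds on $[0,T]$. Concretely, assume $\bar{\mathcal{E}}_{N,\ell}(t)+\mathcal{E}^{0}_{N_0,l_0+l^*}(t)\le\delta^{2}\ll1$ on $[0,T]$, and aim to close this to $\lesssim Y_0^{2}$ uniformly in $T$. Once this is done, the positivity $F=\mu+\sqrt{\mu}f\ge 0$ follows from a standard maximum-principle argument for kinetic equations, since the smallness embeds into $L^\infty_{t,x,v}$.

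The first task is the weighted microscopic energy identity. For every $|\alpha|+|\beta|\le N$, apply $\partial^\alpha_\beta$ to (\ref{f-sgn}) and pair with $w_{\ell-|\beta|}^{2}\partial^\alpha_\beta f$ in $L^{2}_{x,v}$. Coercivity (\ref{coercive}) furnishes the $\|w_{\ell-|\beta|}\partial^\alpha_\beta\{I-P\}f\|_\sigma^{2}$ contribution to $\mathcal{D}_{N,\ell}$, while $\partial_t$ falling on the time-dependent factor $e^{q\langle v\rangle^{2}/(1+t)^\vartheta}$ in $w_\ell$ produces exactly the extra dissipation $(1+t)^{-1-\vartheta}\|\langle v\rangle w_{\ell-|\beta|}\partial^\alpha_\beta\{I-P\}f\|^{2}$ appearing in (\ref{D-}). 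The Lorentz contributions $\pm(E+v\times B)\cdot\nabla_v f$, the inhomogeneity $\mp E\cdot v\mu^{1/2}$, the bad cubic term $\tfrac12 q_0 E\cdot v f$ and $\Gamma(f,f)$ are then controlled by the standard Landau trilinear and commutator estimates using Sobolev embeddings and the a priori smallness. The missing dissipation on $(a_\pm,b,c)$ and on $(E,B)$ is recovered by projecting (\ref{f-sgn}) against the basis of $\mathcal{N}$ to obtain a damped fluid-type system coupled with Maxwell's equations, and then running a Kawashima-type compensating-function argument together with the compatibility conditions (\ref{compatibility conditions}). This yields $\bar{\mathcal{D}}_{N,\ell}$ in full, and, carried out at the level $k\le|\alpha|\le N_0$, the higher-order dissipations $\mathcal{D}^{k}_{N_0}$ and $\mathcal{D}^{k}_{N_0,\ell}$.

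The second task is the negative-order Sobolev bound. Apply $\Lambda^{-s}$ to (\ref{f-sgn}) and to the Maxwell system, test against $\Lambda^{-s}(f,E,B)$, and estimate $\Lambda^{-s}$ of the quadratic nonlinearities via the Hardy--Littlewood--Sobolev inequality, which is available \emph{precisely} for $0<s<3/2$. Controlling the resulting $L^{3/(3-2s)}$-type quantities by $\bar{\mathcal{E}}_{N,\ell}$ through Sobolev embedding yields time-uniform boundedness of $\|\Lambda^{-s}(f,E,B)\|$. The standard interpolation $\|\nabla^{k}g\|\lesssim\|\Lambda^{-s}g\|^{\theta}\|\nabla^{k+m}g\|^{1-\theta}$, with $\theta=m/(k+m+s)$, converts the energy-dissipation inequality $\frac{d}{dt}\mathcal{E}^{k}_{N_0}+\mathcal{D}^{k}_{N_0}\le 0$ into a Lyapunov inequality $\frac{d}{dt}\mathcal{E}^{k}_{N_0}+C(\mathcal{E}^{k}_{N_0})^{1+1/\theta}\le 0$, giving polynomial decay $\mathcal{E}^{k}_{N_0}(t)\lesssim(1+t)^{-(k+s)}$. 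This decay feeds back into the higher-order weighted hierarchy to absorb the cubic term $E\cdot vf$ and the Lorentz commutators, closing the continuation argument \emph{without} invoking the linearized semigroup or Duhamel's formula.

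The main obstacle is the Coulomb regime $\gamma=-3$, where $|\cdot|_\sigma$ degenerates badly at large $|v|$: commutators of $\partial_\beta$ with $v\cdot\nabla_x$ and with $(v\times B)\cdot\nabla_v$ create high-velocity terms that have no $\sigma$-coercivity available. These are forced to be absorbed by the $(1+t)^{-1-\vartheta}\langle v\rangle^{2}$-weighted gain from differentiating the time-dependent weight, which is exactly why $\vartheta$ must be tuned to $s$ as in (\ref{weight}). The delicate bookkeeping consists in carrying consistent powers of $\langle v\rangle$, of the weight level (hence the gap $l_0-l$ and the need for $l^{*}=l'+(N_0-1)/2$), and of $(1+t)$-factors through every commutator, trilinear, and interpolation estimate, so that the decay exported from the negative-Sobolev bound always dominates the loss $(1+t)^{-1-\vartheta}$ used to create the missing microscopic dissipation.
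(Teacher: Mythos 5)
Your overall strategy coincides with the paper's: a weighted energy method built on the time--velocity weight $w_{\ell}(t,v)$ (with the extra dissipation $(1+t)^{-1-\vartheta}\|\langle v\rangle w_{\ell-|\beta|}\partial^\alpha_\beta\{{\bf I-P}\}f\|^2$ coming from differentiating the exponential factor), macroscopic dissipation recovered through the fluid-type equations from the macro--micro decomposition, a $\dot H^{-s}$ estimate obtained by testing against $|\xi|^{-2s}\hat f$ and using the Riesz-potential (Hardy--Littlewood--Sobolev) inequality, and interpolation of $\frac{d}{dt}\mathcal{E}^k_{N_0}+\mathcal{D}^k_{N_0}\le 0$ against $\|\Lambda^{-s}\cdot\|$ to produce the decay $\mathcal{E}^k_{N_0}(t)\lesssim(1+t)^{-(k+s)}$, which is then fed back to close the estimates without Duhamel. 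To that extent the proposal is on track.

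However, there is a concrete gap: your a priori ansatz demands a \emph{uniform-in-time} bound $\bar{\mathcal{E}}_{N,\ell}(t)\le\delta^2$, and nowhere do you address the regularity-loss of the electromagnetic field, namely that $\|\nabla^{N}(E,B)\|$ (and, at the lower level, $\|\nabla^{N_0}E\|$) does not appear in the corresponding dissipation functionals. In the weighted top-order estimate the linear source $E\cdot v\mu^{1/2}$ produces the term $\sum_{|\alpha|=N}\|\partial^\alpha E\|\,\|\mu^{\delta}\partial^\alpha f\|$ (and at the $N_0$-level a source $\sum_{|\alpha|=N_0}\|\partial^\alpha E\|^2$), which cannot be absorbed into the dissipation nor integrated in time under a mere uniform smallness bound; this is exactly why the paper's time-weighted norm is $X(t)=\sup_\tau\{\bar{\mathcal{E}}_{N_0,l_0+l^*}+\mathcal{E}_N+(1+\tau)^{-\frac{1+\epsilon_0}{2}}\mathcal{E}_{N,l}\}$, i.e.\ the weighted top-order energy is only shown to grow no faster than $(1+t)^{(1+\epsilon_0)/2}$, not to stay bounded. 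Closing even that requires the decay of $\mathcal{E}^1_{N_0,l_0}(t)$ to be strictly faster than $(1+t)^{-1}$ and the $N_0$-order field decay to be time-integrable after multiplication by $(1+t)^{k+s+\epsilon}$, which is precisely where the constraints $N_0\ge4$ for $s\in[\frac12,1]$, $N>\frac53(N_0-1)$ (from the term $(\nabla^{N_0}E\cdot vf,\nabla^{N_0}f)$) and $N\ge N_0+k+s$ enter, forcing $N=2N_0-1$ or $2N_0$. As written, your closure target at top weighted order would fail (or at least is unsupported); you need to replace the uniform bound on $\bar{\mathcal{E}}_{N,\ell}$ by a time-weighted one and supply the bookkeeping that converts the decay of $\mathcal{E}^k_{N_0}$ and $\mathcal{E}^k_{N_0,l_0+i/2}$ into integrability of the un-dissipated top-order field terms.
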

\begin{remark} Several remarks concerning Theorem 1.1 are listed below:
\begin{itemize}
\item Although only the case $-3\leq\gamma<-2$ is studied in this manuscript which corresponds to the so-called soft potential case, the case for hard potentials, i.e $-2\leq\gamma\leq0$, is much simpler and similar result can also be obtained by employing the argument used in this manuscript.
\item In the proof of Lemma \ref{lemma3.12}, we need to ask that the time decay rate of $\mathcal{E}^1_{N_0,l_0}(t)$ is strictly greater than $1$ and to guarantee that such a fact holds, we need to assume that $N_0\geq 4$ for $s\in[\frac12,1]$ and $N_0\geq 3$ for $s\in(1,\frac32)$.
\item Since in the proof of Lemma \ref{lemma4.3}, $N$ is assumed to satisfy $N>\frac53 N_0-\frac53$, while in the proof of Lemma \ref{Lemma4.5}, $N$ is further required to satisfy $N\geq 2N_0-2+s$. Putting these assumptions together, we can take $N=2N_0-1$ for $s\in[\frac12,1]$ and $N=2N_0$ for $s\in(1,\frac32)$.
\item The minimal regularity index, i.e., the lower bound on the parameter $N$, we imposed on the initial data is $N=7$, $N_0=4$ for $s\in[\frac12,1]$ and $N=6$, $N_0=3$ for $s\in(1,\frac32)$.
\item The precise value of the parameter $l'$ can be specified in the proofs of Theorem 1.1, cf. the proof of Lemma \ref{lemma4.3}.
\end{itemize}
\end{remark}
Our second result is concerned with the temporal decay estimates on the global solution $(f(t,x,v),$ $E(t,x),$ $B(t,x))$ obtained in Theorem 1.1. For result in this direction, we have from Theorem 1.1 that
\begin{theorem}\label{Th1.2}
Under the assumptions of Theorem \ref{Th1.1}, we have the following results:
\begin{itemize}
\item[(1).]
For $k=0,1,2,\cdots, N_0-2$, it holds that
\begin{equation}\label{TH2-1}
\mathcal{E}^k_{N_0}(t)\lesssim Y_0^2(1+t)^{-(s+k)}.
\end{equation}
\item[(2).]Let $0\leq i\leq k\leq N_0-3$ be an integer,
it holds that
\begin{equation}\label{TH2-2}
 \begin{split}
\mathcal{E}^k_{N_0,l_0+i/2}(t)
\lesssim Y_0^2(1+t)^{-k-s+i}, \quad \quad i=0,1,\cdots,k.
\end{split}
\end{equation}
Especially when $s\in[1,3/2)$, one has
\begin{equation}\label{TH2-3}
\mathcal{E}^k_{N_0,l_0+k/2+1/2}(t)
\lesssim Y_0^2(1+t)^{1-s}.\quad\quad
\end{equation}
\item[(3).]When $N_0+1\leq|\alpha|\leq N-1$, we have
\begin{equation}\label{TH2-4}
\begin{split}
\left\|\partial^\alpha f\right\|^2
\lesssim Y_0^2(1+t)^{-\frac{(N-|\alpha|)(N_0-2+s)}{N-N_0}}
\end{split}
\end{equation}
and
\begin{equation}\label{TH2-5}
\begin{split}
\left\|w_l\partial^\alpha f\right\|^2
\lesssim Y_0^2(1+t)^{-\frac{(N-|\alpha|)(N_0-3+s)-(1+\epsilon_0)(|\alpha|-N_0)/2}{N-N_0}}.
\end{split}
\end{equation}
\end{itemize}
\end{theorem}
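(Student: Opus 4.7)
My plan is to build a hierarchy of Lyapunov-type inequalities for the energy functionals appearing in Theorem \ref{Th1.2} and close each of them by interpolating against the uniformly bounded negative Sobolev norm $\|\Lambda^{-s}(f,E,B)\|\lesssim Y_0$ supplied by Theorem \ref{Th1.1}. This is the time-weighted energy method of Guo--Wang \cite{Guo-CPDE-12}, adapted here to a system whose macroscopic variables $(E,B,a_\pm,b,c)$ are not directly dissipated at the lowest order.

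For part (1), the starting point is a differential inequality
\[
\frac{d}{dt}\mathcal{E}^k_{N_0}(t)+\lambda\,\mathcal{D}^k_{N_0}(t)\le 0,\qquad 0\le k\le N_0-2,
\]
derived by applying $\partial^\alpha$ to (\ref{f-sgn}) for $k\le|\alpha|\le N_0$, pairing with $\partial^\alpha f$, combining with the macroscopic equations for $(a_\pm,b,c,E,B)$, and absorbing the nonlinear errors via the smallness from Theorem \ref{Th1.1}. Since $\mathcal{D}^k_{N_0}$ gains one spatial derivative over $\mathcal{E}^k_{N_0}$ at the lowest order $|\alpha|=k$ (cf.~(\ref{D_k})), the Fourier-side interpolation
\[
\|\nabla^k u\|^{2\frac{k+1+s}{k+s}}\lesssim \|\Lambda^{-s}u\|^{\frac{2}{k+s}}\|\nabla^{k+1}u\|^{2}
\]
applied to $u\in\{f,E,B\}$, together with the uniform bound $\|\Lambda^{-s}(f,E,B)\|\lesssim Y_0$, yields
\[
\bigl(\mathcal{E}^k_{N_0}\bigr)^{1+\frac{1}{k+s}}\lesssim Y_0^{\frac{2}{k+s}}\,\mathcal{D}^k_{N_0}.
\]
Feeding this back into the Lyapunov inequality and integrating the resulting ODE $y'+cy^{1+1/(k+s)}\le 0$ produces the polynomial rate $(1+t)^{-(k+s)}$ claimed in (\ref{TH2-1}).

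For part (2), the same scheme is applied to $\mathcal{E}^k_{N_0,l_0+i/2}$, but the time-velocity weight $w_\ell(t,v)$ creates the extra damping term $(1+t)^{-1-\vartheta}\|\langle v\rangle w_{\ell-|\beta|}\partial^\alpha_\beta\{\mathbf{I}-\mathbf{P}\}f\|^2$ in $\mathcal{D}^k_{N_0,l_0+i/2}$ (cf.~(\ref{D_l-k})), which converts decay in the velocity weight into a factor of $(1+t)$. A downward induction on $i$ combines the velocity-variable interpolation $|w_{l_0+(i-1)/2}g|^2\lesssim |\langle v\rangle w_{l_0+i/2}g|\cdot|g|_{2,w}$ with the already-established rate at step $i-1$, the loss of one power of $(1+t)$ per induction step matching the gap between $-(k+s)+i$ and $-(k+s)+(i-1)$; the special rate $(1+t)^{1-s}$ in (\ref{TH2-3}) arises by saturating the admissible range $\vartheta\le s/2-\tfrac12$ in the definition of $w_\ell$. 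For part (3), the range $|\alpha|\in[N_0+1,N-1]$ is beyond the reach of the negative-Sobolev interpolation, since at those orders only the top-order energy $\mathcal{E}_{N,l}$ is known to be uniformly bounded, not decaying; instead I would interpolate log-convexly between the rate $(1+t)^{-(N_0-2+s)}$ at order $N_0$ from part (1) and the uniform bound at order $N$ with interpolation exponent $(N-|\alpha|)/(N-N_0)$, which immediately gives (\ref{TH2-4}); the weighted estimate (\ref{TH2-5}) follows from the analogous interpolation on the weighted hierarchy of part (2), with the small $\epsilon_0$ loss arising from the $E\cdot v\,f$ nonlinearity when one exchanges the $E$-factor for a slight loss of velocity weight.

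The main obstacle is the derivation of the Lyapunov inequality itself for $\mathcal{E}^k_{N_0}$. The magnetic field $B$ enters the energy up to order $N_0$ but its dissipation only appears through $\|\nabla_x B\|_{H^{N_0-2}}$, so two derivatives are missing on $B$ and one on $E$ at the lowest order; the missing low-order dissipation on the macroscopic block $(E,B,a_\pm,b,c)$ must be recovered by a delicate bootstrap on the macroscopic system (using the fluid-type equations obtained by projecting (\ref{f-sgn}) onto $\mathcal{N}$, combined with the Maxwell equations and the compatibility conditions (\ref{compatibility conditions})). It is this step that dictates the restriction $k\le N_0-2$ in parts (1)--(2) and is the most technically involved portion of the argument.
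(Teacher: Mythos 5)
Your overall strategy (Lyapunov inequalities plus interpolation against the uniformly bounded $\|\Lambda^{-s}(f,E,B)\|$, then interpolation between orders $N_0$ and $N$ for part (3)) is the same as the paper's, and your part (3) is essentially identical to the published argument. However, the central step of your part (1) contains a genuine gap: the claimed inequality $\bigl(\mathcal{E}^k_{N_0}\bigr)^{1+\frac{1}{k+s}}\lesssim Y_0^{\frac{2}{k+s}}\mathcal{D}^k_{N_0}$ does not follow from the spatial interpolation alone, for two reasons. First, for soft potentials $\gamma+2<0$ the dissipation (\ref{D_k}) controls $f$ only through $\|\partial^\alpha\{{\bf I-P}\}f\|^2_\sigma$, and the $\sigma$-norm carries the degenerate weight $\langle v\rangle^{(\gamma+2)/2}$, so no unweighted quantity $\|\nabla^{j}f\|^2$ is dominated by $\mathcal{D}^k_{N_0}$; the paper must insert an additional velocity-weight interpolation $\|\nabla^m f\|\le\|\langle v\rangle^{\frac{\gamma+2}{2}}\nabla^m f\|^{\frac{k+s}{k+s+1}}\|\langle v\rangle^{\bar l}\nabla^m f\|^{\frac{1}{k+s+1}}$ and pay with the uniformly bounded weighted energy $\bar{\mathcal{E}}_{N_0,\frac{k+s}{2}}$. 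Second, because of the regularity loss of the Maxwell field, $\|\nabla^{N_0}(E,B)\|^2$ sits in $\mathcal{E}^k_{N_0}$ but is entirely absent from $\mathcal{D}^k_{N_0}$, so it cannot be recovered by interpolating downward toward $\Lambda^{-s}$; the paper interpolates \emph{upward}, $\|\nabla^{N_0}(E,B)\|\lesssim\|\nabla^{N_0-1}(E,B)\|^{\frac{k+s}{k+s+1}}\|\nabla^{N_0+k+s}(E,B)\|^{\frac{1}{k+s+1}}$, which is exactly where the hypothesis $N\ge N_0+k+s$ (i.e. $N\ge 2N_0-2+s$) enters. The corrected comparison reads $\mathcal{E}^k_{N_0}\lesssim(\mathcal{D}^k_{N_0})^{\frac{k+s}{k+s+1}}\bigl(\sup_{\tau\le t}\max\{\bar{\mathcal{E}}_{N_0,\frac{k+s}{2}},\mathcal{E}_{N_0+k+s}\}\bigr)^{\frac{1}{k+s+1}}$, not with $Y_0^2$ alone; boundedness of that sup is then supplied by the global-existence estimates.

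In part (2) there is a second omission: the weighted Lyapunov inequality is not homogeneous but has the source $\sum_{|\alpha|=N_0}\|\partial^\alpha E\|^2$ on the right-hand side (the top-order field cannot be absorbed into $\mathcal{D}^k_{N_0,\ell}$), and the whole time-weighted hierarchy in $i$ only closes because this source decays like $(1+t)^{-(N_0-2+s)}$ by part (1); requiring $(1+t)^{k+s+\epsilon}\|\nabla^{N_0}E\|^2$ to be integrable-in-spirit forces $k\le N_0-3$, which is precisely the reason the weighted decay stops one order earlier than the unweighted one. Your sketch does not account for this term, attributes the restriction to the macroscopic bootstrap instead, and explains (\ref{TH2-3}) through the choice of $\vartheta$, whereas in the paper it comes from adding one extra half level of velocity weight ($\ell=l_0+k/2+1$) to the hierarchy; similarly, the $\epsilon_0$ in (\ref{TH2-5}) comes from the a priori bound $(1+t)^{-\frac{1+\epsilon_0}{2}}\mathcal{E}_{N,l}\lesssim Y_0^2$ (the top weighted energy may grow), not from the $E\cdot vf$ weight exchange. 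These are fixable within your framework, but as written the scheme would not close.
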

\begin{remark} Some remarks related to Theorems \ref{Th1.1} and \ref{Th1.2} are given below:
\begin{itemize}
\item Theorem \ref{Th1.2} tells us that for $N_0\leq|\alpha|\leq N-1$, we can still get the time decay of $\|\partial^\alpha f\|^2$  and  $\|w_l\partial^\alpha f\|^2$. Compared with \cite{Duan-VML}, it is worth pointing out that we can obtain the time decay of the $k$-order energy functional $\mathcal{E}^k_{N_0,l_0}(t)$ defined in ($\ref{E_k}$) for the Vlasov-Maxwell-Landau system $(\ref{f})$.
\item In Theorem \ref{Th1.2}, it is worth pointing out that the largest value of the parameter $k$ appeared in $\mathcal{E}^k_{N_0}(t)$ is $N_0-2$ while the corresponding largest value of the parameter $k$ appeared in $\mathcal{E}^k_{N_0,\ell}(t)$ is $N_0-3$. The reason for such a difference is caused by the fact that the highest order $\|\partial^\alpha E\|^2$ appeared in (\ref{lemma3.11-1}) does not belong to the corresponding energy dissipation rate functional $\mathcal{D}^k_{N_0,\ell}(t)$.
\item In Theorems \ref{Th1.1} and \ref{Th1.2}, the parameter $s$ is assumed to satisfy $s\in[\frac12, \frac32)$, we note that by employing the argument used in \cite{Sohinger-Strain-2012}, if we replace the negative Sobolev norm $\|\cdot\|_{\dot{H}^{-s}}$ by the Besov norm $\|\cdot\|_{\dot{B}_{2,\infty}^{-s}}$ in Theorems \ref{Th1.1} and \ref{Th1.2}, we can also obtain the corresponding results similar to that of Theorem \ref{Th1.1} and  Theorem \ref{Th1.2} when $s\in[\frac12,\frac32]$.
\end{itemize}
\end{remark}

Before we outline our main ideas to deduce Theorems \ref{Th1.1} and \ref{Th1.2}, we first review some former results on the construction of global smooth solutions to the Vlasov-Maxwell-Landau system (\ref{VML}) near Maxwellians (For the corresponding results on the Boltzmann equation, the Landau equation, the Vlasov-Poisson-Landau system, the Vlasov-Poisson-Boltzmann system, and the Vlasov-Maxwell-Boltzmann equation based on the energy method developed in \cite{Guo-IUMJ-04,Liu-Yang-Yu,Liu-Yu}, interested readers are referred to  \cite{Duan-SIAM,Duan_Liu-Yang_Zhao-VMB-2013,DS-ARMA-11,DS-CPAM-11,DY-SIAM-10,DYZ-hard,DYZ-soft,DYZ-Landau,Guo-CMP-02,Guo-CPAM-02,Guo-IM-2003,Guo-JAMS-11,Guo-CPDE-12,Hsiao-Yu,Lions,Strain-CMP-2006,S-G-08,S-Z-12,Wang-12,Zhan-1994,Zhan-1994-TTSP} and the references cited therein):  Firstly, the case for $\gamma\geq -1$ can be treated  by employing the argument used in \cite{Guo-IM-2003} and \cite{Strain-CMP-2006}. In fact, although these two papers deal with
the Vlasov-Maxwell-Boltzmann system for only the hard-sphere model, the coercive estimate (\ref{coercive}) of the linearized Landau operator $L$ shows that if $\gamma\geq -1$, one has $\gamma+2\geq 1$ and consequently the linearized Landau operator $L$ has the stronger dissipative property than the corresponding linearized Boltzmann collision operator which can control those nonlinear terms with the velocity-growth rate $|v|$, typically occurring to $E\cdot vf$ suitably for hard sphere model to yield the desired global solvability result; Secondly, the relativistic version of the
Vlasov-Maxwell-Landau system is studied by Strain-Guo \cite{S-G-04}. For such a case, due to
the boundedness of the relativistic velocity and the special form of the relativistic
Maxwellian, the velocity growth phenomenon in the nonlinear term disappears,
and instead the more complex property of the relativistic collision operator was
analyzed there; Finally, by employing the time-velocity weight function $w_\ell(t,v)$ given in (\ref{weight}) which is first introduced in \cite{DYZ-hard, DYZ-soft} to treat the Vlasov-Poisson-Boltzmann system for non-hard sphere interactions and in \cite{DYZ-Landau} to deal with the Vlasov-Poisson-Landau system which includes the Coulomb potential, Duan \cite{Duan-VML}
can treat the whole range of $\gamma\geq-3$. Let's use more words to explain the main ideas used in \cite{Duan-VML}. As pointed out in \cite{Duan-VML}, the main mathematical difficulties to deuce the global solvability result to the Vlasov-Maxwell-Landau system (\ref{VML}) near Maxwellians are the following:
\begin{itemize}
\item The degeneration of dissipation at large velocity for the linearized Landau
operator $L$ with soft potentials;

\item The velocity-growth of the nonlinear term with the velocity-growth rate $|v|$;

\item The regularity-loss of the electromagnetic field.
\end{itemize}
To overcome the above mentioned difficulties, the main arguments used in \cite{Duan-VML} is
\begin{itemize}
\item A refined weighted energy method based on the time-velocity weight function $w_\ell(t,v)$ defined in (\ref{weight}) is introduced which use essentially the extra dissipative term generated by such a weight function which corresponds to the last term in the energy dissipation rate functional $\mathcal{\bar{D}}_{N,l}(t)$;

\item Motivated by the argument developed in \cite{Hosono-Kawashima-2006} to deduce the decay property of solutions to nonlinear equations of regularity-loss type, a time-weighted energy estimate is designed to close the analysis, which implies that although the $L^2-$norm of terms with the highest order derivative with respect to $x$ of the solutions of the Vlasov-Maxwell-Landau system may can only be bounded by some function of $t$ which increases as time evolves, the $L^2-$norm of terms with lower order derivatives with respect to $x$ still enjoy some decay rates.
\end{itemize}
Based on the above arguments and by combining the decay of solutions to the corresponding linearized system with the Duhamel principle, Duan \cite{Duan-VML} can indeed close the analysis provided that the regularity index imposed on the initial perturbation is 14, i.e. $N\geq 14$ and certain norms of the initial perturbation, especially $\|w_\ell f_0\|_{Z^1}$ and  $\|(E_0,B_0)\|_{L^1}$, are assumed to be sufficiently small.

Motivated by \cite{Guo-CPDE-12} and \cite{Wang-12}, the main purpose of our present manuscript is trying to study such a problem by a different method which does not rely on the decay of the corresponding linearized system and the Duhamel principle. Before outlining the main ideas, we first give the philosophy of our present study as follows:
\begin{itemize}
\item As mentioned above, the analysis in \cite{Duan-VML} asks that the minimal regularity index imposed on the initial perturbation is 14, i.e. $N\geq 14$ and certain norms of the initial perturbation, especially $\|w_\ell f_0\|_{Z^1}$ and  $\|(E_0,B_0)\|_{L^1}$, are assumed to be sufficiently small. Thus a natural question is whether these assumptions on the minimal regularity index of the initial data and on the smallness of the initial perturbation can be relaxed or not? In fact the main results obtained in this manuscript show that the smallness of $\|w_\ell f_0\|_{Z^1}$ and  $\|(E_0,B_0)\|_{L^1}$ can be replaced by the weaker assumption that  $\|(E_0,B_0)\|^2_{\dot{H}^{-s}}+\|f_0\|^2_{\dot{H}^{-s}}$ is small and as pointed out in Remark 1.1, the minimal regularity index is relaxed to $N=7$ for $s\in[\frac12,1]$ and $N=6$ for $s\in(1,\frac32)$.

\item Another motivation of our present study is to deduce the temporal decay estimates on global solutions of some kinetic equations such as the one-species Vlasov-Maxwell-Landau systems, one-species Vlasov-Maxwell-Boltzmann systems, etc.. For result in this direction, Duan studied the one-species Vlasov-Maxwell-Boltzmann system for hard sphere interaction in \cite{Duan-SIAM}. Although the global solvability result can be established there by employing the energy estimates, since the decay of the corresponding linearized systems is too slow, it seems impossible to combine the decay of the linearized systems with the Duhamel principle to yield the desired temporal decay of the global solutions obtained, cf. \cite{Duan-SIAM} for some discussions on this problem. We note, however, that our recent calculations show that the argument used in this manuscript can be adapted to treat the one-species Vlasov-Maxwell-Landau system for $\gamma>-3$ and the one-species Vlasov-Maxwell-Boltzmann system for $\gamma>-1$ and the global solvability results together with the temporal decay estimates can also be obtained, cf. \cite{Lei-Zhao-13}.

\end{itemize}

Now we sketch the main ideas to deduce our main results:
\begin{itemize}
\item The first point is to add $\|\Lambda^{-s}(f,E,B)\|^2$  and  $\|\Lambda^{1-s}(f,E,B)\|^2$  into the energy functional $\mathcal{\bar{E}}_{N,l}(t)$ defined by ($\ref{E}$) and the energy dissipation rate functional $\mathcal{\bar{D}}_{N,l}(t)$ defined by ($\ref{D}$) respectively. It is worth pointing out that it is to deduce the desired estimates on $\|\Lambda^{-s}(f,E,B)\|^2$  and  $\|\Lambda^{1-s}(f,E,B)\|^2$ that we need to ask that $\frac 12\leq s< \frac 32$. For details see the discussions before Lemma \ref{Lemma4.2};
\item Secondly,  as in \cite{Duan-VML}, our method is a refined weighted energy method which is based on the time-velocity exponential weight function $w_{\ell}(t,v)=\langle v\rangle^{-(\gamma+2)\ell}e^{\frac{q\langle v\rangle^2}{(1+t)^{\vartheta}}}$ given in (\ref{weight}) and the temporal decay rates of the electromagnetic field $(E,B)$ plays an essential role in our analysis. It is worth to emphasizing that the reason why one chooses the exponential factor $e^{\frac{q\langle v\rangle^2}{(1+t)^{\vartheta}}}$ in the weight function $w_\ell(t,v)$ is that it can generate extra dissipation term which corresponds to the last term in the energy dissipation rate functional $\mathcal{\bar{D}}_{N,l}(t)$ defined by \eqref{D}. Unlike \cite{Duan-VML}, we use the interpolation technique to deduce the decay of the solutions, especially on the electromagnetic field $(E,B)$. To relax the assumption on the minimal regularity index imposed on the initial data, our main observation is that the energy functional $\mathcal{E}^k_{N_0}(t)$ decays faster as $k$ increases. To deduce such a result, we have by the interpolation argument that
\begin{equation}\label{1.26}
 \frac{d}{dt}\mathcal{E}_{N_0}^{k}(t)+\mathcal{D}_{N_0}^{k}(t)\leq 0,\quad\quad \frac{d}{dt}\mathcal{E}^k_{N_0,\ell}(t)+\mathcal{D}^k_{N_0,\ell}(t)\lesssim \sum_{|\alpha|=N_0}\|\partial^\alpha E\|^2,
\end{equation}
 so that we can deduce the time decay of $\mathcal{E}^k_{N_0}(t)$ which can further yield the time decay of $\mathcal{E}^k_{N_0,l_0}(t)$. Based on the time decay of $\mathcal{E}^k_{N_0}(t)$ and $\mathcal{E}^1_{N_0,l_0}(t)$, we can then close the a priori assumption given in (\ref{E-priori}) and then the global solvability result follows immediately from the continuation argument. Here it is worth pointing out that although the energy energy functional $\mathcal{E}^k_{N_0}(t)$ contains only the spatial derivatives, the coercive estimate (\ref{coercive}) of the linearized Landau operator $L$ can indeed be used to control the corresponding nonlinear terms related to the Lorenz force to yield the desired estimates listed in (\ref{1.26}).
\end{itemize}

The rest of this paper is organized as follows. In Section 2, we list some basic lemmas which will be used later. Section 3 is devoted to deducing the desired energy estimates and the proofs of Theorem \ref{Th1.1} and Theorem \ref{Th1.2} will be given in Section 4.

\section{Preliminary}
\setcounter{equation}{0}

In this section, we collect several fundamental results to be used frequently later. The first lemma concerns the weighted coercivity estimate on the linearized collision operators $L$ and the weighted estimates on the nonlinear collision operator $\Gamma$ with respect to the weight $w_l(t,v)$ given in \eqref{weight} whose proofs can be found in \cite{Wang-12}.
\begin{lemma}\label{Lemma2.1} It holds that
\begin{itemize}
\item[(i).] (\cite{Wang-12}) Let $w_{l}$ be given in $(\ref{weight})$, there exist positive constants $\eta>0$ and $C_\eta>0$ such that
\begin{equation}\label{L_0}
  \left\langle w_{l}^2(t, v){ L}f,f\right\rangle\geq \eta|f|_{\sigma,w_{l}}^2-C_{\eta}\left|\chi_{\{| v|\leq2C_{\eta}\}}f\right|^2.
\end{equation}
Here $\chi_{\{| v|\leq2C_{\eta}\}}$ denotes the characteristic function of the set $\{v=( v_1,  v_2,  v_3) \in {\mathbb{R}}^3: | v|\leq2C_{\eta}\}$. Moreover, for the case of $|\beta|>0$, we have for any small $\kappa>0$ that there exists a positive constant $C_{\eta,\kappa}>0$ such that
\begin{equation}\label{L_v}
  \left\langle w_{l}^2(t, v)\partial_{\beta}{L}f,\partial_{\beta}f\right\rangle\geq \eta\left|\partial_{\beta}f\right|_{\sigma,w_{l}}^2
  -\kappa \sum_{|\beta'|=|\beta|}\left|\partial_{\beta'}f\right|_{\sigma,w_{l}}^2
  -C_{\eta,\kappa}\sum_{|\beta'|<|\beta|}\left|\partial_{\beta'}f\right|_{\sigma,w_{l}}^2.
\end{equation}
\item[(ii).](\cite{Wang-12}) Let $w_{l}$ be given in $(\ref{weight})$, then we have
\begin{equation}\label{Gamma-w}
  \left\langle w_{l}^2(t, v)\partial^{\alpha}_{\beta}\Gamma(g_1,g_2),\partial^{\alpha}_{\beta}g_3\right\rangle
  \lesssim \sum_{\alpha_1\leq \alpha\atop \bar{\beta}\leq\beta_1\leq\beta}
 \left|\mu^{\delta}\partial^{\alpha_1}_{\bar{\beta}}g_1\right|_2
  \left|\partial^{\alpha-\alpha_1}_{\beta-\beta_1}g_2\right|_{\sigma,w_{l}}
  \left(\left|\partial^{\alpha}_{\beta}g_3\right|_{\sigma,w_{l}}
  +l\left|\partial^{\alpha}_{\beta}g_3\right|_{2,\frac{w_{l}}{\langle v\rangle^{-\gamma/2}}}\right).
\end{equation}
Here $\delta>0$ is a sufficiently small universal constant. In particular, we have
\begin{equation}\label{Gamma-0}
  \left\langle\Gamma(g_1,g_2),g_3\right\rangle
  \lesssim \left|\mu^{\delta}g_1\right|_2\left|g_2\right|_{\sigma}\left|g_3\right|_{\sigma}.
\end{equation}
\end{itemize}
\end{lemma}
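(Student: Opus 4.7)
The plan is to prove both parts by decomposing the Landau operator into its principal divergence-form part and a compact remainder, then tracking how the weight $w_\ell$ interacts with integration by parts in $v$. Since $w_\ell$ factors as a polynomial piece $\langle v\rangle^{-(\gamma+2)\ell}$ times the exponential $e^{q\langle v\rangle^2/(1+t)^\vartheta}$ with $0<q\ll 1$, the exponential factor dominates in the sense that any polynomial growth coming from differentiating $\langle v\rangle^{-(\gamma+2)\ell}$ is controlled once $q$ is chosen small.

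For part (i), I would start by writing $Lf = -\partial_i(\sigma^{ij}\partial_j f) + (\text{lower order in }\nabla_v)\cdot f - Kf$, where the pointwise matrix bounds on $\sigma^{ij}(v)$ are exactly those that produce the $|\cdot|_\sigma$ norm defined in \eqref{norm}. Pairing $w_\ell^2 Lf$ with $f$ and integrating by parts in $v$ produces the principal quadratic form $\int\sigma^{ij}(\partial_i f)(\partial_j f)w_\ell^2\,dv$, which delivers the weighted coercivity $\eta|f|_{\sigma,w_\ell}^2$. The commutator arising from the weight, namely $2\int\sigma^{ij}(\partial_j f)f\,w_\ell\partial_i w_\ell\,dv$, is handled by Cauchy--Schwarz: one factor is absorbed into $\eta|f|_{\sigma,w_\ell}^2$, and the remaining factor is controlled using that $|\partial_i w_\ell|\lesssim(\ell+q\langle v\rangle/(1+t)^\vartheta)w_\ell$, together with smallness of $q$. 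The compact part $K$ contributes a term localized to $\{|v|\le 2C_\eta\}$, whose complement can be absorbed by the good term thanks to the Gaussian decay built into $K$; this produces the truncation $C_\eta|\chi_{\{|v|\le 2C_\eta\}}f|^2$ and yields \eqref{L_0}. For $|\beta|>0$, I apply $\partial_\beta$ to $Lf$ and commute: $\partial_\beta Lf = L\partial_\beta f + [\partial_\beta,L]f$, where $[\partial_\beta,L]f$ consists of expressions involving $\partial_{\beta''}\sigma^{ij}\cdot\partial_{\beta'}f$ with $|\beta'|\le|\beta|$. Pairing with $w_\ell^2\partial_\beta f$, the principal piece $\langle w_\ell^2 L\partial_\beta f,\partial_\beta f\rangle$ gives the weighted coercivity from the first step, while commutator terms with $|\beta'|=|\beta|$ are absorbed with an arbitrary small $\kappa$ and those with $|\beta'|<|\beta|$ are accumulated into $C_{\eta,\kappa}$, yielding \eqref{L_v}.

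For part (ii), the strategy is to expand
\[
\partial^\alpha_\beta\Gamma(g_1,g_2) = \mu^{-1/2}\partial^\alpha_\beta\bigl[Q(\mu^{1/2}g_1,\mu^{1/2}g_2)\bigr]
\]
by Leibniz, moving $\mu^{-1/2}$ inside and grouping Gaussian factors. Writing $Q$ in divergence form and pairing with $w_\ell^2\partial^\alpha_\beta g_3$, one integration by parts in $v$ turns the outer $\partial_i$ onto $w_\ell^2\partial^\alpha_\beta g_3$. This yields two contributions: the derivative on $\partial^\alpha_\beta g_3$ produces the factor $|\partial^\alpha_\beta g_3|_{\sigma,w_\ell}$, while the derivative on $w_\ell^2$ produces a factor of order $\ell\langle v\rangle^{-1}w_\ell^2$, giving after Cauchy--Schwarz the extra term $\ell|\partial^\alpha_\beta g_3|_{2,w_\ell/\langle v\rangle^{-\gamma/2}}$. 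To separate $g_1$ and $g_2$, I would use the classical trick of splitting $\mu^{1/2}(v')\mu^{1/2}(v) = \mu^{1/4+\delta}(v')\mu^{1/4+\delta}(v)\cdot\mu^{1/4-\delta}(v')\mu^{1/4-\delta}(v)$ and applying Cauchy--Schwarz in $(v,v')$ so that the $g_1$ argument absorbs the Gaussian weight $\mu^\delta$ and appears in a plain $L^2_v$ norm, while the $g_2$ argument retains the $\sigma,w_\ell$ norm; the sum over $\alpha_1\le\alpha$ and $\bar\beta\le\beta_1\le\beta$ then records how many derivatives fall on $g_1$ versus $g_2$. The special case \eqref{Gamma-0} is the instance $\alpha=\beta=0$ with no weight.

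The main obstacle is the clean bookkeeping in part (ii): tracking which factor carries the pointwise Gaussian weight (hence appears in the unweighted $L^2_v$ norm with $\mu^\delta$) and which carries the degenerate $\sigma,w_\ell$ norm, while simultaneously ensuring that the boundary term generated by $\partial_i w_\ell^2$ produces precisely the anisotropic weight $w_\ell/\langle v\rangle^{-\gamma/2}$ and no worse. Once the scaling of $|\sigma^{ij}(v)|\sim\langle v\rangle^{\gamma+2}$ (tangential) versus $\langle v\rangle^\gamma$ (radial) is matched against the three-term expression \eqref{norm} defining $|\cdot|_\sigma$, the estimate \eqref{Gamma-w} falls out; the full argument for both parts is carried out in \cite{Wang-12} in essentially this form.
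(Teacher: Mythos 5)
The paper offers no proof of this lemma: it is quoted verbatim from Wang \cite{Wang-12}, exactly as you note at the end of your proposal. Your sketch — splitting $L$ into its divergence-form principal part plus a compact remainder, integrating by parts against $w_{l}^2$, using the radial/tangential eigenvalue structure of $\sigma^{ij}$ and the smallness of $q$ to absorb the weight commutators, and the Leibniz/Cauchy--Schwarz bookkeeping for $\Gamma$ — is the standard argument carried out in that reference, so it is consistent with the paper's treatment.
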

In what follows, we will collect some inequalities which will be used throughout the rest of this manuscript. The first one is the Sobolev interpolation inequalities.
\begin{lemma}\label{lemma2.2}(\cite{Wang-12})
Let $2\leq p<\infty$ and $k,\ell, m\in\mathbb{R}$, we have
\begin{equation}
\left\|\nabla^k f\right\|_{L^p}\lesssim\left\|\nabla^\ell f\right\|^{\theta}\left\|\nabla^m f\right\|^{1-\theta},
\end{equation}
where $0\leq \theta\leq1$ and $\ell$ satisfy
\begin{equation}
\frac{1}{p}-\frac k3=\left(\frac12-\frac\ell3\right)\theta+\left(\frac12-\frac m3\right)(1-\theta).
\end{equation}
For the case $p=+\infty$, we have
\begin{equation}
\left\|\nabla^k f\right\|_{L^\infty}\lesssim\left\|\nabla^\ell f\right\|^{\theta}\left\|\nabla^m f\right\|^{1-\theta},
\end{equation}
where $0\leq \theta\leq1$ and $\ell$ satisfy
\begin{equation}
-\frac k3=\left(\frac12-\frac\ell3\right)\theta+\left(\frac12-\frac m3\right)(1-\theta).
\end{equation}
Here we need to assume that $\ell\leq k+1$, $m\geq k+2$. Throughout the rest of this manuscript, for each positive integer $k$,
$\left\|\nabla^k f\right\|\sim\sum\limits_{|\alpha|=k}\left\|\partial^\alpha f\right\|.$
\end{lemma}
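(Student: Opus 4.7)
The plan is to reduce the Gagliardo–Nirenberg inequality stated in Lemma \ref{lemma2.2} to pure $L^2$ interpolation on the Fourier side, combined with a homogeneous Sobolev embedding when $p<\infty$ and a Littlewood–Paley argument when $p=\infty$. The indices $k,\ell,m$ are interpreted via the Fourier multiplier $|\xi|^k$ so that $\|\nabla^k f\|=\||\xi|^k\hat f\|_{L^2_\xi}$, and one works in the homogeneous Sobolev spaces $\dot H^\ell\cap\dot H^m$.

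First I would establish the baseline $L^2$ interpolation: if $\sigma=\theta\ell+(1-\theta)m$ for some $\theta\in[0,1]$, then
\begin{equation*}
\|\nabla^\sigma f\|\le\|\nabla^\ell f\|^{\theta}\|\nabla^m f\|^{1-\theta}.
\end{equation*}
This is immediate from Plancherel together with Hölder's inequality applied to the pointwise identity $|\xi|^{2\sigma}=(|\xi|^{2\ell})^\theta(|\xi|^{2m})^{1-\theta}$. Next, for the range $2\le p<\infty$, I would set $\sigma=k+\tfrac32-\tfrac3p$; a short computation shows that the scaling hypothesis on $(p,k,\ell,m,\theta)$ is exactly $\sigma=\theta\ell+(1-\theta)m$. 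The homogeneous Sobolev embedding $\dot H^{\sigma}(\mathbb{R}^3)\hookrightarrow L^p(\mathbb{R}^3)$ (valid because $\sigma-k=\tfrac32-\tfrac3p\in[0,\tfrac32)$) then gives $\|\nabla^k f\|_{L^p}\lesssim\|\nabla^\sigma f\|_{L^2}$, and composing with the $L^2$ interpolation closes the case.

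For the endpoint $p=\infty$, the critical embedding $\dot H^{3/2}\hookrightarrow L^\infty$ fails, so I would use a Littlewood–Paley decomposition $f=\sum_j \Delta_j f$. Bernstein's inequality yields
\begin{equation*}
\|\nabla^k f\|_{L^\infty}\lesssim\sum_{j\in\mathbb{Z}}2^{j(k+3/2)}\|\Delta_j f\|_{L^2}.
\end{equation*}
Splitting the sum at a dyadic cutoff $2^{j_0}$ and bounding $\|\Delta_j f\|_{L^2}\lesssim 2^{-j\ell}\|\nabla^\ell f\|$ in low frequencies and $\|\Delta_j f\|_{L^2}\lesssim 2^{-jm}\|\nabla^m f\|$ in high frequencies, the two resulting geometric series converge precisely because the hypotheses $\ell\le k+1$ and $m\ge k+2$ force $\ell<k+\tfrac32<m$. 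Optimizing the cutoff $j_0$ in the bound $\lesssim 2^{j_0(k+3/2-\ell)}\|\nabla^\ell f\|+2^{-j_0(m-k-3/2)}\|\nabla^m f\|$ produces the exponent $\theta$ required by the scaling relation.

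The main technical obstacle I anticipate is the $p=\infty$ endpoint: the loss of critical embedding forces the Littlewood–Paley splitting, and one must track carefully that the two geometric series converge strictly (hence the gap conditions $\ell\le k+1$ and $m\ge k+2$ rather than merely $\ell<k+\tfrac32<m$, which would be the minimal scaling-admissible requirement). A secondary point is ensuring that the argument is meaningful for fractional and negative indices; this is handled uniformly by defining all derivatives through $|\xi|^k$ on the Fourier side, at the cost of demanding that $f$ lie in $\dot H^\ell\cap\dot H^m$ so that both sides of the inequality are finite.
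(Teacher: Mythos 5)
The paper itself offers no proof of this lemma: it is quoted verbatim from \cite{Wang-12} (where it in turn traces back to the Sobolev-interpolation lemmas of \cite{Guo-CPDE-12}), so your task here is really to supply a proof the authors omit, and your argument does that correctly. The case $2\le p<\infty$ is exactly the standard route: Plancherel plus H\"older on the Fourier side gives $\|\Lambda^\sigma f\|\le\|\Lambda^\ell f\|^\theta\|\Lambda^m f\|^{1-\theta}$ for $\sigma=\theta\ell+(1-\theta)m$, your identification $\sigma=k+\tfrac32-\tfrac3p$ matches the stated scaling relation, and the homogeneous embedding $\dot H^{3/2-3/p}\hookrightarrow L^p$ in $\mathbb{R}^3$ is legitimate precisely for $2\le p<\infty$. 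The endpoint $p=\infty$ via Littlewood--Paley and Bernstein is also sound: the two geometric series need $\ell<k+\tfrac32<m$, and the hypotheses $\ell\le k+1$, $m\ge k+2$ give a gap of at least $\tfrac12$ on each side, which moreover keeps the implicit constant uniform in $(k,\ell,m)$ rather than blowing up as the exponents approach the critical value $k+\tfrac32$ --- a point you correctly flag; optimizing the cutoff reproduces $\theta=\frac{m-k-3/2}{m-\ell}$, which is what the stated relation demands. Two small caveats worth one line each if you write this up: the dyadic cutoff $j_0$ is an integer, so the optimization is only up to a bounded factor (harmless), and in homogeneous spaces $f=\sum_j\Delta_jf$ only modulo polynomials, so one should either work with Schwartz functions and pass to the limit or note that the relevant norms annihilate no nontrivial polynomial when both $\|\Lambda^\ell f\|$ and $\|\Lambda^m f\|$ are finite. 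With those remarks your proposal is a complete and essentially standard proof of the cited lemma.
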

For the $L^q-L^q$ type estimate on $\Lambda^{-s}f$, we have
\begin{lemma}\label{lemma2.3}(\cite{Stein-1970})
Let $0<s<3$, $1<p<q<\infty$, $\frac1q+\frac s3=\frac1p$, then we have
\begin{equation}
\|\Lambda^{-s}f\|_{L^q}\lesssim\|f\|_{L^p}.
\end{equation}
\end{lemma}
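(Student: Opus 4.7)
The plan is to identify $\Lambda^{-s}$ with the Riesz potential of order $s$ and then invoke the classical Hardy--Littlewood--Sobolev estimate. By the Fourier definition in the excerpt, $(\Lambda^{-s} f)(x) = \mathcal{F}^{-1}[|\xi|^{-s} \hat f(\xi)](x)$. The Fourier transform identity $\mathcal{F}^{-1}[|\xi|^{-s}] = c_s |x|^{-(3-s)}$ as tempered distributions (valid for $0<s<3$, with $c_s = \Gamma((3-s)/2)/(2^s \pi^{3/2}\Gamma(s/2))$) then gives the integral representation
\begin{equation*}
(\Lambda^{-s} f)(x) = c_s \int_{\mathbb{R}^3} \frac{f(y)}{|x-y|^{3-s}}\, dy.
\end{equation*}
So the lemma reduces to the boundedness from $L^p$ to $L^q$ of convolution with the kernel $K(x) = |x|^{-(3-s)}$.

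Next I would observe that $K$ belongs to the weak space $L^{r,\infty}$ with $r = 3/(3-s)$, since the distribution function satisfies $|\{x : K(x) > \lambda\}| = \tfrac{4\pi}{3}\, \lambda^{-3/(3-s)}$. Applying the generalized Young inequality for Lorentz spaces, one obtains $\|K\ast f\|_{L^q} \lesssim \|K\|_{L^{r,\infty}} \|f\|_{L^p}$ whenever $1<p,q<\infty$ and $\tfrac{1}{q}+1 = \tfrac{1}{p}+\tfrac{1}{r}$; substituting $r=3/(3-s)$ reproduces exactly the scaling relation $\tfrac{1}{p} = \tfrac{1}{q} + \tfrac{s}{3}$ assumed in the statement.

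If one prefers to avoid invoking Young's inequality in Lorentz spaces as a black box, the standard alternative is the split-kernel argument: write $K = K\chi_{|x|\le R} + K\chi_{|x|>R}$ for a parameter $R=R(x)$ to be optimized. The convolution of $f$ with the truncated part is dominated pointwise by $C R^s (Mf)(x)$, where $M$ is the Hardy--Littlewood maximal operator, while the tail is controlled by H\"older's inequality by $C R^{s-3/p}\|f\|_{L^p}$. Choosing $R$ to balance these terms yields the pointwise bound $|\Lambda^{-s}f(x)| \lesssim (Mf)(x)^{p/q} \|f\|_{L^p}^{1-p/q}$, and taking $L^q$ norms together with the $L^p$-boundedness of $M$ finishes the proof.

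The main delicacy is the Marcinkiewicz-type upgrade from the pointwise maximal bound to the sharp strong-type estimate, which is the only nontrivial analytic input and which fails precisely at the endpoints $p=1$ and $q=\infty$; this is exactly why the hypotheses require $1<p<q<\infty$. Since $L^p$-boundedness of $M$ for $p>1$ is classical, no further machinery is needed and the lemma follows.
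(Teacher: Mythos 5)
Your proposal is correct: identifying $\Lambda^{-s}$ with the Riesz potential with kernel $c_s|x|^{-(3-s)}$ and then running either the weak-type Young inequality or the kernel-splitting/maximal-function argument is exactly the classical Hardy--Littlewood--Sobolev proof, and your exponent bookkeeping ($r=3/(3-s)$, the balance $R^{3/p}=\|f\|_{L^p}/Mf$, and the resulting exponent $p/q$) checks out. The paper itself offers no proof of this lemma --- it is simply quoted from Stein's book --- so your argument is essentially a reproduction of the proof in the cited reference; the only cosmetic remark is that once you have the pointwise bound $|\Lambda^{-s}f|\lesssim (Mf)^{p/q}\|f\|_{L^p}^{1-p/q}$, no Marcinkiewicz interpolation is needed, since the $L^p$-boundedness of $M$ for $p>1$ already yields the strong-type estimate directly.
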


Based on Lemma \ref{lemma2.2} and Lemma \ref{lemma2.3}, we have the following corollary which will be used frequently later
\begin{corollary}\label{corrollary}
Let $0<s<\frac 32$, then for any positive integer $k$ and any nonnegative integer $j$ satisfying $0\leq j\leq k$, we have
$$
\|f\|_{L^{\frac{12}{3+2s}}_x}\lesssim \left\|\Lambda^{\frac34-\frac s2}f\right\|,\quad \|f\|_{L^{\frac 3s}_x}\leq \left\|\Lambda^{\frac32-s}f\right\|,
$$
$$
\|f\|_{L_x^\infty}\lesssim\left\|\Lambda^{-s}f\right\|^{\frac{2k-1}{2(k+1+s)}}
\left\|\nabla^{k+1}f\right\|^{\frac{3+2s}{2(k+1+s)}},
$$
$$
\left\|\nabla^j f\right\|_{L^6_x}\lesssim \left\|\Lambda^{-s}f\right\|^{\frac{k-j}{k+1+s}} \left\|\nabla^{k+1}f\right\|^{\frac{j+s+1}{k+1+s}},
$$
and
$$
\left\|\nabla^jf\right\|_{L^3_x}\lesssim \left\|\Lambda^{-s}f\right\|^{\frac{2k-2j+1}{2k+2+2s}} \left\|\nabla^{k+1}f\right\|^{\frac{2j+2s+1}{2k+2+2s}}.
$$
\end{corollary}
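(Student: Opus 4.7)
The plan is to dispose of the five inequalities by scaling analysis and two separate tools: the Hardy--Littlewood--Sobolev inequality (Lemma \ref{lemma2.3}) for the first two, and a fractional Gagliardo--Nirenberg interpolation (an extension of Lemma \ref{lemma2.2}) for the last three.

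For the first two inequalities, I would write $f=\Lambda^{-\sigma}(\Lambda^\sigma f)$ and apply Lemma \ref{lemma2.3} with $p=2$ and the scaling constraint $\frac{1}{q}+\frac{\sigma}{3}=\frac{1}{2}$. Solving this relation gives $\sigma=3/4-s/2$ when $q=12/(3+2s)$ and $\sigma=3/2-s$ when $q=3/s$, matching the two claimed bounds.

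For the remaining three inequalities, the idea is to interpolate $\|\nabla^j f\|_{L^p}$ between the negative Sobolev norm $\|\Lambda^{-s}f\|$ and the higher-derivative norm $\|\nabla^{k+1}f\|$. The cleanest route is a Littlewood--Paley decomposition: for any $R>0$, split $f=f_{\le R}+f_{>R}$ via a Fourier cut-off at scale $|\xi|=R$. Cauchy--Schwarz on the Fourier side (combined with Hausdorff--Young for $p=\infty$ and with the Sobolev embeddings $\dot{H}^{1/2}\hookrightarrow L^3$ and $\dot{H}^{1}\hookrightarrow L^6$ for $p=3,6$) produces
\begin{equation*}
\|\nabla^j f_{\le R}\|_{L^p}\lesssim R^{\sigma_1}\|\Lambda^{-s}f\|, \qquad \sigma_1=j+s+\tfrac{3}{2}-\tfrac{3}{p},
\end{equation*}
with the low-frequency integrability requiring precisely $s<3/2$; the symmetric computation on the high-frequency part (using $k\ge 1$, so that $2(k+1)>3$) yields
\begin{equation*}
\|\nabla^j f_{>R}\|_{L^p}\lesssim R^{-\sigma_2}\|\nabla^{k+1}f\|, \qquad \sigma_2=(k+1)-j-\tfrac{3}{2}+\tfrac{3}{p}.
\end{equation*}
Optimizing in $R$ by setting $R^{\sigma_1+\sigma_2}=\|\nabla^{k+1}f\|/\|\Lambda^{-s}f\|$ yields an interpolation with exponent $\theta=\sigma_2/(\sigma_1+\sigma_2)=\sigma_2/(k+1+s)$. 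Specializing $(p,j)\in\{(\infty,0),(6,j),(3,j)\}$ produces $\theta$ equal to $\frac{2k-1}{2(k+1+s)}$, $\frac{k-j}{k+1+s}$, and $\frac{2k-2j+1}{2k+2+2s}$ respectively, which are exactly the exponents appearing in the three claimed estimates.

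The main obstacle (such as it is) is the hypothesis $s<3/2$: this is exactly what makes $|\xi|^{2s}$ locally integrable near the origin, and hence what makes the low-frequency estimate converge. Once that is in place, verifying the scaling identity for each of the three $(p,j)$ specializations is a routine arithmetic check, and no new machinery beyond Lemmas \ref{lemma2.2}--\ref{lemma2.3} is required.
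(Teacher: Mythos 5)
Your argument is correct and produces all five estimates with the right exponents. For the first two inequalities you do exactly what the paper intends: write $f=\Lambda^{-\sigma}\left(\Lambda^{\sigma}f\right)$ and apply Lemma \ref{lemma2.3} with $p=2$, which gives $\sigma=\frac34-\frac s2$ and $\sigma=\frac32-s$. For the last three, the paper's route is simply to invoke Lemma \ref{lemma2.2} with $\ell=-s$ (read as $\Lambda^{-s}$) and $m=k+1$ and solve the scaling relation for $\theta$, which yields $\frac{2k-1}{2(k+1+s)}$, $\frac{k-j}{k+1+s}$ and $\frac{2k-2j+1}{2k+2+2s}$ at once; you instead re-prove this fractional Gagliardo--Nirenberg interpolation from scratch by a low/high frequency splitting and optimization in $R$. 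Your version is self-contained (it is essentially a proof of Lemma \ref{lemma2.2} in the cases needed) and makes the mechanism, including where $k\ge1$ is used for $p=\infty$, transparent, at the cost of redoing work the paper has already packaged into the quoted lemma. Two small corrections to your commentary: the hypothesis $s<\frac32$ is not what makes the low-frequency piece converge --- $\int_{|\xi|\le R}|\xi|^{2(j+s)}d\xi$ is finite for every $s>0$, and in fact the last three inequalities do not require $s<\frac32$ at all; the restriction is really used in the first two bounds, so that $\frac32-s>0$ and $\frac3s>2$ make the exponents in Lemma \ref{lemma2.3} admissible, the relevant local integrability being that of $|\xi|^{-2s}$, not $|\xi|^{2s}$. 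Also, when $p=6$ and $j=k$ your $\sigma_2$ vanishes and the optimization degenerates; there the claimed bound is just the embedding $\dot H^1\hookrightarrow L^6$ (let $R\to0$ in your splitting), so this is only a presentational, not a mathematical, gap.
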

In many places, we will use Minkowski's integral inequality to interchange the orders of integration over $x$ and $v$.
\begin{lemma}\label{lemma2.4}(\cite{Guo-CPDE-12})
For $1\leq p\leq q\leq \infty$, we have
\begin{equation}
\|f\|_{L^q_xL^p_v}\leq \|f\|_{L^p_vL^q_x}.
\end{equation}
\end{lemma}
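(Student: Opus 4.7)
The plan is to prove the inequality by a duality argument applied to the outer $L^q_x$ norm. Since $p \leq q$, I set $r = q/p \geq 1$ with H\"older conjugate exponent $r'$, and observe that
\[
\|f\|_{L^q_x L^p_v}^p = \left\| \, \|f(\cdot,v)\|_{L^p_v}^p \, \right\|_{L^r_x} = \sup \int_{\mathbb{R}^3_x} g(x) \int_{\mathbb{R}^3_v} |f(x,v)|^p \, dv \, dx,
\]
where the supremum ranges over all nonnegative measurable $g$ on $\mathbb{R}^3_x$ with $\|g\|_{L^{r'}_x} \leq 1$. This reduces the two-sided inequality between mixed norms to a single-scalar duality that can be manipulated by Fubini's theorem.

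Next I would invoke Fubini's theorem, which applies since the integrand is nonnegative and both underlying measures are $\sigma$-finite, to exchange the orders of integration, and then apply H\"older's inequality in the $x$-variable pointwise in $v$:
\[
\int_{\mathbb{R}^3_x} g(x)\, |f(x,v)|^p \, dx \;\leq\; \|g\|_{L^{r'}_x} \cdot \bigl\| |f(\cdot,v)|^p \bigr\|_{L^r_x} \;\leq\; \|f(\cdot,v)\|_{L^q_x}^p.
\]
Integrating this bound in $v$ and taking the supremum over admissible $g$ yields
\[
\|f\|_{L^q_x L^p_v}^p \;\leq\; \int_{\mathbb{R}^3_v} \|f(\cdot,v)\|_{L^q_x}^p \, dv \;=\; \|f\|_{L^p_v L^q_x}^p,
\]
and taking the $p$-th root gives the desired estimate for $1 \leq p < q < \infty$.

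The boundary cases are handled separately by trivial arguments: $p = q$ is just Fubini's theorem, $q = \infty$ follows from the elementary inequality $\sup_{x}\int_{v} \leq \int_{v}\sup_{x}$ applied to $|f|^p$, and $p = \infty$ forces $q = \infty$ so the claim is vacuous. The only mild technicalities are justifying the duality characterization of the $L^r_x$ norm and the Fubini interchange, both of which are standard for $\sigma$-finite measures on Euclidean space. As this is a classical textbook inequality reproduced from \cite{Guo-CPDE-12}, no essential obstacle is anticipated; the sole content is the observation that the condition $p \leq q$ is precisely what allows the exponent $r = q/p$ to lie in $[1,\infty]$ so that H\"older is available.
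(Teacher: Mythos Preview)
Your argument is correct: this is the standard duality proof of Minkowski's integral inequality for mixed Lebesgue norms, and all steps (the identification $\|f\|_{L^q_xL^p_v}^p=\bigl\|\,\|f\|_{L^p_v}^p\bigr\|_{L^{q/p}_x}$, the duality representation, Fubini, and H\"older in $x$) are valid under the stated hypotheses, with the endpoint cases handled appropriately.

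There is nothing to compare against here: the paper does not supply its own proof of this lemma but simply records it as a known fact with a citation to \cite{Guo-CPDE-12}. Your write-up therefore goes beyond what the paper does by actually furnishing the (classical) argument.
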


Before concluding this section, we list below some identities concerning the the macroscopic part of $f(t,x,v)$.
To this end, for any solution $f(t,x,v)$ of the VML system (\ref{f}), by applying the macro-micro decomposition $(\ref{macro-micro})$ introduced in \cite{Guo-IUMJ-04} and by defining moment functions $A_{mj}(f)$ and $B_j(f),~1\leq m,j\leq3,$ by
\begin{equation*}
A_{mj}(f)=\int_{{\mathbb{R}}^3}\left( v_m v_j-1\right)\mu^{1/2}fd v,\quad B_j(f)=\frac{1}{10}\int_{{\mathbb{R}}^3}\left(| v|^2-5\right) v_j\mu^{1/2}fd v
\end{equation*}
as in \cite{DS-ARMA-11}, one can then derive from $(\ref{f})$ a fluid-type system of equations
\begin{equation}\label{Macro-equation}
\begin{cases}
\partial_ta_\pm+\nabla_x\cdot b+\nabla_x\cdot
\left\langle v\mu^{1/2},\{{\bf I_\pm-P_\pm}\}f\right\rangle=\left\langle \mu^{1/2}, g_\pm\right\rangle,\\
\partial_t\left(b_i+\left\langle v_i\mu^{1/2},\{{\bf I_\pm-P_\pm}\}f\right\rangle\right)+\partial_i\left(a_\pm+2c\right)\mp E_i\\
\quad\quad+\nabla_x\cdot\left\langle vv_i\mu^{1/2}, \{{\bf I_\pm-P_\pm}\}f \right\rangle=\left\langle v_i\mu^{1/2}, g_\pm+L_\pm f\right\rangle,\\
\partial_t\left(c+\frac16\left\langle (|v|^2-3)\mu^{1/2},\{{\bf I_\pm-P_\pm}\}f\right\rangle\right)+\frac13\nabla_x\cdot b\\
\quad\quad+\frac16\nabla_x\left\langle (|v|^2-3)v\mu^{1/2},\{{\bf I_\pm-P_\pm}\}f\right\rangle=\left\langle(|v|^2-3)\mu^{1/2},g_\pm-L_\pm f \right\rangle,
\end{cases}
\end{equation}
and
\begin{equation}\label{Micro-equation}
\begin{cases}
\partial_t[A_{ii}(\{{\bf I_\pm-P_\pm}\}f)+2c]+2\partial_ib_i
=A_{ii}(r_\pm+g_\pm),\\
\partial_tA_{ij}(\{{\bf I_\pm-P_\pm}\}f)+\partial_jb_i+\partial_ib_j
+\nabla_x\cdot \langle v\mu^{1/2},\{{\bf I_\pm-P_\pm}\}f\rangle
=A_{ij}(r_\pm+g_\pm),\\
\partial_t B_{j}(\{{\bf I_\pm-P_\pm}\}f)+\partial_jc=B_j(r_\pm+g_\pm),
\end{cases}
\end{equation}
where
\begin{equation}\label{r-G}
  r_\pm=- v\cdot\nabla_x\{{\bf I_\pm-P_\pm}\}f-{L_\pm}f,\quad g_\pm=\frac12 v\cdot E f_\pm\mp (E+v\times B)\cdot\nabla_{ v}f_\pm+{ \Gamma}_\pm(f,f).
\end{equation}
Setting
$$
G\equiv\left\langle v\mu^{1/2},\{{\bf I-P}\}f \cdot q_1 \right\rangle
$$
and noticing $\langle \mu^{1/2}, g_\pm\rangle=0$, we can get from (\ref{Macro-equation}), (\ref{Micro-equation}) that
\begin{equation}\label{Macro-equation1}
\begin{cases}
\partial_t\big(\frac{a_++a_-}2\big)+\nabla_x\cdot b=0\\
\partial_tb_i+\partial_i\big(\frac{a_++a_-}2+2c\big)+\frac12\sum\limits_{j=1}^3\partial_jA_{ij}(\{{\bf I-P}\}f\cdot [1,1])=\frac{a_+-a_-}{2}E_i+[G\times B]_i,\\
\partial_tc+\frac13\nabla_x\cdot b+\frac56\sum\limits_{i=1}^3\partial_i B_i(\{{\bf I-P}\}f\cdot [1,1])=\frac16 G\cdot E,
\end{cases}
\end{equation}
and
\begin{equation}\label{Micro-equation1}
\begin{cases}
\partial_t[\frac12A_{ij}(\{{\bf I-P}\}f\cdot [1,1])+2c\delta_{ij}]+\partial_jb_i+\partial_ib_j
=\frac12A_{ij}(r_++r_-+g_++g_-),\\
\frac12\partial_t B_{j}(\{{\bf I-P}\}f\cdot [1,1])+\partial_jc=\frac12B_{j}(r_++r_-+g_++g_-).
\end{cases}
\end{equation}
Moreover, by using the third equation of (\ref{Macro-equation1}) to replace $\partial_t c$ in the first equation of (\ref{Micro-equation1}), one has
\begin{equation}\label{Micro-equation2}
\begin{split}
\frac12\partial_tA_{ij}(\{{\bf I-P}\}f&\cdot [1,1])+\partial_jb_i+\partial_ib_j-\frac23\delta_{ij}\nabla_x\cdot b\\
&-\frac53\delta_{ij}\nabla_x\cdot B(\{{\bf I-P}\}f\cdot [1,1])=\frac12A_{ij}(r_++r_-+g_++g_-)-\frac13\delta_{ij}G\cdot E.
\end{split}
\end{equation}
In order to further obtain the dissipation rate related to $a_\pm$ from the formula
\[
a_+^2+a_-^2=\frac{|a_++a_-|^2}{2}+\frac{|a_+-a_-|^2}{2},
\]
we need to consider the dissipation of $a_+-a_-$. For that purpose, one can get from $(\ref{Macro-equation1})_1$ and $(\ref{Macro-equation1})_2$ that
\begin{equation}\label{a_+-a_--original}
\begin{cases}
\partial_t(a_+-a_-)+\nabla_x\cdot G=0,\\
\partial_tG+\nabla_x(a_+-a_-)-2E+\nabla_x\cdot A(\{{\bf I-P}\}f\cdot q_1)\\
\qquad \qquad=E(a_++a_-)+2b\times B+\left\langle [v,-v]\mu^{1/2},Lf+\Gamma(f,f)\right\rangle.
\end{cases}
\end{equation}

\section{Energy estimates}
This section is devoted to deducing certain uniform-in-time a priori estimates on the solution $(f(t,x,v), E(t,x),$ $ B(t,x))$ to the Cauchy problem (\ref{f}) and (\ref{f-initial}). For this purpose, suppose that the Cauchy problem (\ref{f}) and (\ref{f-initial}) admits a smooth solution $f(t,x,v)$ over $ 0\leq t\leq T$ for some $0< T \leq\infty$, we now turn to deduce certain energy type a priori estimates on the solution $(f(t,x,v), E(t,x), B(t,x))$ to the Cauchy problem (\ref{f}) and (\ref{f-initial}). Before doing so, let's recall that the parameter $\gamma$ is assumed to satisfy $-3\leq \gamma<-2$ throughout this manuscript.

The first one is on the estimate on $\|(f,E,B)(t)\|_{\dot{H}^{-s}}$.
\begin{lemma}\label{Lemma4.1} Under the assumptions stated above, we have for $s\in[\frac12,\frac32)$ that
\begin{equation}\label{Lemma4.1-1}
\begin{aligned}
\frac{d}{dt}\left(\left\|\Lambda^{-s}f\right\|^2+\left\|\Lambda^{-s}(E,B)\right\|^2\right)+&\left\|\Lambda^{-s}\{{\bf I-P}\}f\right\|_{\sigma}^2\lesssim\left(\mathcal{\bar{E}}_{0,0}(t)\right)^{1/2}\left(\left\|\Lambda^{\frac34-\frac s2}(E,B)\right\|^2+\left\|\Lambda^{\frac34-\frac s2}f\right\|^2_{\sigma}\right)\\
&+\mathcal{\bar{E}}_{1,\frac{\gamma-2}{2(\gamma+2)}}(t)\left(\left\|\Lambda^{\frac32-s}(E,B)\right\|^2
+\left\|\Lambda^{\frac32-s}f\right\|_\sigma^2+\left\|\Lambda^{\frac32-s}\nabla_v\{{\bf I-P}\}f\right\|_{\sigma}^2\right).
\end{aligned}
\end{equation}
\end{lemma}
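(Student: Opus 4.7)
The plan is to apply the Fourier multiplier $\Lambda^{-s}$ (in $x$) to the perturbation equation \eqref{f-sgn} and to the Amp\`ere and Faraday laws in \eqref{f}, then take $L^2$ inner product with $\Lambda^{-s}f$, $\Lambda^{-s}E$, $\Lambda^{-s}B$ respectively, and add the three identities. Since $\Lambda^{-s}$ commutes with $\nabla_x$, with the linearized operator $L$, and with $v$-moments, four useful reductions occur: the transport term $v\cdot\nabla_x f$ drops by antisymmetry; the two Maxwell curls combine to zero after integration by parts in $x$; the coercivity estimate \eqref{coercive} yields $\langle L\Lambda^{-s}f,\Lambda^{-s}f\rangle\gtrsim \|\Lambda^{-s}\{\mathbf{I-P}\}f\|_\sigma^2$; and the linear coupling $-E\cdot v\mu^{1/2}q_1$ in \eqref{f-sgn} cancels exactly with the current $-\int v\mu^{1/2}(f_+-f_-)dv$ in Amp\`ere's law, since $\Lambda^{-s}$ commutes with $\langle v\mu^{1/2},\cdot\rangle$.

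Three classes of nonlinear terms then remain, namely
\begin{equation*}
\mathcal{I}_1=\langle\Lambda^{-s}[q_0(E+v\times B)\cdot\nabla_v f],\Lambda^{-s}f\rangle,\ \ \mathcal{I}_2=\langle\Lambda^{-s}[\tfrac{q_0}{2}E\cdot vf],\Lambda^{-s}f\rangle,\ \ \mathcal{I}_3=\langle\Lambda^{-s}\Gamma(f,f),\Lambda^{-s}f\rangle.
\end{equation*}
For each, the strategy is Cauchy--Schwarz followed by Lemma \ref{lemma2.3} in the form $\|\Lambda^{-s}(gh)\|_{L^2_x}\lesssim\|gh\|_{L^{6/(3+2s)}_x}$; the hypothesis $s\in[\tfrac12,\tfrac32)$ is precisely what keeps the dual exponent $p=6/(3+2s)\in(1,2]$ and the interpolation indices in Corollary \ref{corrollary} admissible. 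The norm $L^{6/(3+2s)}_x$ is then split by H\"older in two different ways. The symmetric split $L^{12/(3+2s)}_x\cdot L^{12/(3+2s)}_x$, combined with $\|g\|_{L^{12/(3+2s)}_x}\lesssim\|\Lambda^{3/4-s/2}g\|$ from Corollary \ref{corrollary}, Minkowski's inequality (Lemma \ref{lemma2.4}) to exchange orders of $x$- and $v$-integration, and the bound $\|\Lambda^{-s}f\|\lesssim\bar{\mathcal{E}}_{0,0}^{1/2}$, produces the first family of right-hand side terms. The asymmetric split $L^2_x\cdot L^{3/s}_x$ together with $\|g\|_{L^{3/s}_x}\lesssim\|\Lambda^{3/2-s}g\|$ produces the second family, with the $L^2_x$ factor absorbed into $\bar{\mathcal{E}}_{1,(\gamma-2)/(2(\gamma+2))}^{1/2}$.

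For $\mathcal{I}_1$, after the asymmetric split one decomposes $\nabla_v f=\nabla_v\mathbf{P}f+\nabla_v\{\mathbf{I-P}\}f$. The derivatives of $\mathbf{P}f$ transfer to Maxwellian factors and reduce to spatial norms of $(a_\pm,b,c)$, already contained in the first-order energy; the $\{\mathbf{I-P}\}f$ part yields exactly the summand $\|\Lambda^{3/2-s}\nabla_v\{\mathbf{I-P}\}f\|_\sigma^2$ on the right of \eqref{Lemma4.1-1}. For $\mathcal{I}_3$ the pointwise estimate \eqref{Gamma-0} gives $|\Gamma(f,f)|_2\lesssim|\mu^\delta f|_2|f|_\sigma$, and the Maxwellian factor $\mu^\delta$ absorbs any velocity polynomial with room to spare.

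The main obstacle is handling the polynomial velocity growth $|v|$ present in the terms $v\times B\cdot\nabla_v f$ and $\tfrac12E\cdot vf$: this factor of $\langle v\rangle$ must be absorbed into the velocity weight $w_\ell$ of \eqref{weight}. The specific choice $\ell=(\gamma-2)/(2(\gamma+2))$ built into $\bar{\mathcal{E}}_{1,\ell}$ is exactly engineered so that $\langle v\rangle\cdot\langle v\rangle^{(\gamma+2)/2}$ is dominated by $w_\ell(t,v)\langle v\rangle^{(\gamma+2)/2}$, trading one power of $\langle v\rangle$ for a single unit of the polynomial part of the weight. Carrying out this bookkeeping uniformly on each nonlinear contribution, together with the macro--micro split in $\mathcal{I}_1$, is the delicate technical core; once this is done, summing the three estimates and absorbing $\|\Lambda^{-s}\{\mathbf{I-P}\}f\|_\sigma^2$ into the dissipation on the left yields \eqref{Lemma4.1-1}.
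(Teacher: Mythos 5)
Your overall skeleton coincides with the paper's proof: Fourier transform in $x$, multiplication by $|\xi|^{-2s}$, coercivity \eqref{coercive} for the linear part, and then estimating the three nonlinear families via the dual Riesz estimate $\|\Lambda^{-s}(gh)\|\lesssim\|gh\|_{L^{6/(3+2s)}_x}$ from Lemma \ref{lemma2.3}, the two H\"older splittings $L^{12/(3+2s)}_x\cdot L^{12/(3+2s)}_x$ and $L^{3/s}_x\cdot L^2_x$, Corollary \ref{corrollary}, and Minkowski's inequality; your weight bookkeeping ($\langle v\rangle^{1-\gamma/2}$ matching $\ell=\tfrac{\gamma-2}{2(\gamma+2)}$) is also exactly the paper's. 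Your handling of $\mathcal{I}_1$ by splitting $\nabla_v f=\nabla_v\mathbf{P}f+\nabla_v\{\mathbf{I-P}\}f$ (rather than splitting the test function and integrating by parts in $v$, as the paper does) is a harmless variant, and it is consistent with the presence of $\|\Lambda^{3/2-s}\nabla_v\{\mathbf{I-P}\}f\|_\sigma^2$ on the right-hand side of \eqref{Lemma4.1-1}.

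The genuine gap is your treatment of the collision term $\mathcal{I}_3$. You invoke \eqref{Gamma-0} as if it were a pointwise-in-$x$ bound $|\Gamma(f,f)|_{2}\lesssim|\mu^\delta f|_2|f|_\sigma$, but \eqref{Gamma-0} is a weak-form trilinear estimate $\langle\Gamma(g_1,g_2),g_3\rangle\lesssim|\mu^\delta g_1|_2|g_2|_\sigma|g_3|_\sigma$, in which the test function is measured in the $\sigma$-norm; no $L^2_v$ bound on $\Gamma(f,f)$ itself follows from it, and indeed none can hold, since $\Gamma$ has the divergence structure $\partial_i\bigl(\{\Phi^{ij}\ast[\mu^{1/2}f]\}\partial_j f\bigr)+\cdots$ and therefore involves second-order $v$-derivatives of $f$, which $|f|_\sigma$ does not control. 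Moreover, in the negative-Sobolev pairing the "test function" is $\Lambda^{-2s}\{\mathbf{I-P}\}f$, and you cannot split $|\xi|^{-2s}=|\xi|^{-s}\cdot|\xi|^{-s}$ across the two factors after returning to physical space, so the weak-form estimate does not apply directly either. The paper resolves precisely this point by expanding $\Gamma$ into its four kernel terms and integrating by parts in $v$ inside the Fourier-space pairing, so that one velocity derivative lands on $|\xi|^{-2s}\mathcal{F}[\{\mathbf{I-P}\}f]$ and is absorbed as $\varepsilon\|\Lambda^{-s}\{\mathbf{I-P}\}f\|_\sigma^2$, while the remaining factors involve only $f$ and $\nabla_v f$ with weights and are estimated through the $L^{3/s}_x$ splitting (yielding the terms $\|\Lambda^{3/2-s}(\mu^\delta f)\|^2\|\langle v\rangle^{\gamma/2+1}\nabla_vf\|^2$ etc.). Without this integration by parts, or some equivalent device, your estimate of $\mathcal{I}_3$ does not go through as stated; the rest of your argument is sound.
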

\begin{proof}
We have by taking Fourier transform of (\ref{f}) with respect to $x$,  multiplying the resulting identity by $|\xi|^{-2{s}}\bar{\hat{f}}_\pm$ with $\bar{\hat{f}}_\pm$ being the complex conjugate of $\hat{f}_\pm$, and integrating the final result with respect to $\xi$ and $v$ over $\mathbb{R}^3_\xi\times\mathbb{R}^3_v$ that
we can get
\begin{equation}\label{3.3}
\left(\partial_t\hat{f}_\pm+ v  \cdot\mathcal{F}[\nabla_xf_\pm]\pm\mathcal{F}[(E+v\times B)\cdot\nabla_{ v  }f_\pm]\mp\frac 1 2 v  \cdot\mathcal{F}[E f_\pm]\mp\hat{E}\cdot v {\bf \mu}^{\frac12}+\mathcal{F}[{ L}_\pm f]-\mathcal{F}[{ \Gamma}_\pm(f,f)]\ \Big|\ |\xi|^{-2s}\hat{f}\right)=0.
\end{equation}
Here and in the rest of this manuscript $(f\ |\ g)$ is used to denote $(f,\bar{g})$. Recall that throughout this manuscript, $\mathcal{F}[g](t,\xi,v)=\hat{g}(t,\xi,v)$ denotes the Fourier transform of $g(t,x,v)$ with respect to $x$.

(\ref{3.3}) together with (\ref{coercive}) yield
\begin{equation}\label{4.6}
\begin{aligned}
&\frac{d}{dt}\left(\left\|\Lambda^{-s}f\right\|^2+\left\|\Lambda^{-s}(E,B)\right\|^2\right)+\left\|\Lambda^{-s}\{{\bf I-P}\}f\right\|_{\sigma}^2\\
\lesssim&\underbrace{\sum_\pm\left|\left(\mathcal{F}[E\cdot\nabla_{ v  }f_\pm]\ \Big|\ |\xi|^{-2s}\hat{f}_\pm\right)\right|}_{I_1}+\underbrace{\sum_\pm\left|\left(\mathcal{F}[(v\times B)\cdot\nabla_{ v  }f_\pm]\ \Big|\ |\xi|^{-2s}\hat{f}_\pm\right)\right|}_{I_2}\\
&+\underbrace{\sum_\pm\left|\left(v  \cdot\mathcal{F}[E f_\pm]\ \Big|\ |\xi|^{-2s}\hat{f}_\pm\right)\right|}_{I_3}+\underbrace{\sum_\pm\left|\left(\mathcal{F}[{ \Gamma}_\pm(f,f)]\ \Big|\ |\xi|^{-2s}\hat{f}_\pm\right)\right|}_{I_4}.
\end{aligned}
\end{equation}
To estimate $I_j$ $(j=1,2,3,4)$, we have from Lemma \ref{lemma2.2}, Lemma \ref{lemma2.3}, Corollary \ref{corrollary}, and Lemma \ref{lemma2.4} that
\begin{equation*}
\begin{aligned}
I_1\lesssim&\left|\left(\mathcal{F}[E\cdot\nabla_{ v  }f]\ \Big|\ |\xi|^{-2s}\mathcal{F}[{\bf P}f]\right)\right|
+\left|\left(\mathcal{F}[E\cdot\nabla_{ v  }f]\ \Big|\ |\xi|^{-2s}\mathcal{F}[{\bf \{I-P\}}f]\right)\right|\\
\lesssim&\left\|\Lambda^{-s}\left(E\cdot \mu^{\delta}f\right)\right\|\left\|\Lambda^{-s}\left(\mu^{\delta}f\right)\right\|
+\left\|\Lambda^{-s}\left(E f \langle v\rangle^{-\frac{\gamma}{2}}\right)\right\| \left\|\Lambda^{-s}\left({\bf\{I-P\}}f\right)\right\|_{\sigma}\\
\lesssim&\left\|E\cdot \mu^{\delta}f\right\|_{L_x^{\frac{6}{3+2s}}}\left\|\Lambda^{-s}\left(\mu^{\delta}f\right)\right\|
+\left\|E f \langle v\rangle^{-\frac{\gamma}{2}}\right\|_{L_x^{\frac{6}{3+2s}}}\left\|\Lambda^{-s}({\bf\{I-P\}}f)\right\|_{\sigma}\\
\lesssim&\|E\|_{L_x^{\frac{12}{3+2s}}}\left\|\mu^{\delta}f\right\|_{L_x^{\frac{12}{3+2s}}}
\left\|\Lambda^{-s}\left(\mu^{\delta}f\right)\right\|+\left(\|E\|_{L_x^{\frac{3}{s}}}
\left\|f \langle v\rangle^{-\frac{\gamma}{2}}\right\|\right)^2+\varepsilon\left\|\Lambda^{-s}({\bf\{I-P\}}f)\right\|^2_{\sigma}\\
\lesssim&\left\|\Lambda^{\frac34-\frac s2}E\right\|\left\|\Lambda^{\frac34-\frac s2}\left(\mu^{\delta}f\right)\right\|\left\|\Lambda^{-s}\left(\mu^{\delta}f\right)\right\|
+\left\|\Lambda^{\frac32-s}E\right\|^2\left\|f \langle v\rangle^{-\frac{\gamma}{2}}\right\|^2+\varepsilon\left\|\Lambda^{-s}({\bf\{I-P\}}f)\right\|^2_{\sigma}\\
\lesssim&\left(\mathcal{\bar{E}}_{0,0}(t)\right)^{1/2}\left(\left\|\Lambda^{\frac34-\frac s2}E\right\|^2+\left\|\Lambda^{\frac34-\frac s2}f\right\|^2_{\sigma}\right)+\mathcal{\bar{E}}_{0,\frac{\gamma}{2(\gamma+2)}}(t)\left\|\Lambda^{\frac32-s}E\right\|^2
+\varepsilon\left\|\Lambda^{-s}({\bf\{I-P\}}f)\right\|^2_{\sigma}.
\end{aligned}
\end{equation*}
For $I_2$ and $I_3$, we have by repeating the argument used in deducing the estimate on $I_1$ that
\begin{eqnarray*}
I_2+I_3&\lesssim&\left(\mathcal{\bar{E}}_{0,0}(t)\right)^{1/2}\left(\left\|\Lambda^{\frac34-\frac s2}(E,B)\right\|^2+\left\|\Lambda^{\frac34-\frac s2}f\right\|^2_{\sigma}\right)\\
&&+\mathcal{\bar{E}}_{0,\frac{\gamma-2}{2(\gamma+2)}}(t)\left\|\Lambda^{\frac32-s}(E,B)\right\|^2
+\varepsilon\left\|\Lambda^{-s}({\bf\{I-P\}}f)\right\|^2_{\sigma}.
\end{eqnarray*}
As to $I_4$, due to
\begin{eqnarray*}
&&\left(\mathcal{F}[{\bf \Gamma}(f,f)]\ \Big|\ |\xi|^{-2s}\mathcal{F}\left[{{\bf\{I-P\}}f}\right]\right)\\
&=&\left(\mathcal{F}\left[\partial_i(\{\Phi^{ij}\ast[\mu^{1/2}f]\}\partial_j f)-\{\Phi^{ij}\ast[v_i\mu^{1/2}f]\}\partial_j f\right.\right.\\
&&\left.\left.-\partial_i(\{\Phi^{ij}\ast[\mu^{1/2}\partial_jf]\} f)+\{\Phi^{ij}\ast[v_i\mu^{1/2}\partial_jf]\} f\right],|\xi|^{-2s}\mathcal{F}\left[{{\bf\{I-P\}}f}\right]\right)\\
&=&\underbrace{-\left(\mathcal{F}\left[\{\Phi^{ij}\ast[\mu^{1/2}f]\}\partial_j f\right]\ \Big|\ |\xi|^{-2s}\mathcal{F}\left[{{\bf\{I-P\}}f}\right]\right)}_{I_{4,1}}\\
&&+\underbrace{\left(\mathcal{F}\left[\{\Phi^{ij}\ast[\mu^{1/2}\partial_jf]\} f\right]\ \Big|\ |\xi|^{-2s}\mathcal{F}\left[{{\bf\{I-P\}}f}\right]\right)}_{I_{4,2}}\\
&&\underbrace{-\left(\mathcal{F}\left[\{\Phi^{ij}\ast[v_i\mu^{1/2}f]\}\partial_j f\right]\ \Big|\ |\xi|^{-2s}\mathcal{F}\left[{{\bf\{I-P\}}f}\right]\right)}_{I_{4,3}}\\
&&+\underbrace{\left(\mathcal{F}\left[\{\Phi^{ij}\ast[v_i\mu^{1/2}\partial_jf]\} f\right]\ \Big|\ |\xi|^{-2s}\mathcal{F}\left[{{\bf\{I-P\}}f}\right]\right)}_{I_{4,4}},
\end{eqnarray*}
we have from Lemma \ref{lemma2.2}, Lemma \ref{lemma2.3}, Corollary \ref{corrollary}, and Lemma \ref{lemma2.4} that
\begin{equation*}
\begin{aligned}
I_{4,1}\lesssim&\left\|\Lambda^{-s}\left(\langle v \rangle^{-\frac\gamma2}\{\Phi^{ij}\ast[\mu^{1/2}f]\}\partial_j f\right)\right\|\left\|\Lambda^{-s}\partial_i{\bf\{I-P\}}f\langle v \rangle^{\frac{\gamma}2}\right\|\\
\lesssim&\left\|\langle v \rangle^{-\frac\gamma2}\left\{\Phi^{ij}\ast\left[\mu^{1/2}f\right]\right\}\partial_j f\right\|_{L_x^{\frac{6}{3+2s}}}\left\|\Lambda^{-s}{\bf\{I-P\}}f\right\|_\sigma\\
\lesssim&\left\|\left|\mu^\delta f\right|^2_2\left|\langle v\rangle^{\frac\gamma2+2}\nabla_vf\right|^2_2\right\|_{L_x^{\frac{6}{3+2s}}}
+\varepsilon\left\|\Lambda^{-s}{\bf\{I-P\}}f\right\|^2_\sigma\\
\lesssim&\left\|\mu^\delta f\right\|^2_{L_x^{\frac3s}}\left\|\langle v\rangle^{\frac\gamma2+2}\nabla_vf\right\|^2+\varepsilon\left\|\Lambda^{-s}{\bf\{I-P\}}f\right\|^2_\sigma\\
\lesssim&\left\|\Lambda^{\frac32-s}\left(\mu^\delta f\right)\right\|^2\left\|\langle v\rangle^{\frac\gamma2+2}\nabla_vf\right\|^2+\varepsilon\left\|\Lambda^{-s}{\bf\{I-P\}}f\right\|^2_\sigma.
\end{aligned}
\end{equation*}
Similarly we can get that
\begin{equation*}
\begin{aligned}
\sum_{j=2}^4I_{4,j}\lesssim\left\|\Lambda^{\frac32-s}\left(\mu^\delta \nabla_vf\right)\right\|^2\left\|\langle v\rangle^{\frac\gamma2+2}f\right\|^2+\left\|\Lambda^{\frac32-s}\left(\mu^\delta f\right)\right\|^2\left\|\langle v\rangle^{\frac\gamma2+1}\nabla_vf\right\|^2+\varepsilon\left\|\Lambda^{-s}{\bf\{I-P\}}f\right\|^2_\sigma.
\end{aligned}
\end{equation*}
Consequently
$$
I_4\lesssim\left\|\Lambda^{\frac32-s}\left(\mu^\delta \nabla_vf\right)\right\|^2\left\|\langle v\rangle^{\frac\gamma2+2}f\right\|^2+\left\|\Lambda^{\frac32-s}\left(\mu^\delta f\right)\right\|^2\left\|\langle v\rangle^{\frac\gamma2+1}\nabla_vf\right\|^2+\varepsilon\left\|\Lambda^{-s}{\bf\{I-P\}}f\right\|^2_\sigma.
$$

Substituting the estimates on $I_j\ (j=1,2,3,4)$ into (\ref{4.6}) yields
\begin{equation*}
\begin{aligned}
\frac{d}{dt}\left(\left\|\Lambda^{-s}f\right\|^2+\left\|\Lambda^{-s}\nabla\phi\right\|^2\right)+&\left\|\Lambda^{-s}\{{\bf I-P}\}f\right\|_{\sigma}^2
\lesssim\left(\mathcal{\bar{E}}_{0,0}(t)\right)^{1/2}\left(\left\|\Lambda^{\frac34-\frac s2}(E,B)\right\|^2
+\left\|\Lambda^{\frac34-\frac s2}f\right\|^2_{\sigma}\right)\\
&\quad\quad+\mathcal{\bar{E}}_{1,\frac{\gamma-2}{2(\gamma+2)}}(t)\left(\left\|\Lambda^{\frac32-s}(E,B)\right\|^2
+\left\|\Lambda^{\frac32-s}f\right\|_\sigma^2+\left\|\Lambda^{\frac32-s}\nabla_v\{{\bf I-P}\}f\right\|_{\sigma}^2\right).
\end{aligned}
\end{equation*}
Having obtained the above inequality, (\ref{Lemma4.1-1}) follows immediately and the proof of Lemma $\ref{Lemma4.1}$ is complete.
\end{proof}

(\ref{Lemma4.1-1}) tells us that to deduce an estimate on $\|(f,E,B)(t)\|_{\dot{H}^{-s}}$, one has to estimate
$\left\|\Lambda^{\frac34-\frac s2}(f,E,B)\right\|$, $\left\|\Lambda^{\frac32-s}(f,E,B)\right\|_\sigma$, and $\left\|\Lambda^{\frac32-s}\nabla_v\{{\bf I-P}\}f\right\|_{\sigma}$ first. Due to the fact that the assumption $s\leq \frac 32$ implies $\frac 34-\frac s2\geq 0$ and $\frac 32-s\geq 0$ and noticing that the nonlinear energy method developed by \cite{Guo-IUMJ-04} and \cite{Liu-Yang-Yu} for the Boltzmann type equations can indeed yield a nice estimate on the microscopic component $\{{\bf I-P}\}f$ together with its integer-order derivatives with respect to $x$ and $v$, one can control $\left\|\Lambda^{\frac34-\frac s2}\{{\bf I-P}\}f\right\|$, $\left\|\Lambda^{\frac32-s}\{{\bf I-P}\}f\right\|_\sigma$, and $\left\|\Lambda^{\frac32-s}\nabla_v\{{\bf I-P}\}f\right\|_{\sigma}$ suitably. Thus to estimate $\left\|\Lambda^{\frac34-\frac s2}(f,E,B)\right\|$, $\left\|\Lambda^{\frac32-s}(f,E,B)\right\|_\sigma$, and $\left\|\Lambda^{\frac32-s}\nabla_v\{{\bf I-P}\}f\right\|_{\sigma}$ is reduced to estimate $\left\|\Lambda^{\frac34-\frac s2}({\bf P}f,E,B)\right\|$ and $\left\|\Lambda^{\frac32-s}({\bf P}f,E,B)\right\|$ suitably. To this end, if we assume further that $s\geq \frac 12$, then it is easy to see that $\frac 34-\frac s2\geq 1-s$, $\frac 32-s\geq 1-s$. Such a fact, the standard interpolation technique together with the fact that the the nonlinear energy method developed by \cite{Guo-IUMJ-04} and \cite{Liu-Yang-Yu} for the Boltzmann type equations can also yield an estimate on
$\nabla_x^k({\bf P}f, E, B)$ for positive integer $k$ imply that we only need to deduce certain estimates on $\|\Lambda^{1-s}{\bf P}f\|^2$ and $\|\Lambda^{1-s}(E,B)\|^2$. This is exactly what we want to do in the coming two lemmas. The first one is concerned with the macro dissipation estimate on $\|\Lambda^{1-s}{\bf P}f\|^2$.
\begin{lemma}\label{Lemma4.2}
Let $s\in [\frac12, \frac32)$, there exists an interactive functional $G^{-s}_{f}(t)$ satisfying
\begin{equation*}
G^{-s}_{f}(t)\lesssim \left\|\Lambda^{-s}f\right\|^2+\left\|\Lambda^{1-s}f\right\|^2
\end{equation*}
such that
\begin{equation}\label{Lemma4.2-1}
\begin{aligned}
\frac{d}{dt}G^{-s}_{f}(t)+\left\|\Lambda^{1-s}{\bf P}f\right\|^2+\left\|\Lambda^{-s}(a_+-a_-)\right\|^2
\lesssim\left\|\Lambda^{-s}\{{\bf I-P}\}f\right\|^2_\sigma+\left\|\Lambda^{1-s}\{{\bf I-P}\}f\right\|^2_\sigma+\mathcal{\bar{E}}_{1,0}(t)\mathcal{\bar{D}}_{2,0}(t)
\end{aligned}
\end{equation}
holds for any $0\leq t\leq T$.
\end{lemma}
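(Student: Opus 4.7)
The plan is to establish (\ref{Lemma4.2-1}) by constructing $G^{-s}_f(t)$ as a weighted sum of cross-interaction functionals between appropriate microscopic moments and gradients of the hydrodynamic variables, at the $\Lambda^{-s}$ Sobolev level. This is the negative-Sobolev analogue of the standard macroscopic dissipation estimate, in which every $L^2_x$-type pairing is replaced by the weighted pairing $(\,\cdot\,,\Lambda^{-2s}\,\cdot\,)$, or equivalently one works in Fourier with weight $|\xi|^{-2s}$. The fluid-type system (\ref{Macro-equation1}), (\ref{Micro-equation1}), (\ref{Micro-equation2}), together with (\ref{a_+-a_--original}), supplies all the equations needed; I will exploit the structure in which $\partial b$, $\partial c$, $\partial(a_++a_-)$, and $\nabla_x(a_+-a_-)$ appear on the left-hand sides.

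For the dissipation of $b$ I would take the $L^2_x$ pairing of (\ref{Micro-equation2}) with $\Lambda^{-2s}\partial_j b_i$ and sum over $i,j$; a direct Fourier computation of the contraction against $\partial_j b_i+\partial_i b_j-\frac{2}{3}\delta_{ij}\nabla_x\cdot b$ yields the coercive combination $\|\Lambda^{1-s}b\|^2+\frac{1}{3}\|\Lambda^{-s}\nabla_x\cdot b\|^2$. For the dissipation of $c$ I would pair $(\ref{Micro-equation1})_2$ with $\Lambda^{-2s}\partial_j c$, giving $\|\Lambda^{1-s}c\|^2$. For the dissipation of $a_++a_-$ I would pair $(\ref{Macro-equation1})_2$ with $\Lambda^{-2s}\partial_i(a_++a_-)$, which delivers $\frac{1}{2}\|\Lambda^{1-s}(a_++a_-)\|^2$ after Cauchy-Schwarz with a small constant to absorb the coupling terms $\partial_i c$ (controlled by the $c$-dissipation above) and the time-cross-term $\partial_t b_i$ (handled by integration by parts in $t$ together with $(\ref{Macro-equation1})_1$). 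In each of these steps the time derivative produces a full $\frac{d}{dt}(\,\cdot\,,\Lambda^{-2s}\partial\,\cdot)$ contribution which becomes a summand of $G^{-s}_f(t)$, and Cauchy-Schwarz gives the bound $|G^{-s}_f(t)|\lesssim\|\Lambda^{-s}f\|^2+\|\Lambda^{1-s}f\|^2$.

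The key new ingredient is the dissipation of $a_+-a_-$ at the lower Sobolev level $-s$. For this I would pair the second equation of (\ref{a_+-a_--original}) with $\Lambda^{-2s}\nabla_x(a_+-a_-)$: the linear term $\nabla_x(a_+-a_-)$ directly contributes $\|\Lambda^{1-s}(a_+-a_-)\|^2$, while the electric-force term $-2E$ combined with the Maxwell constraint $\nabla_x\cdot E=a_+-a_-$ yields, in Fourier,
\begin{equation*}
-2(E,\Lambda^{-2s}\nabla_x(a_+-a_-))=-2(E,\Lambda^{-2s}\nabla_x(\nabla_x\cdot E))=2\|\Lambda^{-s}(\nabla_x\cdot E)\|^2=2\|\Lambda^{-s}(a_+-a_-)\|^2,
\end{equation*}
which once moved to the left-hand side is exactly the desired dissipation. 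The $\frac{d}{dt}$ of the resulting summand $(G,\Lambda^{-2s}\nabla_x(a_+-a_-))$ in $G^{-s}_f(t)$ generates, via $(\ref{a_+-a_--original})_1$, the complementary term $\|\Lambda^{-s}\nabla_x\cdot G\|^2$; since $G=\langle v\mu^{1/2},\{{\bf I-P}\}f\cdot q_1\rangle$ is a velocity moment of $\{{\bf I-P}\}f$, this is controlled by $\|\Lambda^{1-s}\{{\bf I-P}\}f\|_\sigma^2$.

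It remains to bound the error terms on the right-hand sides. The linear contributions from $L_\pm f$ and from $r_\pm$ (entering via $A_{ij}(r_\pm)$ and $B_j(r_\pm)$) reduce to moments of $\{{\bf I-P}\}f$ and its first $x$-derivative, and produce $\|\Lambda^{-s}\{{\bf I-P}\}f\|_\sigma^2+\|\Lambda^{1-s}\{{\bf I-P}\}f\|_\sigma^2$ plus a small multiple of the left-hand-side dissipation that can be absorbed. The nonlinear pieces $g_\pm$ and the quadratic term $E(a_++a_-)+2b\times B$ from $(\ref{a_+-a_--original})_2$ are treated in the negative Sobolev framework exactly as $I_1,\dots,I_4$ were in the proof of Lemma \ref{Lemma4.1}: combine the Hardy-Littlewood-Sobolev embedding (Lemma \ref{lemma2.3}), the interpolations of Corollary \ref{corrollary} that place one factor in $L^{3/s}_x$ or $L^{12/(3+2s)}_x$, and Minkowski's inequality (Lemma \ref{lemma2.4}) to interchange the orders of $x$ and $v$ integrations, producing the stated $\mathcal{\bar{E}}_{1,0}(t)\mathcal{\bar{D}}_{2,0}(t)$ upper bound. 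The main obstacle will be the careful bookkeeping of these nonlinear estimates: one must verify that every derivative and every velocity weight extracted is actually available within $\mathcal{\bar{E}}_{1,0}(t)$ or $\mathcal{\bar{D}}_{2,0}(t)$, so that no uncontrolled residual remains, which in turn requires a delicate split of each product (for instance $E\cdot v f_\pm$) between its ${\bf P}f$ and $\{{\bf I-P}\}f$ parts and the exploitation of the $\mu^\delta$ decay on the microscopic side.
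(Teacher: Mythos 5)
Your proposal is correct and follows essentially the same route as the paper: the paper also builds $G^{-s}_f(t)$ from cross pairings of the fluid-type equations \eqref{Macro-equation1}, \eqref{Micro-equation1}, \eqref{Micro-equation2} with $|\xi|^{-2s}$-weighted gradients of $c$, $b$, $a_++a_-$ (its $G^{-s}_c$, $G^{-s}_b$, $G^{-s}_a$), and obtains the $a_+-a_-$ dissipation at both levels $1-s$ and $-s$ exactly as you do, by pairing the $G$-equation in \eqref{a_+-a_--original} with $\Lambda^{-2s}\nabla_x(a_+-a_-)$, using Gauss's law $\nabla_x\cdot E=a_+-a_-$ for the $-2E$ term, integrating the $\partial_t G$ term by parts in time via the continuity equation, and bounding the microscopic moments by $\sigma$-norms and the nonlinear terms by the Lemma \ref{Lemma4.1}-type negative-Sobolev product estimates to reach $\mathcal{\bar{E}}_{1,0}(t)\mathcal{\bar{D}}_{2,0}(t)$.
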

\begin{proof}
Through Fourier transform, we have from (\ref{Macro-equation1}) and (\ref{Micro-equation1}) that
\begin{equation}\label{Macro-Fourier}
\begin{cases}
\partial_t\mathcal{F}\big[\frac{a_++a_-}2\big]+i\xi\cdot \hat{b}=0\\
\partial_t\hat{b}_i+i\xi_i\mathcal{F}[\frac{a_++a_-}2+2c\big]+\frac12\sum\limits_{j=1}^3i\xi_jA_{ij}\left(\{{\bf I-P}\}\hat{f}\cdot [1,1]\right)\\
\qquad\qquad=\mathcal{F}\big[E_i\frac{a_+-a_-}{2}\big]+\mathcal{F}\big[[G\times B]_i\big],\\
\partial_t\hat{c}+\frac13i\xi\cdot \hat{b}+\frac56\sum\limits_{i=1}^3i\xi_i B_i\left(\{{\bf I-P}\}\hat{f}\cdot [1,1]\right)=\frac16 \mathcal{F}\big[G\cdot E\big],
\end{cases}
\end{equation}
and
\begin{equation}\label{Micro-Fourier}
\begin{cases}
\frac12\partial_tA_{ij}\left(\{{\bf I-P}\}\hat{f}\cdot [1,1]\right) +i\xi_j\hat{b}_i+i\xi_i\hat{b}_j-\frac23\delta_{ij}i\xi\cdot \hat{b}\\
\qquad\qquad -\frac53\delta_{ij}i\xi\cdot B\left(\{{\bf I-P}\}\hat{f}\cdot [1,1]\right) =\frac12A_{ij}\left(\hat{r}_++\hat{r}_-+\hat{g}_++\hat{g}_-\right)-\frac13\delta_{ij}\mathcal{F}\big[G\cdot E\big],\\
\frac12\partial_t B_{j}\left(\{{\bf I-P}\}\hat{f}\cdot [1,1]\right) +i\xi_j\hat{c}=\frac12B_{j}\left(\hat{r}_++\hat{r}_-+\hat{g}_++\hat{g}_-\right).
\end{cases}
\end{equation}
Similarly, we can deduce from (\ref{a_+-a_--original}) that
\begin{equation}\label{a_+-a_-}
\begin{cases}
\partial_t(\hat{a}_+-\hat{a}_-)+i\xi\cdot \hat{G}=0,\\
\partial_t\hat{G}+i\xi(\hat{a}_+-\hat{a}_-)-2\hat{E}+i\xi\cdot A\left(\{{\bf I-P}\}\hat{f}\cdot q_1\right)\\
\qquad\qquad=\mathcal{F}[E(a_++a_-)]+\mathcal{F}[2b\times B]+\left\langle [v,-v]\mu^{1/2},L\hat{f}+\mathcal{F}[\Gamma(f,f)]\right\rangle,\\
i\xi\cdot \hat{E}=\hat{a}_+-\hat{a}_-.
\end{cases}
\end{equation}
Taking the $L^2$ inner product of $(\ref{Micro-Fourier})_2  $ with $i\xi_j|\xi|^{-2s}\bar{\hat{c}}_j$, the bound of $c$ follows from
\begin{equation}\label{4.18}
\begin{split}
&\left\|\Lambda^{1-s}c\right\|^2=\left\||\xi|^{1-s}\hat{c}\right\|^2=\sum_{j=1}^3\left(i\xi_j \hat{c}\ \Big|\  i\xi_j\hat{c}|\xi|^{-2s}\right)\\
\lesssim&-\sum_{i=1}^3\left(\partial_t B_{j}\left(\{{\bf I-P}\}\hat{f}\cdot [1,1]\right)\ \Big|\  i\xi_j\hat{c}|\xi|^{-2s}\right)+\sum_{i=1}^3\left(B_{j}\left(\hat{r}_++\hat{r}_-+\hat{g}_++\hat{g}_-\right)\ \Big|\  i\xi_j\hat{c}|\xi|^{-2s}\right)\\
=&-\frac{d}{dt}\left(B_{j}\left(\{{\bf I-P}\}\hat{f}\cdot [1,1]\right)\ \Big|\  i\xi_j\hat{c}|\xi|^{-2s}\right)
+\left(B_{j}\left(\{{\bf I-P}\}\hat{f}\cdot [1,1]\right)\ \Big|\  i\xi_j\partial_t \hat{c}|\xi|^{-2s}\right)\\
&+\sum_{i=1}^3\left(B_{j}\left(\hat{r}_++\hat{r}_-+\hat{g}_++\hat{g}_-\right)\ \Big|\  i\xi_j\hat{c}|\xi|^{-2s}\right).
\end{split}
\end{equation}
For the second term on the right hand of the second equality in \eqref{4.18}, we have from $(\ref{Macro-Fourier})_3$ that
\begin{equation*}
\begin{aligned}
&\left(B_{j}\left(\{{\bf I-P}\}\hat{f}\cdot [1,1]\right)\ \Big|\  i\xi_j\partial_t \hat{c}|\xi|^{-2s}\right)\\
=&\bigg(B_{j}\left(\{{\bf I-P}\}\hat{f}\cdot [1,1]\right)\ \Big|\  i\xi_j|\xi|^{-2s}\bigg\{\frac16 \mathcal{F}\big[G\cdot E\big]-\frac13i\xi\cdot \hat{b}-\frac56\sum_{j=1}^3i\xi_j B_j\left(\{{\bf I-P}\}\hat{f}\cdot [1,1]\right)\bigg\}\bigg)\\
\lesssim&\varepsilon\left\|\Lambda^{1-s}b\right\|^2+\left\|\Lambda^{-s}\{{\bf I-P}\}f\right\|_\sigma^2
+\left\|\Lambda^{1-s}\{{\bf I-P}\}f\right\|_\sigma^2+\mathcal{\bar{E}}_{1,0}(t)\mathcal{\bar{D}}_{2,0}(t),
\end{aligned}
\end{equation*}
while for the last term on the right hand of (\ref{4.18}), we can deduce that
\begin{equation*}
\begin{aligned}
&\left(B_{j}\left(\hat{r}_++\hat{r}_-+\hat{g}_++\hat{g}_-\right)\ \Big|\  i\xi_j\hat{c}|\xi|^{-2s}\right)\\
\lesssim&\varepsilon\left\|\Lambda^{1-s}c\right\|^2+\left\|\Lambda^{1-s}\{{\bf I-P}\}f\right\|_\sigma^2
+\left\|\Lambda^{-s}\{{\bf I-P}\}f\right\|^2_\sigma+\mathcal{\bar{E}}_{1,0}(t)\mathcal{\bar{D}}_{2,0}(t).
\end{aligned}
\end{equation*}
Thus we can get that
\begin{equation}\label{G-c}
\begin{aligned}
\frac{d}{dt}G^{-s}_c(t)+\left\|\Lambda^{1-s}c\right\|\lesssim\varepsilon\left\|\Lambda^{1-s}b\right\|^2
+\left\|\Lambda^{-s}\{{\bf I-P}\}f\right\|_\sigma^2+\left\|\Lambda^{1-s}\{{\bf I-P}\}f\right\|_\sigma^2+\mathcal{\bar{E}}_{1,0}(t)\mathcal{\bar{D}}_{2,0}(t).
\end{aligned}
\end{equation}
Here
\[
G^{-s}_c(t)\equiv\sum_{j=1}^3\left(B_{j}\left(\{{\bf I-P}\}\hat{f}\cdot [1,1]\right)\ \Big|\  i\xi_j\hat{c}|\xi|^{-2s}\right).
\]
On the other hand, noticing
\begin{eqnarray*}
&&\int_{\mathbb{R}^3}\left||\xi|^{-s}\left(\xi_j\hat{b}_i+\xi_i\hat{b}_j-\frac23\delta_{ij}\xi\cdot \hat{b}\right)\right|^2d\xi\\
&=&2\sum_{ij}\left\||\xi|^{-s}\xi_i b_j\right\|^2+\frac23\left\||\xi|^{-s}\xi\cdot b\right\|^2\\
&=&2\left\|\Lambda^{1-s}b\right\|^2+\frac23\left\|\Lambda^{-s}(\nabla_x\cdot b)\right\|^2,
\end{eqnarray*}
we can get by repeating the argument used to deduce (\ref{G-c}) and by using $(\ref{Micro-Fourier})_1$ and $(\ref{Macro-Fourier})_2$ that
\begin{equation}\label{G-b}
\begin{aligned}
\frac{d}{dt}G^{-s}_b(t)+\left\|\Lambda^{1-s}b\right\|^2\lesssim&\varepsilon\left(\left\|\Lambda^{1-s}(a_++a_-)\right\|^2
+\left\|\Lambda^{1-s}c\right\|^2\right)\\
&+\left\|\Lambda^{-s}\{{\bf I-P}\}f\right\|^2_\sigma+\left\|\Lambda^{1-s}\{{\bf I-P}\}f\right\|_\sigma^2+\mathcal{\bar{E}}_{1,0}(t)\mathcal{\bar{D}}_{2,0}(t).
\end{aligned}
\end{equation}
Next, we estimate $a_++a_-$. To this end, we have from $(\ref{Macro-Fourier})_2$, $(\ref{Macro-Fourier})_1$ and by employing the same argument to deduce $(\ref{G-c})$ that
\begin{equation}\label{G-a}
\begin{aligned}
\frac{d}{dt}G^{-s}_a(t)+\|\Lambda^{1-s}(a_++a_-)\|^2\lesssim\|\Lambda^{1-s}b\|^2+\|\Lambda^{1-s}c\|^2
+\|\Lambda^{1-s}\{{\bf I-P}\}f\|_\sigma^2+\mathcal{\bar{E}}_{1,0}(t)\mathcal{\bar{D}}_{2,0}(t)
\end{aligned}
\end{equation}
with
$$
G^{-s}_a(t)\equiv\sum_{i=1}^3\left(\hat{b}_i\ \Big|\  i\xi_i(\hat{a}_++\hat{a}_-)|\xi|^{-2s}\right).
$$

Set
\begin{equation*}
G^{-s}_{\bar{f}}(t)=G^{-s}_b(t)+\kappa_1G^{-s}_c(t)+\kappa_2G^{-s}_a(t), , 0<\kappa_2\ll\kappa_1\ll1.
\end{equation*}
we can deduce from (\ref{G-c}), (\ref{G-b}), and (\ref{G-a}) that
\begin{equation}\label{g-f-}
\begin{aligned}
\frac{d}{dt}G^{-s}_{\bar{f}}(t)+\left\|\Lambda^{1-s}(a_++a_-,b,c)\right\|^2\lesssim\left\|\Lambda^{-s}\{{\bf I-P}\}f\right\|^2_\sigma+\left\|\Lambda^{1-s}\{{\bf I-P}\}f\right\|^2_\sigma+\mathcal{\bar{E}}_{1,0}(t)\mathcal{\bar{D}}_{2,0}(t).
\end{aligned}
\end{equation}
Finally, for the corresponding estimate on $a_+-a_-$, we have from ($\ref{a_+-a_-}$) that
\begin{equation}
\begin{aligned}
&\left\|\Lambda^{1-s}(a_+-a_-)\right\|^2+2\left\|\Lambda^{-s}(a_+-a_-)\right\|^2
=\left(i\xi(\hat{a}_+-\hat{a}_-)-2\hat{E}\ \Big|\  i\xi(\hat{a}_+-\hat{a}_-)|\xi|^{-2s}\right)\\
=&\left(-\partial_t\hat{G}-i\xi\cdot A\left(\{{\bf I-P}\}\hat{f}\cdot q_1\right)+\mathcal{F}[E(a_++a_-)]\ \Big|\  i\xi(\hat{a}_+-\hat{a}_-)|\xi|^{-2s}\right)\\
&+\left(\mathcal{F}[2b\times B]+\left\langle [v,-v]\mu^{1/2},L\hat{f}+\mathcal{F}[\Gamma(f,f)]\right\rangle\ \Big|\  i\xi(\hat{a}_+-\hat{a}_-)|\xi|^{-2s}\right)\\
=&-\frac{d}{dt}\left(\hat{G}\ \Big|\  i\xi\left(\hat{a}_+-\hat{a}_-\right)|\xi|^{-2s}\right)+\left(\hat{G}\ \Big|\  \xi \xi\cdot\hat{G}|\xi|^{-2s}\right)\\
&+\left(-i\xi\cdot A\left(\{{\bf I-P}\}\hat{f}\cdot q_1\right)+\mathcal{F}[E(a_++a_-)]\ \Big|\  i\xi(\hat{a}_+-\hat{a}_-)|\xi|^{-2s}\right)\\
&+\left(\mathcal{F}[2b\times B]+\left\langle [v,-v]\mu^{1/2},L\hat{f}+\mathcal{F}[\Gamma(f,f)]\right\rangle\ \Big|\  i\xi(\hat{a}_+-\hat{a}_-)|\xi|^{-2s}\right)\\
\lesssim&-\frac{d}{dt}\left(\hat{G}\ \Big|\  i\xi(\hat{a}_+-\hat{a}_-)|\xi|^{-2s}\right)+\left\|\Lambda^{1-s}\{{\bf I-P}\}f\right\|^2_\sigma+\mathcal{\bar{E}}_{1,0}(t)\mathcal{\bar{D}}_{2,0}(t).
\end{aligned}
\end{equation}
Consequently
\begin{equation}\label{g-a}
\begin{aligned}
&\frac{d}{dt}\mathfrak{R}\left(\hat{G}\ \Big|\  i\xi(\hat{a}_+-\hat{a}_-)|\xi|^{-2s}\right)+\left\|\Lambda^{1-s}(a_+-a_-)\right\|^2
+\left\|\Lambda^{-s}(a_+-a_-)\right\|^2\\[2mm]
\lesssim&\left\|\Lambda^{1-s}\{{\bf I-P}\}f\right\|^2_\sigma+\mathcal{\bar{E}}_{1,0}(t)\mathcal{\bar{D}}_{2,0}(t).
\end{aligned}
\end{equation}
Here and in the rest of this manuscript $\mathfrak{R}$ is used to denote the real part of a complex function.

A suitable linear combination of (\ref{g-f-}) and (\ref{g-a}) gives
\begin{equation}
\begin{aligned}
&\frac{d}{dt}G^{-s}_{f}(t)+\left\|\Lambda^{1-s}\left(a_+\pm a_-,b,c\right)\right\|^2+\left\|\Lambda^{-s}(a_+-a_-)\right\|^2\\
\lesssim&\left\|\Lambda^{-s}\{{\bf I-P}\}f\right\|^2_\sigma+\left\|\Lambda^{1-s}\{{\bf I-P}\}f\right\|^2_\sigma+\mathcal{\bar{E}}_{1,0}(t)\mathcal{\bar{D}}_{2,0}(t).
\end{aligned}
\end{equation}
This is (\ref{Lemma4.2-1}) and the proof of Lemma \ref{Lemma4.2} is complete.
\end{proof}
For the estimate on $\|\Lambda^{1-s}(E,B)\|^2$, we have
\begin{lemma}\label{lemma3.3}
Let $s\in [\frac12, \frac32)$, there exists an interactive functional $G_{E,B}(t)$ satisfying
\begin{equation}\label{G_{E,B}}
G_{E,B}(t)\lesssim \left\|\Lambda^{1-s}(f,E,B)\right\|^2+\left\|\Lambda^{-s}(f,E,B)\right\|^2+\|\Lambda^{2-s}E\|^2
\end{equation}
such that
\begin{eqnarray}\label{EB-s}
&&\frac{d}{dt}G_{E,B}(t)
+\left\|\Lambda^{1-s}(E,B)\right\|^2+\left\|\Lambda^{-s}E\right\|^2+\left\|\Lambda^{-s}(a_+-a_-)\right\|^2_{H^1}\nonumber\\
&\lesssim& \left\|\Lambda^{-s}\{{\bf I-P}\}f\right\|^2_\sigma+\left\|\Lambda^{1-s}\{{\bf I-P}\}f\right\|^2_\sigma
+\left\|\Lambda^{2-s}\{{\bf I-P}\}f\right\|^2_\sigma+\mathcal{\bar{E}}_{1,0}(t)\mathcal{\bar{D}}_{2,0}(t)\nonumber
\end{eqnarray}
holds for any $0\leq t\leq T$.
\end{lemma}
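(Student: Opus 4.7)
The plan is to build $G_{E,B}(t)$ as a linear combination of three interactive functionals, each designed to produce one block of the dissipation claimed on the left-hand side. The natural starting points are the system $(\ref{a_+-a_--original})$ for $(a_+-a_-,G)$ and the Maxwell subsystem built into $(\ref{f})$. Concretely I would take
$$G_{E,B}(t):=-\left(G,\Lambda^{-2s}E\right)-\kappa_{1}\left(G,\Lambda^{2-2s}E\right)-\kappa_{2}\left(E,\Lambda^{-2s}\nabla_x\times B\right),$$
with parameters $0<\kappa_{2}\ll\kappa_{1}\ll 1$ to be tuned at the end. Throughout, the strategy mirrors the one used in Lemma \ref{Lemma4.2}, so I will focus on what is genuinely new.

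For the first two functionals I would differentiate in time, substitute $\partial_{t}G$ from $(\ref{a_+-a_--original})_{2}$, and integrate by parts using $\nabla_x\cdot E=a_{+}-a_{-}$. The explicit $-2E$ in $(\ref{a_+-a_--original})_{2}$ yields $2\|\Lambda^{-s}E\|^{2}$ (respectively $2\|\Lambda^{1-s}E\|^{2}$), while $\nabla_x(a_+-a_-)$ tested against $\Lambda^{-2s}E$ (respectively $\Lambda^{2-2s}E$) gives $\|\Lambda^{-s}(a_+-a_-)\|^{2}$ (respectively $\|\Lambda^{1-s}(a_+-a_-)\|^{2}$). The residual coupling $(G,\Lambda^{-2s}\nabla_x\times B)$ arising through $\partial_t E=\nabla_x\times B-G$ is handled by Cauchy-Schwarz and absorbed into $\varepsilon\|\Lambda^{1-s}B\|^{2}+C\|\Lambda^{-s}\{{\bf I-P}\}f\|_{\sigma}^{2}$. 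Its higher-order analogue $(G,\Lambda^{2-2s}\nabla_x\times B)$ requires the non-symmetric Cauchy-Schwarz split $\|\Lambda^{2-s}G\|\,\|\Lambda^{1-s}B\|\leq\eta\|\Lambda^{1-s}B\|^{2}+C_{\eta}\|\Lambda^{2-s}\{{\bf I-P}\}f\|_{\sigma}^{2}$, which is precisely the mechanism responsible for the $\|\Lambda^{2-s}\{{\bf I-P}\}f\|_{\sigma}^{2}$ term on the right-hand side of the lemma. The third, Maxwell-type functional, once differentiated using $\partial_t B=-\nabla_x\times E$ together with the identity $\nabla_x\times\nabla_x\times E=\nabla_x(\nabla_x\cdot E)-\Delta E$, generates $\|\Lambda^{1-s}B\|^{2}+\|\Lambda^{-s}(a_+-a_-)\|^{2}-\|\Lambda^{1-s}E\|^{2}$ plus the same kind of coupling with $G$; the unwanted negative $\|\Lambda^{1-s}E\|^{2}$ is swallowed by the $2\kappa_{1}\|\Lambda^{1-s}E\|^{2}$ supplied by the second functional, which forces the hierarchy $\kappa_{2}<2\kappa_{1}$ and, together with small $\eta,\varepsilon$, closes the estimate. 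The nonlinear source terms $E(a_{+}+a_{-})$, $2b\times B$ and $\langle[v,-v]\mu^{1/2},\Gamma(f,f)\rangle$ are estimated exactly as in Lemma \ref{Lemma4.2} and absorbed into $\mathcal{\bar{E}}_{1,0}(t)\mathcal{\bar{D}}_{2,0}(t)$.

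The bound $(\ref{G_{E,B}})$ on $G_{E,B}(t)$ itself is a routine Cauchy-Schwarz in $\xi$ for each piece: $|(G,\Lambda^{-2s}E)|\lesssim\|\Lambda^{-s}(f,E)\|^{2}$, $|(E,\Lambda^{-2s}\nabla_x\times B)|\lesssim\|\Lambda^{-s}E\|^{2}+\|\Lambda^{1-s}B\|^{2}$, and the deliberately loose split $|(G,\Lambda^{2-2s}E)|\leq\|\Lambda^{-s}G\|\,\|\Lambda^{2-s}E\|$ accounts for the explicit $\|\Lambda^{2-s}E\|^{2}$ in the stated upper bound. The main obstacle is the dispersive, non-dissipative character of the Maxwell pair: the transverse parts of $(E,B)$ have no intrinsic decay, so all dissipation for $(E,B)$ must be channelled through the current $G=\langle v\mu^{1/2},\{{\bf I-P}\}f\cdot q_{1}\rangle$. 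The Fourier bookkeeping for the cross-term at level $\Lambda^{2-2s}$ is exactly where the assumption $s\geq\tfrac12$ becomes indispensable: it ensures $2-s\leq\tfrac32$, so that $\|\Lambda^{2-s}\{{\bf I-P}\}f\|_{\sigma}^{2}$ can be supplied by the weighted energy dissipation $\mathcal{\bar{D}}_{N,\ell}(t)$ for $N$ as prescribed in Theorem \ref{Th1.1}, and it is also what forces the extra $\|\Lambda^{2-s}E\|^{2}$ in the upper bound on $G_{E,B}(t)$.
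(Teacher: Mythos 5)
Your proposal is correct and follows essentially the same route as the paper: the paper's proof also builds $G_{E,B}(t)$ from the three interactive pairings $-\mathfrak{R}\big(\hat{G}\,\big|\,\hat{E}|\xi|^{2-2s}\big)$, $-\mathfrak{R}\big(\hat{G}\,\big|\,\hat{E}|\xi|^{-2s}\big)$, and $-\kappa\,\mathfrak{R}\big(\hat{E}\,\big|\,i\xi\times\hat{B}|\xi|^{-2s}\big)$, uses $(\ref{a_+-a_--original})$ and the Maxwell equations in exactly the way you describe, and the $\left\|\Lambda^{2-s}\{{\bf I-P}\}f\right\|_\sigma^2$ term arises from the same high-weight couplings. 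The only differences (Fourier-side bookkeeping versus your physical-space $\Lambda$ formulation, coefficient $1$ rather than $\kappa_1$ on the high-order $E$-functional, and your retention of the extra $\left\|\Lambda^{-s}(a_+-a_-)\right\|^2$ from the double-curl identity) are cosmetic.
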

\begin{proof}
Since
\begin{equation}
\begin{split}
2\hat{E}=&\partial_t\hat{G}+i\xi(\hat{a}_+-\hat{a}_-)+i\xi\cdot A\left(\{{\bf I-P}\}\hat{f}\cdot q_1\right) -\mathcal{F}[E(a_++a_-)]\\
&-\mathcal{F}[2b\times B]-\left\langle [v,-v]\mu^{1/2},L\hat{f}+\mathcal{F}[\Gamma(f,f)]\right\rangle,
\end{split}
\end{equation}
we can deduce that
\begin{equation}
\begin{aligned}
&\left\|\Lambda^{1-s}E\right\|^2=\left(\hat{E}\ \Big|\  \hat{E}|\xi|^{2-2s}\right)\\
\lesssim&\left(\partial_t\hat{G}+i\xi(\hat{a}_+-\hat{a}_-)+i\xi\cdot A\left(\{{\bf I-P}\}\hat{f}\cdot q_1\right)\ \Big|\  \hat{E}|\xi|^{2-2s}\right)\\
&-\left(\mathcal{F}[E(a_++a_-)]+\mathcal{F}[2b\times B]+\left\langle [v,-v]\mu^{1/2},L\hat{f}+\mathcal{F}[\Gamma(f,f)]\right\rangle\ \Big|\  \hat{E}|\xi|^{2-2s}\right)\\
\lesssim&\frac{d}{dt}\left(\hat{G}\ \Big|\  \hat{E}|\xi|^{2-2s}\right)-\left(\hat{G}\ \Big|\  \partial_t\hat{E}|\xi|^{2-2s}\right)+\varepsilon\left\|\Lambda^{1-s}E\right\|^2-\left\|\Lambda^{1-s}(a_+-a_-)\right\|^2\\
&+\left\|\Lambda^{2-s}\{{\bf I-P}\}f\right\|^2_\sigma+\left\|\Lambda^{-s}\{{\bf I-P}\}f\right\|^2_\sigma+\left\|\Lambda^{1-s}\{{\bf I-P}\}f\right\|^2_\sigma+\mathcal{\bar{E}}_{1,0}(t)\mathcal{\bar{D}}_{2,0}(t)\\
\lesssim&\frac{d}{dt}\left(\hat{G}\ \Big|\  \hat{E}|\xi|^{2-2s}\right)-\left(\hat{G}\ \Big|\  (i\xi\times\hat{B}-\hat{G})|\xi|^{2-2s}\right)+\varepsilon\left\|\Lambda^{1-s}E\right\|^2
-\left\|\Lambda^{1-s}(a_+-a_-)\right\|^2\\
&+\left\|\Lambda^{2-s}\{{\bf I-P}\}f\right\|^2_\sigma+\left\|\Lambda^{-s}\{{\bf I-P}\}f\right\|^2_\sigma+\left\|\Lambda^{1-s}\{{\bf I-P}\}f\right\|^2_\sigma+\mathcal{\bar{E}}_{1,0}(t)\mathcal{\bar{D}}_{2,0}(t)\\
\lesssim&\frac{d}{dt}\left(\hat{G}\ \Big|\  \hat{E}|\xi|^{2-2s}\right)+\varepsilon\left\|\Lambda^{1-s}B\right\|^2
-\left\|\Lambda^{1-s}(a_+-a_-)\right\|^2+\varepsilon\left\|\Lambda^{1-s}E\right\|^2\\
&+\left\|\Lambda^{2-s}\{{\bf I-P}\}f\right\|^2_\sigma+\left\|\Lambda^{-s}\{{\bf I-P}\}f\right\|^2_\sigma+\left\|\Lambda^{1-s}\{{\bf I-P}\}f\right\|^2_\sigma+\mathcal{\bar{E}}_{1,0}(t)\mathcal{\bar{D}}_{2,0}(t).
\end{aligned}
\end{equation}
Consequently
\begin{eqnarray}\label{E-1-s}
&&-\frac{d}{dt}\mathfrak{R}\left(\hat{G}\ \Big|\  \hat{E}|\xi|^{2-2s}\right)+\left\|\Lambda^{1-s}E\right\|^2+\left\|\Lambda^{1-s}(a_+-a_-)\right\|^2\nonumber\\
&\lesssim&\varepsilon\left\|\Lambda^{1-s}B\right\|^2
+\left\|\Lambda^{2-s}\{{\bf I-P}\}f\right\|^2_\sigma+\left\|\Lambda^{-s}\{{\bf I-P}\}f\right\|^2_\sigma\\
&&+\left\|\Lambda^{1-s}\{{\bf I-P}\}f\right\|^2_\sigma+\mathcal{\bar{E}}_{1,0}(t)\mathcal{\bar{D}}_{2,0}(t).\nonumber
\end{eqnarray}
Similarly, it holds that
\begin{eqnarray}\label{E-s}
&&-\frac{d}{dt}\mathfrak{R}\left(\hat{G}\ \Big|\  \hat{E}|\xi|^{-2s}\right)+\left\|\Lambda^{-s}E\right\|^2+\left\|\Lambda^{-s}(a_+-a_-)\right\|^2\nonumber\\
&\lesssim&\varepsilon\left\|\Lambda^{1-s}B\right\|^2
+\left\|\Lambda^{-s}\{{\bf I-P}\}f\right\|^2_\sigma\\
&&+\left\|\Lambda^{1-s}\{{\bf I-P}\}f\right\|^2_\sigma+\mathcal{\bar{E}}_{1,0}(t)\mathcal{\bar{D}}_{2,0}(t).\nonumber
\end{eqnarray}
For $B$, it follows that
\begin{eqnarray}
\left\|\Lambda^{1-s}B\right\|^2&=&\left(\hat{B}\ \Big|\  \hat{B}|\xi|^{2-2s}\right)\nonumber\\
&=&\left(i\xi\times\hat{B}\ \Big|\  i\xi\times\hat{B}|\xi|^{-2s}\right)\nonumber\\
&=&\left(\partial_t\hat{E}+\hat{G}\ \Big|\  i\xi\times\hat{B}|\xi|^{-2s}\right)\nonumber\\
&=&\frac{d}{dt}\left(\hat{E}\ \Big|\  i\xi\times\hat{B}|\xi|^{-2s}\right)-\left(\hat{E}\ \Big|\  i\xi\times\partial_t\hat{B}|\xi|^{-2s}\right)
+\left(\hat{G}\ \Big|\  i\xi\times\hat{B}|\xi|^{-2s}\right)\\
&=&\frac{d}{dt}\left(\hat{E}\ \Big|\  i\xi\times\hat{B}|\xi|^{-2s}\right)-\left(\hat{E}\ \Big|\  \xi\times(\xi\times\hat{E})|\xi|^{-2s}\right)
+\left(\hat{G}\ \Big|\  i\xi\times\hat{B}|\xi|^{-2s}\right)\nonumber\\
& \lesssim&\frac{d}{dt}\left(\hat{E}\ \Big|\  i\xi\times\hat{B}|\xi|^{-2s}\right)+\left\|\Lambda^{1-s}E\right\|^2
 +\varepsilon\left\|\Lambda^{1-s}B\right\|^2+\left\|\Lambda^{-s}\{{{\bf I-P}}\}f\right\|^2_\sigma.\nonumber
\end{eqnarray}
That is
\begin{equation}\label{B-s}
\begin{aligned}
-\frac{d}{dt}\mathfrak{R}\left(\hat{E}\ \Big|\  i\xi\times\hat{B}|\xi|^{-2s}\right)+\left\|\Lambda^{1-s}B\right\|^2
\lesssim\left\|\Lambda^{1-s}E\right\|^2+\varepsilon\left\|\Lambda^{1-s}B\right\|^2+\left\|\Lambda^{-s}\{{\bf I-P}\}f\right\|^2_\sigma.
\end{aligned}
\end{equation}
For sufficiently small $\kappa>0$, (\ref{E-1-s})$+$(\ref{E-s})$+\kappa$(\ref{B-s}) gives
\begin{equation}\label{EB-s-2}
\begin{aligned}
&\frac{d}{dt}\mathfrak{R}G_{E,B}(t)+\left\|\Lambda^{1-s}(E,B)\right\|^2+\left\|\Lambda^{-s}(a_+-a_-)\right\|^2_{H^1}\\
\lesssim&\left\|\Lambda^{-s}\{{\bf I-P}\}f\right\|^2_\sigma
+\left\|\Lambda^{1-s}\{{\bf I-P}\}f\right\|^2_\sigma+\left\|\Lambda^{2-s}\{{\bf I-P}\}f\right\|^2_\sigma+\mathcal{\bar{E}}_{1,0}(t)\mathcal{\bar{D}}_{2,0}(t).
\end{aligned}
\end{equation}
Here we set
\[
G_{E,B}(t)=-\left\{\left(\hat{G}\ \Big|\  \hat{E}|\xi|^{2-2s}\right)+\left(\hat{G}\ \Big|\  \hat{E}|\xi|^{2s}\right)+\kappa\left(\hat{E}\ \Big|\  i\xi\times\hat{B}|\xi|^{-2s}\right)\right\},
\]
one can deduce (\ref{EB-s}) from (\ref{EB-s-2}) and it is easy to see that $G_{E,B}(t)$ satisfies (\ref{G_{E,B}}). This completes the proof of Lemma \ref{lemma3.3}.
\end{proof}
To obtain the temporal decay estimates on the energy functional $\mathcal{E}^{k}_{N_0}(t)$,  our trick is trying to deduce an inequality of the following type
\begin{equation*}\label{Lemma4.5-1}
\frac{d}{dt}\mathcal{E}^k_{N_0}(t)+\mathcal{D}^k_{N_0}(t)\leq 0
\end{equation*}
for all $0\leq t\leq T$. Notice that although the energy functional $\mathcal{E}^k_{N_0}(t)$ contains only the derivative of the solutions of the VML system \eqref{f}, the dissipative estimate \eqref{coercive} of the linearized operator $L$ implies that the corresponding energy dissipation rate functional $\mathcal{D}^k_{N_0}(t)$ does contain the first order derivative of the microscopic component of the solutions of the VML system \eqref{f} with respect to $v$, which can be used to control the nonlinear terms related to the Lorenz force suitably. These facts will be presented in the following two lemmas:
\begin{lemma}\label{lemma4.3} Let $N_0\geq 3$, then there exists a positive constant $\hat{l}$ which depends only on $N_0$ and $\gamma$ such that the following estimates hold:
\begin{itemize}
\item[(i).] For $k=0,1,\cdots,N_0-2$, it holds that
\begin{equation}\label{Lemma4.3-1}
\begin{aligned}
&\frac{d}{dt}\left(\left\|\nabla^kf\right\|^2+\left\|\nabla^k(E,B)\right\|^2\right)+\left\|\nabla^k\{{\bf I-P}\}f\right\|^2_{\sigma}\\
\lesssim&\mathcal{\bar{E}}_{N_0,l'}(t)\left(\left\|\nabla^{k+1}(E,B)\right\|^2+\left\|\nabla^{k+1}f\right\|^2_\sigma\right)+\varepsilon\left\|\nabla^{k+1}f\right\|^2_\sigma.
\end{aligned}
\end{equation}
\item [(ii).] If $k=N_0-1$, it holds that
\begin{equation}\label{Lemma4.3-2}
\begin{aligned}
&\frac{d}{dt}\left(\left\|\nabla^{N_0-1}f\right\|^2+\left\|\nabla^{N_0-1}(E,B)\right\|^2\right)+\left\|\nabla^{N_0-1}\{{\bf I-P}\}f\right\|_\sigma^2\\
\lesssim&\mathcal{\bar{E}}_{N_0,l'}(t)\left(\left\|\nabla^{N_0-1}(E,B)\right\|^2+\left\|\nabla^{N_0-1}f\right\|^2_\sigma\right)
+\varepsilon\left\|\nabla^{N_0-1}f\right\|_\sigma^2.
\end{aligned}
\end{equation}
\item[(iii).] For $k=N_0\geq 3$, it follows that
\begin{equation}\label{Lemma4.3-3}
\begin{aligned}
&\frac{d}{dt}\left(\left\|\nabla^{N_0}f\right\|^2+\left\|\nabla^{N_0}(E,B)\right\|^2\right)+\left\|\nabla^{N_0}\{{\bf I-P}\}f\right\|^2_{\sigma}\\
\lesssim&\max\left\{\mathcal{E}_{N}(t),\mathcal{\bar{E}}_{N_0,l'}(t)\right\}\left(\left\|\nabla^{N_0-1}(E,B)\right\|^2
+\left\|\nabla^{N_0-1}f\right\|_\sigma^2+\left\|\nabla^{N_0}f\right\|_\sigma^2\right)+\varepsilon\left\|\nabla^{N_0}f\right\|_\sigma^2.
\end{aligned}
\end{equation}
holds provided that $N>\frac{5(N_0-1)}{3}$.
Here we choose $l'\geq \max\left\{\frac{\hat{l}}{-(\gamma+2)},\frac{\gamma-2}{2(\gamma+2)}\right\}$ with $\hat{l}=\max\{\hat{l}_1,\hat{l}_2,\cdots,\hat{l}_{11}\}$, where $\hat{l}_i(i=1,\cdots,11)$ will be specified in the  proof of this lemma.
\end{itemize}
\end{lemma}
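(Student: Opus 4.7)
The plan is to apply $\partial^\alpha$ with $|\alpha|=k$ to the reformulated equation (\ref{f-sgn}) and to the Maxwell system, take the $L^2_{x,v}$ inner product of the former with $\partial^\alpha f$, pair it with the $L^2_x$ inner products of $\partial^\alpha E$ and $\partial^\alpha B$ against the corresponding Maxwell equations, and sum. The crucial cancellation is that the contribution of the linear source $-E\cdot v\,\mu^{1/2}q_1$ tested against $\partial^\alpha f$ is exactly balanced, after integration by parts against $\partial^\alpha E$ in the Amp\`ere law, by the current $-\int v\mu^{1/2}(f_+-f_-)\,dv$. What remains on the left is, via the coercivity (\ref{coercive}), the dissipation $\|\nabla^k\{{\bf I-P}\}f\|_\sigma^2$, while the right-hand side consists of three families of nonlinear terms: $\partial^\alpha(q_0(E+v\times B)\cdot\nabla_v f)$, $\partial^\alpha(\tfrac{q_0}{2}E\cdot v\,f)$ and $\partial^\alpha\Gamma(f,f)$.

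For parts (i) and (ii), each nonlinear contribution is expanded by Leibniz and estimated by putting the low-derivative factor in $L^\infty_x$ and the high-derivative factor in $L^2_{x,v}$ (or the reverse), using Lemma \ref{lemma2.2} and Corollary \ref{corrollary}. The collision term is handled by Lemma \ref{Lemma2.1}(ii) with $l=0$, which absorbs one gradient into a $|\cdot|_\sigma$-norm; the resulting bound contains a factor of $\mathcal{\bar E}_{N_0,l'}(t)^{1/2}$ multiplied by $\|\nabla^{k+1}f\|_\sigma$. The Lorenz terms contain the $v\times B$ piece whose velocity growth is only controlled by $\langle v\rangle$; one writes $\langle v\rangle=\langle v\rangle^{1+\gamma/2}\cdot\langle v\rangle^{-\gamma/2}$ so that the first factor is absorbed by the Landau dissipation in $|\cdot|_\sigma$ while the second is paid for by the weight $w_{l'}$, at the cost of a constant $\hat l_i$ in the exponent. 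The same device handles $E\cdot v\,f$. The outputs are (\ref{Lemma4.3-1}) and (\ref{Lemma4.3-2}).

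The main obstacle is case (iii), $k=N_0$. When all $N_0$ derivatives in the Lorenz term fall on $f$, one cannot afford to put $f$ in $L^2$ and $(E,B)$ in $L^\infty_x$ via Sobolev $H^{N_0}$ alone, since that would eat the top-order norm. Instead I would use Gagliardo--Nirenberg to bound $\|(E,B)\|_{L^\infty_x}$ by $\|\nabla^{N_0}(E,B)\|^{\theta}\|\nabla^{m}(E,B)\|^{1-\theta}$ for some $m\leq N$; working out the scaling shows this is affordable precisely when $N>\tfrac{5(N_0-1)}{3}$, which is exactly the hypothesis. With this split, the resulting bound has $\max\{\mathcal E_N(t),\mathcal{\bar E}_{N_0,l'}(t)\}$ as prefactor multiplying $\|\nabla^{N_0-1}(E,B)\|^2+\|\nabla^{N_0-1}f\|_\sigma^2+\|\nabla^{N_0}f\|_\sigma^2$, as stated in (\ref{Lemma4.3-3}). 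The symmetric case where all derivatives fall on $(E+v\times B)$ is handled the same way with the roles of the two factors exchanged, and the intermediate Leibniz terms are controlled by simpler $L^3_xL^6_x$ splittings.

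Finally, the weight exponent $l'$ is determined by two requirements gathered along the way: it must exceed each of the finitely many thresholds $\hat l_i/(-(\gamma+2))$ arising from the $\langle v\rangle^{-\gamma/2}$ factors in the Lorenz and $E\cdot v\,f$ estimates, and it must also exceed $\frac{\gamma-2}{2(\gamma+2)}$ so that the weighted energy used here dominates the one appearing in Lemma \ref{Lemma4.1}. Setting $\hat l=\max_i\hat l_i$ and choosing $l'\geq\max\{\hat l/(-(\gamma+2)),\frac{\gamma-2}{2(\gamma+2)}\}$ completes the argument.
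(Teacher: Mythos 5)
Your setup (the $\partial^\alpha$ energy identity with the cancellation between $-E\cdot v\mu^{1/2}q_1$ and the current in Amp\`ere's law, coercivity of $L$, Leibniz expansion plus Lemma \ref{lemma2.2}/Corollary \ref{corrollary}, and the velocity-weight splitting that generates the thresholds $\hat l_i$) is the same route the paper takes for parts (i) and (ii). One point you leave implicit but which is essential there: the stated right-hand sides of (\ref{Lemma4.3-1})--(\ref{Lemma4.3-2}) contain only $(k+1)$-order (resp. $(N_0-1)$-order) norms, and a plain ``low factor in $L^\infty_x$, high factor in $L^2$'' splitting leaves behind $k$-th and lower-order field norms (e.g. $\|\nabla^k B\|$) that are not in $\mathcal{D}^k_{N_0}$; the paper avoids this by systematically interpolating the low-derivative field factor between the negative norm $\|\Lambda^{-s}E\|$ (or $\|\Lambda^{-1/2}E\|$) and $\|\nabla^{k+1}E\|$, with the velocity-weight exponents $\hat l_i$ chosen so that the total power of dissipation-type factors is exactly two. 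This bookkeeping, not just the crude split $\langle v\rangle=\langle v\rangle^{1+\gamma/2}\langle v\rangle^{-\gamma/2}$, is what produces the specific $\hat l_1,\dots,\hat l_9$.

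The genuine gap is in part (iii). You locate the difficulty in the term where all $N_0$ derivatives fall on $f$ and propose to handle it by interpolating $\|(E,B)\|_{L^\infty_x}$ between $\nabla^{N_0}(E,B)$ and $\nabla^m(E,B)$, claiming this yields $N>\frac{5(N_0-1)}{3}$, and you then call the case where all derivatives fall on the field ``symmetric''. This is backwards. The term $\left(E\cdot v\,\nabla^{N_0}f,\nabla^{N_0}f\right)$ is the easy one: the paper bounds $\|E\|_{L^\infty}$ by $\|\Lambda^{-s}E\|^{1-\theta}\|\nabla^{N_0-1}E\|^{\theta}$, which stays inside the dissipation $\mathcal{D}^{N_0-1}_{N_0}$ and requires no largeness of $N$; interpolating instead through $\|\nabla^{N_0}E\|$, as you propose, is counterproductive because $\nabla^{N_0}(E,B)$ is precisely the regularity-loss quantity absent from every dissipation functional, so the resulting bound cannot be organized as $(\text{small energy})\times(\text{dissipation})+\varepsilon\,(\text{dissipation})$. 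The term that actually forces $N>\frac{5(N_0-1)}{3}$ is the opposite extreme, $I_{13,4}=\left(\nabla^{N_0}E\cdot v f,\nabla^{N_0}f\right)$, where all derivatives land on $E$: there one must interpolate $\|\nabla^{N_0}E\|\lesssim\|\nabla^{N_0-1}E\|^{\frac{n}{n+1}}\|\nabla^{N_0+n}E\|^{\frac1{n+1}}$ with $N=N_0+n$, and simultaneously extract from the $f$-factor a matching power $\|\langle v\rangle^{\frac{\gamma+2}2}\nabla^{N_0-1}f\|^{\frac1{n+1}}$ via a velocity-weight interpolation (this is where $\hat l_{10}$, $\hat l_{11}$ and the constraint on $n$, hence on $N$, come from), so that after Young's inequality the prefactor is $\max\{\mathcal{E}_N,\bar{\mathcal{E}}_{N_0,l'}\}$ and every remaining factor lies in the admissible dissipation. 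The two extreme cases are therefore not interchangeable, and your sketch as written does not produce (\ref{Lemma4.3-3}) for the term that matters.
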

\begin{proof} For the case $k=0$, we have by multiplying $(\ref{f})$ by $f$ and integrating the resulting identity with respect to $x$ and $v$ over $\mathbb{R}_x^3\times\mathbb{R}_v^3$ that
\begin{equation}\label{k-0}
\begin{aligned}
\frac{d}{dt}\left(\|f\|^2+\|(E,B)\|^2\right)+\|\{{\bf I-P}\}f\|^2_{\sigma}\lesssim|(v\cdot E f,f)|+|({\Gamma}(f,f),\{{\bf I-P}\}f)|
\end{aligned}
\end{equation}
For the first term on the right hand of (\ref{k-0}), we obtain from Lemmas \ref{lemma2.2} to \ref{lemma2.4} and Corollary \ref{corrollary} that
\begin{equation*}
\begin{aligned}
|(v\cdot E f,f)|\lesssim&|(v\cdot E f,{\bf P}f)|+|(v\cdot E {\bf P}f,\{{\bf I-P}\}f)|+|(v\cdot E \{{\bf I-P}\}f,\{{\bf I-P}\}f)|\\
\lesssim&\|E\|_{L^2_x}\left\|\mu^{\delta}f\right\|_{L^3_xL^2_v}\left\|\mu^{\delta}f\right\|_{L^6_xL^2_v}
+\|E\|_{L^6_x}\left\|\langle v\rangle^{-\frac{\gamma}2}\{{\bf I-P}\}f\right\|_{L^3_xL^2_v}
\left\|\langle v\rangle^{\frac{\gamma+2}2}\{{\bf I-P}\}f\right\|_{L^2_xL^2_v}\\
\lesssim&\|E\|\left\|\mu^{\delta}f\right\|_{L^2_vL^3_x}\left\|\mu^{\delta}f\right\|_{L^2_vL^6_x}
+\|E\|_{L^2_vL^6_x}\left\|\langle v\rangle^{-\frac{\gamma}2}\{{\bf I-P}\}f\right\|_{L^2_vL^3_x}
\left\|\{{\bf I-P}\}f\right\|_\sigma\\
\lesssim&\left\|\Lambda^{-\frac12}E\right\|^{\frac 23}\left\|\nabla_xE\right\|^{\frac 13}
\left\|\Lambda^{-\frac12}f\right\|^{\frac 13}\left\|\nabla_xf\right\|_\sigma^{\frac 23} \left\|\nabla_xf\right\|_{\sigma}+\|\nabla_xE\|\left\|\langle v\rangle^{-\frac{\gamma}2}\{{\bf I-P}\}f\right\|_{H^1_x}
\|\{{\bf I-P}\}f\|_\sigma\\
\lesssim&\mathcal{\bar{E}}_{1,\frac{\gamma}{\gamma+2}}(t)\left(\left\|\nabla_xE\right\|^2+\left\|\nabla_xf\right\|^2_{\sigma}\right)
+\varepsilon\left(\left\|\nabla_xf\right\|^2_{\sigma}+\left\|\{{\bf I-P}\}f\right\|^2_{\sigma}\right).
\end{aligned}
\end{equation*}
The other term can be estimated as follows
\begin{equation*}
\begin{aligned}
|({\bf\Gamma}(f,f),\{{\bf I-P}\}f)|\lesssim&\varepsilon\|\{{\bf I-P}\}f\|^2_{\sigma}
+\left\|\left|\mu^\delta f\right|_{L^2_v}|f|_{L^2_\sigma}\right\|^2\\
\lesssim&\varepsilon\|\{{\bf I-P}\}f\|^2_{\sigma}+\left\|\mu^\delta f\right\|^2_{L^3_x}\|f\|^2_{L^6_\sigma}\\
\lesssim&\varepsilon\|\{{\bf I-P}\}f\|^2_{\sigma}+\left\|\mu^\delta f\right\|^2_{H^1_x}\|\nabla_xf\|^2_\sigma.
\end{aligned}
\end{equation*}
Collecting the above estimates gives
\begin{equation}\label{0}
\begin{aligned}
\frac{d}{dt}\left(\|f\|^2+\|(E,B)\|^2\right)+\|\{{\bf I-P}\}f\|^2_{\sigma}\lesssim&\mathcal{\bar{E}}_{1,\frac{\gamma}{\gamma+2}}(t)\left(\left\|\nabla_xE\right\|^2
+\left\|\nabla_xf\right\|^2_{\sigma}\right)+\varepsilon\|\nabla_xf\|^2_{\sigma},
\end{aligned}
\end{equation}
which gives (\ref{Lemma4.3-1}) with $k=0$.

For $k=1,2,\cdots,N-2$, we have by applying $\nabla^k$ to $(\ref{f})$,
multiplying the resulting identity by $\nabla^kf$, and then integrating the final result with respect to $x$ and $v$ over $\mathbb{R}_x^3\times\mathbb{R}_v^3$ that
\begin{equation}\label{k}
\begin{aligned}
\frac{d}{dt}&\left(\left\|\nabla^kf\right\|^2+\left\|\nabla^k(E,B)\right\|^2\right)+\left\|\nabla^k\{{\bf I-P}\}f\right\|^2_{\sigma}\lesssim\underbrace{\left|\left(\nabla^k(v\cdot Ef),\nabla^kf\right)\right|}_{I_5}\\
&+\underbrace{\left|\left(\nabla^k(E\cdot\nabla_vf),\nabla^kf\right)\right|}_{I_6}
+\underbrace{\left|\left(\nabla^k\left((v\times B)\cdot\nabla_vf\right),\nabla^kf\right)\right|}_{I_7}
+\underbrace{\left|\left(\nabla^k{\bf\Gamma}(f,f),\nabla^kf\right)\right|}_{I_8}.
\end{aligned}
\end{equation}
By applying the macro-micro decomposition \eqref{macro-micro}, one can deduce that
\begin{eqnarray}\label{k-E}
I_5&\lesssim&\underbrace{\left|\left(\nabla^k(v\cdot Ef),\nabla^k{\bf P}f\right)\right|}_{I_{5,1}}
+\underbrace{\left|\left(\nabla^k(v\cdot E{\bf P}f),\nabla^k{\{\bf I-P\}}f\right)\right|}_{I_{5,2}}\\
&&+\underbrace{\left|\left(\nabla^k(v\cdot E\{{\bf I-P}\}f),\nabla^k{\{\bf I-P\}}f\right)\right|}_{I_{5,3}}.\nonumber
\end{eqnarray}
Applying Lemma \ref{lemma2.2} and Corollary \ref{corrollary}, $I_{5,1}$ can be dominated by
\begin{equation*}
\begin{aligned}
I_{5,1}
\lesssim&\sum_{j\leq k-1}\left\|\left(\nabla^jE \nabla^{k-1-j}\left(\mu^{\delta}f\right)\right)\right\|\left\|\nabla^{k+1}\left(\mu^{\delta}f\right)\right\|\\
\lesssim&\sum_{j\leq k-1}\left\|\nabla^jE\right\|_{L^6_x}
\left\|\nabla^{k-1-j}\left(\mu^{\delta}f\right)\right\|_{L^3_xL^2_v}\left\|\nabla^{k+1}\left(\mu^{\delta}f\right)\right\|\\
\lesssim&\sum_{j\leq k-1}\left\|\Lambda^{-\frac 12}E\right\|^{\frac{2k-2j}{2k+3}} \left\|\nabla^{k+1}E\right\|^{\frac{2j+3}{2k+3}}
\left\|\Lambda^{-\frac 12}\left(\mu^{\delta}f\right)\right\|^{\frac{2j+3}{2k+3}} \left\|\nabla^{k+1}\left(\mu^{\delta}f\right)\right\|^{\frac{2k-2j}{2k+3}}
\left\|\nabla^{k+1}\left(\mu^{\delta}f\right)\right\|\\
\lesssim&\mathcal{\bar{E}}_{0,0}(t)\left(\left\|\nabla^{k+1}E\right\|^2+\left\|\nabla^{k+1}f\right\|_\sigma^2\right)
+\varepsilon\left\|\nabla^{k+1}f\right\|_\sigma^2.
\end{aligned}
\end{equation*}
Similarly, for $I_{5,2}$, we can get that
\[
I_{5,2}\lesssim
\mathcal{\bar{E}}_{0,0}(t)\left(\left\|\nabla^{k+1}E\right\|^2+\left\|\nabla^{k+1}f\right\|_\sigma^2\right)
+\varepsilon\left\|\nabla^{k+1}f\right\|_\sigma^2.
\]
For $I_{5,3}$, it holds from the Cauchy inequality and the Holder inequality that
\begin{equation*}
\begin{aligned}
I_{5,3}
\lesssim&\sum_{j\leq k}\left|\left(\nabla^jE v \nabla^{k-j}\{{\bf I-P}\}f,\nabla^k{\{\bf I-P\}}f\right)\right|\\
\lesssim&\sum_{j\leq k}\left\|\nabla^jE  \nabla^{k-j}\{{\bf I-P}\}f\langle v\rangle^{-\frac \gamma2}\right\|
\left\|\nabla^k\{{\bf I-P}\}f\langle v\rangle^{\frac {\gamma+2}{2}}\right\|\\
\lesssim&\underbrace{\sum_{j\leq k}\left\|\nabla^jE  \nabla^{k-j}\{{\bf I-P}\}f\langle v\rangle^{-\frac \gamma2}\right\|^2}_{I_{5,3,1}}+\varepsilon\left\|\nabla^k\{{\bf I-P}\}f\right\|^2_\sigma.
\end{aligned}
\end{equation*}
To deal with $I_{5,3,1}$, when $j=0$, we have from Lemma \ref{lemma2.2} and Corollary \ref{corrollary} that
\begin{equation*}
\begin{aligned}
 I_{5,3,1}\lesssim&\left(\|E\|_{L^\infty_x}\left\|\nabla^{k}\{{\bf I-P}\}f\langle v\rangle^{-\frac \gamma2}\right\|\right)^2\\
 \lesssim&\left(\left\|\Lambda^{-s}E\right\|^{1-\theta}\left\|\nabla^{k+1}E\right\|^{\theta}
 \left\|\nabla^{k}\{{\bf I-P}\}f\langle v\rangle^{\frac {\gamma+2}2}\right\|^{1-\theta}
 \left\|\nabla^{k}\{{\bf I-P}\}f\langle v\rangle^{\hat{l}_1}\right\|^{\theta}\right)^2\\
 \lesssim& \mathcal{\bar{E}}_{k,-\frac{\hat{l}_1}{\gamma+2}}(t)\left(\left\|\nabla^{k+1}E\right\|^2
 +\left\|\nabla^{k}\{{\bf I-P}\}f\right\|^2_\sigma\right).
\end{aligned}
\end{equation*}
Here $\theta=\frac{3+2s}{2(k+1+s)}$ and
$\hat{l}_1=-\frac{\gamma+1}{\theta}+\frac{\gamma+2}{2}=-\frac{2(\gamma+1)(k+1+s)}{3+2s}+\frac{\gamma+2}{2}.$
While for the case $j\neq0$, we can deduce from Lemma \ref{lemma2.2} and Corollary \ref{corrollary} that
\begin{equation*}
\begin{aligned}
 I_{5,3,1}\lesssim&\sum_{1\leq j\leq k}\left(\left\|\nabla^jE\right\|_{L^6_x}\left\|\nabla^{k-j}\{{\bf I-P}\}f\langle v\rangle^{-\frac \gamma2}\right\|_{L^2_vL^3_x}\right)^2\\
 \lesssim&\sum_{1\leq j\leq k}\left(\left\|\Lambda^{-s}E\right\|^{1-\theta}\left\|\nabla^{k+1}E\right\|^{\theta}
 \left\|\{{\bf I-P}\}f\langle v\rangle^{-\frac \gamma2}\right\|^{1-\alpha}\left\|\nabla^{k}\{{\bf I-P}\}f\langle v\rangle^{-\frac \gamma2}\right\|^{\alpha}\right)^2\\
  \lesssim&\sum_{1\leq j\leq k}\left(\left\|\Lambda^{-s}E\right\|^{1-\theta}\left\|\nabla^{k+1}E\right\|^{\theta}
 \left\|\{{\bf I-P}\}f\langle v\rangle^{-\frac \gamma2}\right\|^{1-\alpha}\right.\\
 &\quad\times\left.\left(\left\|\nabla^{k}\{{\bf I-P}\}f\langle v\rangle^{\hat{l}_2}\right\|^{1-\beta}\left\|\nabla^{k}\{{\bf I-P}\}f\langle v\rangle^{\frac {\gamma+2}2}\right\|^\beta\right)^{\alpha}\right)^2\\[2mm]
  \lesssim&\sum_{1\leq j\leq k}\left(\left\|\Lambda^{-s}E\right\|^{1-\theta}
 \left\|\{{\bf I-P}\}f\langle v\rangle^{-\frac \gamma2}\right\|^{1-\alpha}\left\|\nabla^{k}\{{\bf I-P}\}f\langle v\rangle^{\hat{l}_2}\right\|^{\alpha(1-\beta)}\right.\\
 &\quad\left.\times\left\|\nabla^{k}\{{\bf I-P}\}f\langle v\rangle^{\frac {\gamma+2}2}\right\|^{{\alpha}\beta}\left\|\nabla^{k+1}E\right\|^{\theta}\right)^2\\
 \lesssim& \mathcal{\bar{E}}_{k,-\frac{\hat{l}_2}{\gamma+2}}(t)\left(\left\|\nabla^{k+1}E\right\|^2+\left\|\nabla^{k}\{{\bf I-P}\}f\right\|^2_\sigma\right).
\end{aligned}
\end{equation*}
Here we can deduce from Lemma \ref{lemma2.2} that $\theta=\frac{j+s+1}{k+1+s},\ \alpha=\frac{2k-2j+1}{2k},\ \frac{\gamma+2}2\cdot\beta+\hat{l}_2(1-\beta)=-\frac\gamma 2.$
If we set $\alpha\beta=1-\theta$,  then we can get that $\hat{l}_2=\frac{-\frac\gamma 2-\frac{\gamma+2}2\cdot\beta}{1-\beta}=\frac{\gamma+2}{2}-\frac{\gamma+1}{1-\beta}=\frac{\gamma+2}{2}-\frac{(\gamma+1)(2k-2j+1)(k+1+s)}{(2k-2j+1)(k+1+s)-2k(k-j)} $ with $1\leq j\leq k$.

Consequently
$$
I_5\lesssim\mathcal{\bar{E}}_{k,-\frac{\max\{\hat{l}_1,\hat{l}_2\}}{\gamma+2}}(t)\left(\left\|\nabla^{k+1}E\right\|^2+\left\|\nabla^{k+1}f\right\|_\sigma^2
+\left\|\nabla^{k}\{{\bf I-P}\}f\right\|^2_\sigma\right)+ \varepsilon\left(\left\|\nabla^{k+1}f\right\|_\sigma^2+\left\|\nabla^k\{{\bf I-P}\}f\right\|^2_\sigma\right).
$$
For the terms $I_6$ and $I_7$ on the right-hand side of (\ref{k}), by paying particular attention to special structure of the dissipation property \eqref{coercive} of the linearized operator $L$, we can deduce by repeating the argument used above to get that
\begin{equation*}
\begin{aligned}
I_6+I_7\lesssim\mathcal{\bar{E}}_{k,-\frac{\hat{l}_3}{\gamma+2}}(t)\left(\left\|\nabla^{k+1}(E,B)\right\|^2
+\left\|\nabla^{k+1}f\right\|_\sigma^2
+\left\|\nabla^{k}\{{\bf I-P}\}f\right\|^2_\sigma\right)+ \varepsilon\left(\left\|\nabla^{k+1}f\right\|_\sigma^2+\left\|\nabla^k\{{\bf I-P}\}f\right\|^2_\sigma\right).
\end{aligned}
\end{equation*}
Here $\hat{l}_3=\frac{\gamma+2}{2}-\frac{\gamma}{1-\beta}=\frac{\gamma+2}{2}-\frac{\gamma(2k-2j+1)(k+1+s)}{(2k-2j+1)(k+1+s)-2k(k-j)}$
with $1 \leq j\leq k$.

For the last term on the right-hand side of (\ref{k}), it follows from Lemma \ref{lemma2.2} that
\begin{equation}\label{Gamma-k}
\begin{aligned}
I_8
\lesssim& \sum_{j\leq k}\left\|\left|\nabla^j\mu^\delta f\right|_{L^2_v}\left|\nabla^{k-j}f\right|_{L^2_\sigma}\right\|\left\|\nabla^k\{{\bf I-P}\}f\right\|_\sigma\\
\lesssim&\sum_{j\leq k}\left\|\mu^\delta\nabla^j f\right\|_{L^3_xL^2_v}\left\|\nabla^{k-j}f\right\|_{L^6_xL^2_\sigma} \left\|\nabla^k\{{\bf I-P}\}f\right\|_\sigma\\
\lesssim&\sum_{j\leq k}\left\|\mu^\delta\nabla^j f\right\|_{L^2_vL^3_x}\left\|\nabla^{k-j}f\right\|_{L^2_\sigma L^6_x}\left\|\nabla^k\{{\bf I-P}\}f\right\|_\sigma\\
\lesssim&\sum_{j\leq k}\left\|\mu^\delta \nabla^mf\right\|^{\frac{j+1}{k+1}}\left\|\mu^\delta\nabla^{k+1} f\right\|^{1-\frac{j+1}{k+1}}\| f\|_\sigma^{1-\frac{j+1}{k+1}}\left\| \nabla^{k+1}f\right\|_\sigma^{\frac{j+1}{k+1}}\left\|\nabla^k\{{\bf I-P}\}f\right\|_\sigma\\
\lesssim&\max\left\{\mathcal{E}_{N_0-2}(t), \mathcal{E}_{1,1}(t)\right\} \left\|\nabla^{k+1} f\right\|_\sigma^2 +\varepsilon\left\|\nabla^k\{{\bf I-P}\}f\right\|_\sigma^2.
\end{aligned}
\end{equation}
Here $m=\frac{k+1}{2(j+1)}\leq\frac{k+1}{2}\leq \frac N2.$

Thus collecting the above estimates on $I_5$, $I_6$, $I_7$, and $I_8$, one can deduce (\ref{Lemma4.3-1}) immediately from (\ref{k}) and \eqref{0}.

When $k=N_0-1\geq 2$, (\ref{k}) tells us that
\begin{equation}\label{N_0-1}
\begin{aligned}
&\frac{d}{dt}\left(\left\|\nabla^{N_0-1}f\right\|^2+\left\|\nabla^{N_0-1}(E,B)\right\|^2\right)
+\left\|\nabla^{N_0-1}\{{\bf I-P}\}f\right\|^2_{\sigma}\\
\lesssim&\underbrace{\left|\left(\nabla^{N_0-1}(E\cdot vf),\nabla^{N_0-1}f\right)\right|}_{I_9}
+\underbrace{\left|\left(\nabla^{N_0-1}(E\cdot \nabla_vf),\nabla^{N_0-1}f\right)\right|}_{I_{10}}\\
&+\underbrace{\left|\left(\nabla^{N_0-1}(v\times B\cdot \nabla_vf),\nabla^{N_0-1}f\right)\right|}_{I_{11}}+\underbrace{\left|\left(\nabla^{N_0-1}{\Gamma}(f,f),\nabla^{N_0-1}f\right)\right|}_{I_{12}}.
\end{aligned}
\end{equation}
To estimate $I_9$, due to
\begin{eqnarray}\label{N_0-1-e}
I_9&\lesssim&\underbrace{\left|\left(E\cdot v\nabla^{N_0-1}f,\nabla^{N_0-1}f\right)\right|}_{I_{9,1}}
+\underbrace{\left|\left(\nabla^{N_0-1} E\cdot vf,\nabla^{N_0-1}f\right)\right|}_{I_{9,2}}\\
&&+\underbrace{\sum_{1\leq j\leq N_0-2}\left|\left(\nabla^{j} E\cdot v\nabla ^{N_0-1-j}f,\nabla^{N_0-1}f\right)\right|}_{I_{9,3}},\nonumber
\end{eqnarray}
we can get from Lemma \ref{lemma2.2} and Corollary \ref{corrollary} that $I_{9,1}$ can be estimated as follows
\begin{equation*}
\begin{aligned}
I_{9,1}
\lesssim&\|E\|_{L^\infty}\left\|\nabla^{N_0-1}f\langle v\rangle^{-\frac\gamma2}\right\|\left\|\nabla^{N_0-1}f\right\|_\sigma\\
\lesssim&\left\|\Lambda^{-s}E\right\|^{1-\theta}\left\|\nabla^{N_0-1}E\right\|^{\theta}\left\|\nabla^{N_0-1}f\langle v\rangle^{\hat{l}_4}\right\|^{\theta}\left\|\nabla^{N_0-1}f\langle v\rangle^{\frac{\gamma+2}{2}}\right\|^{1-\theta}\left\|\nabla^{N_0-1}f\right\|_\sigma\\
\lesssim&\mathcal{\bar{E}}_{N_0-1,-\frac{\hat{l}_4}{\gamma+2}}(t)\left(\left\|\nabla^{N_0-1}E\right\|^2+\left\|\nabla^{N_0-1}f\right\|_\sigma^2\right)
+\varepsilon\left\|\nabla^{N_0-1}f\right\|_\sigma^2,
\end{aligned}
\end{equation*}
where $\theta=\frac{3+2s}{2(N_0-1+s)}$ and $\hat{l}_4=\frac{\gamma+2}2-\frac{\gamma+1}{\theta}=\frac{\gamma+2}2-\frac{2(N_0-1+s)(\gamma+1)}{3+2s}.$

For $I_{9,2}$, we can deduce similarly that
\begin{equation*}
\begin{aligned}
I_{9,2}\lesssim&\left\|\nabla^{N_0-1} E\right\|\left\|\nabla f\langle v\rangle^{-\frac\gamma2}\right\|_{L^\infty}\left\|\nabla^{N_0-1}f\right\|_\sigma\\
\lesssim&\left\|\nabla^{N_0-1}E\right\|\left\|\nabla^2 f\langle v\rangle^{-\frac\gamma2}\right\|_{H^1} \left\|\nabla^{N_0-1}f\right\|_\sigma\\
\lesssim&\mathcal{\bar{E}}_{3,\frac{\gamma}{2(\gamma+2)}}(t)\left\|\nabla^{N_0-1}E\right\|^2+\varepsilon\left\|\nabla^{N_0-1}f\right\|_\sigma^2.
\end{aligned}
\end{equation*}
For $I_{9,3}$, we can get from Lemma \ref{lemma2.2} and Corollary \ref{corrollary} that
\begin{equation}\label{E_N_0-1}
\begin{aligned}
I_{9,3}\lesssim&\sum_{1\leq j\leq N_0-2}\left\|\nabla^{j} E\right\|_{L^6}\left\|\nabla ^{N_0-1-j}f\langle v\rangle^{-\frac\gamma2}\right\|_{L^3}\left\|\nabla^{N_0-1}f\right\|_{\sigma}\\
\lesssim&\sum_{1\leq j\leq N_0-2}\left\|\Lambda^{-s}E\right\|^{1-\theta}\left\|\nabla^{N_0-1}E\right\|^{\theta}
\left\|\nabla ^{N_0-1}f\langle v\rangle^{-\frac\gamma2}\right\|^\alpha
\left\|f\langle v\rangle^{-\frac\gamma2}\right\|^{1-\alpha}\left\|\nabla^{N_0-1}f\right\|_\sigma\\
\lesssim&\sum_{1\leq j\leq N_0-2}\left\|\Lambda^{-s}E\right\|^{1-\theta}\left\|\nabla^{N_0-1}E\right\|^{\theta} \left(\left\|\nabla^{N_0-1}f\langle v\rangle^{\hat{l}_5}\right\|^{1-\beta}\left\|\nabla^{N_0-1}f\langle v\rangle^{\frac{\gamma+2}{2}}\right\|^\beta\right)^{\alpha}
\left\|f\langle v\rangle^{-\frac\gamma2}\right\|^{1-\alpha}\left\|\nabla^{N_0-1}f\right\|_\sigma\\
\lesssim&\sum_{1\leq j\leq N_0-2}\left\|\Lambda^{-s}E\right\|^{1-\theta}
\left\|f\langle v\rangle^{-\frac\gamma2}\right\|^{1-\alpha}\left\|\nabla^{N_0-1}f\langle v\rangle^{\hat{l}_5}\right\|^{\alpha-\alpha\beta}\left\|\nabla^{N_0-1}E\right\|^{\theta}
\left\|\nabla^{N_0-1}f\langle v\rangle^{\frac{\gamma+2}{2}}\right\|^{\alpha\beta}
\left\|\nabla^{N_0-1}f\right\|_\sigma\\
\lesssim&\mathcal{\bar{E}}_{N_0-1,-\frac{\hat{l}_5}{\gamma+2}}(t)\left(\left\|\nabla^{N_0-1}E\right\|^2
+\left\|\nabla^{N_0-1}f\right\|_\sigma^2\right)+\varepsilon\left\|\nabla^{N_0-1}f\right\|_\sigma^2.
\end{aligned}
\end{equation}
Here $\theta=\frac{j+s+1}{N_0-1+s}$, $\alpha=\frac{2N_0-1-j}{2(N_0-1)}$, and $\frac{\gamma+2}2\cdot\beta+\hat{l}_5(1-\beta)=-\frac\gamma 2$. Thus if we set $\alpha\beta=1-\theta$, we can deduce that
$\hat{l}_5=\frac{-\frac{\gamma}2-\frac{\gamma+2}2\cdot\beta}{1-\beta}=\frac{\gamma+2}2-\frac{\gamma+1}{1-\beta}=\frac{\gamma+2}2-\frac{(\gamma+1)(N_0-1+s)(2N_0-1-j)}{(N_0-1+s)(2N_0-1-j)-2(N_0-1)(N_0-2-j)}$ with $1\leq j\leq N_0-2$.

Consequently the estimates on $I_{9,1}$, $I_{9,2}$, and $I_{9,3}$ tell us that
$$
I_9\lesssim \max\left\{\mathcal{\bar{E}}_{3,\frac{\gamma}{2(\gamma+2)}}(t),\mathcal{\bar{E}}_{N_0-1,-\frac{\max\{\hat{l}_4,\hat{l}_5\}}{\gamma+2}}(t)\right\}\left(\left\|\nabla^{N_0-1}E\right\|^2
+\left\|\nabla^{N_0-1}f\right\|_\sigma^2\right)+\varepsilon\left\|\nabla^{N_0-1}f\right\|_\sigma^2.
$$

Repeating the arguments used to deal with $I_9$, we can bound $I_{10}$ and $I_{11}$ by
$$
I_{10}+I_{11}\lesssim\max\left\{\mathcal{\bar{E}}_{3,\frac{\gamma-2}{2(\gamma+2)}}(t),\mathcal{\bar{E}}_{N_0-1,-\frac{\hat{l}_6}{\gamma+2}}(t)\right\}\left(\left\|\nabla^{N_0-1}(E,B)\right\|^2
+\left\|\nabla^{N_0-1}f\right\|_\sigma^2\right)+\varepsilon\left\|\nabla^{N_0-1}f\right\|_\sigma^2.
$$
Here $\hat{l}_6=\frac{-\frac{\gamma}2+1-\frac{\gamma+2}2\cdot\beta}{1-\beta}=\frac{\gamma+2}2-\frac{\gamma}{1-\beta}=\frac{\gamma+2}2-\frac{\gamma(N_0-1+s)(2N_0-1-j)}{(N_0-1+s)(2N_0-1-j)-2(N_0-1)(N_0-2-j)}$
with $1 \leq j\leq N_0-1$.

As to $I_{12}$, we have from Lemma \ref{Lemma2.1} and Lemma \ref{lemma2.2} that
\begin{equation}\label{Gamma_N_0-1}
\begin{aligned}
I_{12}\lesssim&
\left\|\left|\mu^\delta f\right|_{L^2_v}\left|\nabla^{N_0-1}f\right|_{L^2_\sigma}\right\|
\left\|\nabla^{N_0-1}\{{\bf I-P}\}f\right\|_\sigma+\left\|\left|\mu^\delta \nabla^{N_0-1}f\right|_{L^2_v}|f|_{L^2_\sigma}\right\|\left\|\nabla^{N_0-1}\{{\bf I-P}\}f\right\|_\sigma\\
&+\sum_{1\leq j\leq N_0-2}\left\|\left|\nabla^j\mu^\delta f\right|_{L^2_v} \left|\nabla^{N_0-1-j}f\right|_{L^2_\sigma}\right\|\left\|\nabla^{N_0-1}\{{\bf I-P}\}f\right\|_\sigma\\
\lesssim& \left\|\mu^\delta f\right\|_{L^\infty_xL^2_v}\left\|\nabla^{N_0-1}f\right\|_\sigma\left\|\nabla^{N_0-1}\{{\bf I-P}\}f\right\|_\sigma
+\left\|\mu^\delta\nabla^{N_0-1} f\right\|\|f\|_{L^\infty_xL^2_\sigma}\left\|\nabla^{N_0-1}\{{\bf I-P}\}f\right\|_\sigma\\
&+\sum_{1\leq j\leq N_0-2}\left\|\nabla^j\mu^\delta f\right\|_{L^3_xL^2_v} \left\|\nabla^{N_0-1-j}f\right\|_{L^6_xL^2_\sigma}\left\|\nabla^{N_0-1}\{{\bf I-P}\}f\right\|_\sigma\\
\lesssim&\max\left\{\mathcal{E}_{N_0}(t),\mathcal{E}_{1,1}(t)\right\}\left\|\nabla^{N_0-1}f\right\|_\sigma^2
+\varepsilon\left\|\nabla^{N_0-1}\{{\bf I-P}\}f\right\|^2_\sigma.
\end{aligned}
\end{equation}
Here in deducing the last inequality, we have used the following inequality
\begin{equation}
\begin{aligned}
&\sum_{1\leq j\leq N_0-2}\left\|\nabla^j\mu^\delta f\right\|_{L^3_xL^2_v} \left\|\nabla^{N_0-1-j}f\right\|_{L^6_xL^2_\sigma}\left\|\nabla^{N_0-1}\{{\bf I-P}\}f\right\|_\sigma\\
&\lesssim \sum_{1\leq j\leq N_0-2}\left\|\nabla^m\mu^\delta f\right\|^{1-\frac{j-1}{N_0-1}}\left\|\nabla^{N_0-1}\mu^\delta f\right\|^{\frac{j-1}{N_0-1}}\|f\|_\sigma^{\frac{j-1}{N_0-1}}\left\|\nabla^{N_0-1} f\right\|_\sigma^{1-\frac{j-1}{N_0-1}}\left\|\nabla^{N_0-1}\{{\bf I-P}\}f\right\|_\sigma\\
&\lesssim \max\left\{\mathcal{E}_{N_0}(t),\mathcal{E}_{1,1}(t)\right\}\left\|\nabla^{N_0-1}f\right\|_\sigma^2
+\varepsilon\left\|\nabla^{N_0-1}\{{\bf I-P}\}f\right\|^2_\sigma,
\end{aligned}
\end{equation}
where $m=\frac{3N_0-1}{2(N_0-j)}\leq \frac{3N_0}{4}.$

Thus collecting the above estimates on $I_9$, $I_{10}$, $I_{11}$, and $I_{12}$ gives (\ref{Lemma4.3-2}).

Now we turn to the case of  $k=N_0\geq 3$. To this end, we have from (\ref{k}) with $k=N_0$ that
\begin{equation}\label{N_0-f}
\begin{aligned}
&\frac{d}{dt}\left(\left\|\nabla^{N_0}f\right\|^2+\left\|\nabla^{N_0}E\right\|^2\right)+\left\|\nabla^{N_0}\{{\bf I-P}\}f\right\|^2_{\sigma}\\
\lesssim&\underbrace{\left|\left(\nabla^{N_0}(E\cdot vf),\nabla^{N_0}f\right)\right|}_{I_{13}}
+\underbrace{\left|\left(\nabla^{N_0}(E\cdot \nabla_vf),\nabla^{N_0}f\right)\right|}_{I_{14}}\\
&+\underbrace{\left|\left(\nabla^{N_0}((v\times B)\cdot \nabla_vf),\nabla^{N_0}f\right)\right|}_{I_{15}}
+\underbrace{\left|\left(\nabla^{N_0}{\Gamma}(f,f),\nabla^{N_0}f\right)\right|}_{I_{16}}.
\end{aligned}
\end{equation}
Now we turn to estimate $I_j (j=13, 14,15, 16)$ term by term. For $I_{13}$, noticing that
\begin{eqnarray}\label{N_0-E}
I_{13}&\lesssim&\underbrace{\left|\left(E\cdot v\nabla^{N_0}f,\nabla^{N_0}f\right)\right|}_{I_{13,1}}
+\underbrace{\left|\left(\nabla E\cdot v\nabla^{N_0-1}f,\nabla^{N_0}f\right)\right|}_{I_{13,2}}\nonumber\\
&&+\underbrace{\left|\left(\nabla^{N_0-1} E\cdot v\nabla f,\nabla^{N_0}f\right)\right|}_{I_{13,3}}
+\underbrace{\left|\left(\nabla^{N_0} E\cdot vf,\nabla^{N_0}f\right)\right|}_{I_{13,4}}\nonumber\\
&&+\underbrace{\sum_{2\leq j\leq N_0-2}\left|\left(\nabla^{j} E\cdot v\nabla ^{N_0-j}f,\nabla^{N_0}f\right)\right|}_{I_{13,5}},
\end{eqnarray}
we have from the Holder inequality and Lemma \ref{lemma2.2} and Corollary \ref{corrollary} that
\begin{equation*}
\begin{aligned}
I_{13,1}\lesssim&\|E\|_{L^\infty}\left\|\nabla^{N_0}f\langle v\rangle^{-\frac\gamma2}\right\|\left\|\nabla^{N_0}f\right\|_\sigma\\
\lesssim&\left\|\Lambda^{-s}E\right\|^{1-\theta}\left\|\nabla^{N_0-1}E\right\|^{\theta}\left\|\nabla^{N_0}f\langle v\rangle^{\hat{l}_7}\right\|^\theta
\left\|\nabla^{N_0}f\langle v\rangle^{\frac{\gamma+2}{2}}\|^{1-\theta}\right\|\nabla^{N_0}f\|_\sigma\\
\lesssim&\mathcal{\bar{E}}_{N_0,-\frac{\hat{l}_7}{\gamma+2}}(t)\left(\left\|\nabla^{N_0-1}E\right\|^2
+\left\|\nabla^{N_0}f\right\|_\sigma^2\right)
+\varepsilon\left\|\nabla^{N_0}f\right\|_\sigma^2.
\end{aligned}
\end{equation*}
Here $\theta=\frac{3+2s}{2(N_0-1+s)}$ and $\hat{l}_7=\frac{\gamma+2}2-\frac{\gamma+1}{\theta}=\frac{\gamma+2}2-\frac{2(N_0-1+s)(\gamma+1)}{3+2s}$.

For $I_{13,2}$ and $I_{13,3}$, we can get from the Holder inequality and Lemma \ref{lemma2.2} that
\begin{equation*}
\begin{aligned}
I_{13,3}\lesssim
 \left\|\nabla^{N_0-1} E\right\|\left\|\nabla f\langle v\rangle^{-\frac\gamma2}\right\|_{L^\infty}\left\|\nabla^{N_0}f\right\|_\sigma
\lesssim\mathcal{\bar{E}}_{N_0,\frac{\gamma}{2(\gamma+2)}}(t)\left\|\nabla^{N_0-1}E\right\|^2+\varepsilon\left\|\nabla^{N_0}f\right\|_\sigma^2
\end{aligned}
\end{equation*}
and
\begin{equation*}
\begin{aligned}
I_{13,2}\lesssim&\|\nabla E\|_{L^3}\left\|\nabla^{N_0-1}f\langle v\rangle^{-\frac\gamma2}\right\|_{L^6}\left\|\nabla^{N_0}f\right\|_\sigma\\
\lesssim&\left\|\Lambda^{-s}E\right\|^{1-\theta}\left\|\nabla^{N_0-1}E\right\|^{\theta}\left\|\nabla^{N_0}f\langle v\rangle^{\hat{l}_8}\right\|^\theta
\left\|\nabla^{N_0}f\langle v\rangle^{\frac{\gamma+2}{2}}\right\|^{1-\theta}\left\|\nabla^{N_0}f\right\|_\sigma\\
\lesssim&\mathcal{\bar{E}}_{N_0,-\frac{\hat{l}_8}{\gamma+2}}(t)\left(\left\|\nabla^{N_0-1}E\right\|^2
+\left\|\nabla^{N_0}f\right\|_\sigma^2\right)
+\varepsilon\left\|\nabla^{N_0}f\right\|_\sigma^2
\end{aligned}
\end{equation*}
with  $\theta=\frac{3+2s}{2(N_0-1+s)}$, $\hat{l}_8=\frac{\gamma+2}2-\frac{\gamma+1}{\theta}=\frac{\gamma+2}2-\frac{2(N_0-1+s)(\gamma+1)}{3+2s}.$

Similar to that of (\ref{E_N_0-1}), $I_{13,5}$ can be dominated by
\begin{equation}
\begin{aligned}
I_{13,5}\lesssim&\sum_{2\leq j\leq N_0-2}\left\|\nabla^{j} E\right\|_{L^6}\left\|\nabla ^{N_0-j}f\langle v\rangle^{-\frac\gamma2}\right\|_{L^3}
\left\|\nabla^{N_0}f\right\|_{\sigma}\\
\lesssim&\sum_{2\leq j\leq N_0-2}\left\|\Lambda^{-s}E\right\|^{1-\theta}\left\|\nabla^{N_0-1}E\right\|^{\theta}
\left\|\nabla ^{N_0-1}f\langle v\rangle^{-\frac\gamma2}\right\|^\alpha\left\|f\langle v\rangle^{-\frac\gamma2}\right\|^{1-\alpha}
\left\|\nabla^{N_0}f\right\|_\sigma\\
\lesssim&\sum_{2\leq j\leq N_0-2}\left\|\Lambda^{-s}E\right\|^{1-\theta}\left\|\nabla^{N_0-1}E\right\|^{\theta}\left(\left\|\nabla^{N_0-1}f\langle v\rangle^{\hat{l}_9}\right\|^{1-\beta}\left\|\nabla^{N_0-1}f\langle v\rangle^{\frac{\gamma+2}{2}}\right\|^\beta\right)^{\alpha}
\left\|f\langle v\rangle^{-\frac\gamma2}\right\|^{1-\alpha}\left\|\nabla^{N_0}f\right\|_\sigma\\
\lesssim&\sum_{2\leq j\leq N_0-2}\left\|\Lambda^{-s}E\right\|^{1-\theta}\left\|f\langle v\rangle^{-\frac\gamma2}\right\|^{1-\alpha}
\left\|\nabla^{N_0-1}f\langle v\rangle^{\hat{l}_9}\right\|^{\alpha-\alpha\beta}\left\|\nabla^{N_0-1}E\right\|^{\theta}\left\|\nabla^{N_0-1}f\langle v\rangle^{\frac{\gamma+2}{2}}\right\|^{\alpha\beta}\left\|\nabla^{N_0}f\right\|_\sigma\\
\lesssim&\mathcal{\bar{E}}_{N_0-1,-\frac{\hat{l}_9}{\gamma+2}}(t)\left(\left\|\nabla^{N_0-1}E\right\|^2+\left\|\nabla^{N_0-1}f\right\|_\sigma^2\right)
+\varepsilon\left\|\nabla^{N_0-1}f\right\|_\sigma^2.
\end{aligned}
\end{equation}
Here $\theta=\frac{j+s+1}{N_0-1+s}$, $\alpha=\frac{2N_0-2j+1}{2(N_0-1)}$, and $\frac{\gamma+2}2\cdot\beta+\hat{l}(1-\beta)=-\frac\gamma 2$. If we set $\alpha\beta=1-\theta$, then $\hat{l}_9=-\frac{-\frac{\gamma}2-\frac{\gamma+2}2\cdot\beta}{1-\beta}=\frac{\gamma+2}2-\frac{\gamma+1}{1-\beta}=\frac{\gamma+2}2-\frac{(\gamma+1)(N_0-1+s)(2N_0-2j+1)}{(N_0-1+s)(2N_0-2j+1)-2(N_0-1)(N_0-2-j)}$ with $2\leq j\leq N_0-2$.

For the most important term $I_{13,4}$, if $n>\frac 23N_0-\frac53$, we have from Lemma \ref{lemma2.2} that
\begin{equation}\label{E-N_0}
\begin{aligned}
I_{13,4}
\lesssim&\left\|\nabla^{N_0-1}E\right\|^{\frac {n}{n+1}}\left\|\nabla^{N_0+n}E\right\|^{\frac1{n+1}}
\left\|\langle v\rangle^{-\frac\gamma2}f\right\|^\frac{2N_0-5}{2N_0-2}\left\|\langle v\rangle^{-\frac\gamma2}\nabla^{N_0-1}f\right\|^{\frac{3}{2N_0-2}}\left\|\nabla^{N_0}f\right\|_{\sigma}\\
\lesssim&\left\|\nabla^{N_0-1}E\right\|^{\frac {n}{n+1}}\left\|\nabla^{N_0+n}E\right\|^{\frac1{n+1}}
\left(\left\|\langle v\rangle^{\hat{l}_{10}}\nabla^{N_0-1}f\right\|^{1-\alpha}\left\|\langle v\rangle^{\frac{\gamma+2}2}\nabla^{N_0-1}f\right\|^{\alpha}\right)^{\frac{3}{2N_0-2}}
\left\|\langle v\rangle^{-\frac\gamma2}f\right\|^\frac{2N_0-5}{2N_0-2}\left\|\nabla^{N_0}f\right\|_{\sigma}\\
\lesssim&\left\|\nabla^{N_0-1}E\right\|^{\frac {n}{n+1}}\left\|\nabla^{N_0+n}E\right\|^{\frac1{n+1}}
\left\|\langle v\rangle^{\frac{\gamma+2}2}\nabla^{N_0-1}f\right\|^{\frac{3\alpha}{2N_0-2}}
\left\|\langle v\rangle^{\hat{l}_{10}}\nabla^{N_0-1}f\right\|^{\frac{3(1-\alpha)}{2N_0-2}}
\left\|\langle v\rangle^{-\frac\gamma2}f\right\|^\frac{2N_0-5}{2N_0-2}\left\|\nabla^{N_0}f\right\|_{\sigma}\\
\lesssim&\left\|\nabla^{N_0-1}E\right\|^{\frac {n}{n+1}}\left\|\nabla^{N_0+n}E\right\|^{\frac1{n+1}}
\left\|\langle v\rangle^{\frac{\gamma+2}2}\nabla^{N_0-1}f\right\|^{\frac1{n+1}}
\left\|\langle v\rangle^{\hat{l}_{10}}\nabla^{N_0-1}f\right\|^{\frac{3(1-\alpha)}{2N_0-2}}
\left\|\langle v\rangle^{-\frac\gamma2}f\right\|^\frac{2N_0-5}{2N_0-2}\left\|\nabla^{N_0}f\right\|_{\sigma}\\
\lesssim&\max\left\{\mathcal{E}_{N_0+n}(t),\mathcal{E}_{N_0-1,-\frac{\hat{l}_{10}}{\gamma+2}}(t)\right\}\left(\left\|\nabla^{N_0-1}E\right\|^2+\left\|\nabla^{N_0-1}f\right\|_\sigma^2\right)
+\varepsilon\left\|\nabla^{N_0}f\right\|_\sigma^2.
\end{aligned}
\end{equation}
Here we require that $\frac{3\alpha}{2N_0-2}=\frac{1}{n+1}$ which deduce that $\alpha=\frac{2N_0-2}{3(n+1)}$. We set $\frac{\gamma+2}2\cdot\alpha+\hat{l}_{10}(1-\alpha)=-\frac\gamma 2$ which yields that
$\hat{l}_{10}=\frac{-\frac{\gamma}2-\frac{\gamma+2}2\cdot\alpha}{1-\alpha}=\frac{\gamma+2}2-\frac{\gamma+1}{1-\alpha}=\frac{\gamma+2}2-\frac{3(\gamma+1)(n+1)}{3n-2 N_0+1}.$

Consequently, we can get from the estimates on $I_{13,j} (j=1,2,3,4,5)$ that
$$
\begin{array}{rl}
I_{13}\lesssim&\max\left\{\mathcal{E}_{N_0+n}(t),\mathcal{\bar{E}}_{N_0,\frac{\gamma}{2(\gamma+2)}}(t),\mathcal{E}_{N_0-1,-\frac{\max\{\hat{l}_7,\hat{l}_8,\hat{l}_9,\hat{l}_{10}\}}{\gamma+2}}(t)\right\}
\left(\left\|\nabla^{N_0-1}(E,B)\right\|^2
+\left\|\nabla^{N_0-1}f\right\|_\sigma^2+\left\|\nabla^{N_0}f\right\|_\sigma^2\right)\\[2mm]
&+\varepsilon\left\|\nabla^{N_0}f\right\|_\sigma^2.
\end{array}
$$

Similarly, we can conclude that
$$
\begin{array}{rl}
I_{14}+I_{15}\lesssim&\max\left\{\mathcal{E}_{N_0+n}(t),\mathcal{\bar{E}}_{N_0,\frac{\gamma-2}{2(\gamma+2)}}(t),\mathcal{E}_{N_0,-\frac{\hat{l}_{11}}{\gamma+2}}(t)\right\}
\left(\left\|\nabla^{N_0-1}(E,B)\right\|^2+\left\|\nabla^{N_0-1}f\right\|_\sigma^2
+\left\|\nabla^{N_0}f\right\|_\sigma^2\right)\\
&+\varepsilon\left\|\nabla^{N_0}f\right\|_\sigma^2
\end{array}
$$
and
\[
I_{16}\lesssim\max\left\{\mathcal{E}_{N_0}(t),\mathcal{E}_{1,1}(t)\right\}\left\|\nabla^{N_0}f\right\|_\sigma^2
+\varepsilon\left\|\nabla^{N_0}\{{\bf I-P}\}f\right\|_\sigma^2.
\]
Here
$
\hat{l}_{11}=\max\left\{\frac{\gamma+2}2-\frac{2(N_0-1+s)\gamma}{3+2s},\frac{\gamma+2}2-\frac{3\gamma(n+1)}{3n-2 N_0+1},
 \frac{\gamma+2}2-\frac{\gamma(N_0-1+s)(2N_0-2j+1)}{(N_0-1+s)(2N_0-2j+1)-2(N_0-1)(N_0-2-j)}\right\}
$
with $2\leq j\leq N_0-2$.

Putting the above estimates on $I_j (j=13, 14, 15, 16)$ together, one can deduce that (\ref{Lemma4.3-3}) holds with $N=N_0+n$ which implies that $N>\frac{5(N_0-1)}{3}$ since $n$ is assumed to satisfy $n>\frac{2N_0-5}{3}$. This completes the proof of Lemma \ref{lemma4.3}.
\end{proof}

The next lemma is concerned with the macro dissipation $\mathcal{D}_{N,mac}(t)$ defined by
$$
\mathcal{D}_{N,mac}(t)\sim\left\|\nabla_x(a_\pm,b,c)\right\|^2_{H^{N-1}}+\|a_+-a_-\|^2+\|E\|^2_{H^{N-1}}+\|\nabla_xB\|^2_{H^{N-2}}.
$$
\begin{lemma}\label{Lemma4.4}

\begin{itemize} For the macro dissipation estimates on $f(t,x,v)$, we have the following results:
\item[(i).] For $k=0,1,2\cdots,N_0-2$, there exist interactive energy functionals $G^k_f(t)$ satisfying
\[
G^k_f(t)\lesssim \left\|\nabla^k(f,E,B)\right\|^2+\left\|\nabla^{k+1}(f,E,B)\right\|^2+\left\|\nabla^{k+2}E\right\|^2
\]
such that
\begin{equation}\label{Lemma3.4-1}
\begin{split}
&\frac{d}{dt}G^k_f(t)+\left\|\nabla^k(E,a_+-a_-)\right\|_{H^1}^2+\left\|\nabla^{k+1}({\bf P}f,B)\right\|^2\\
\lesssim&\mathcal{\bar{E}}_{N_0-1,0}(t)\left(\left\|\nabla^{k+1}(E,B)\right\|^2+\left\|\nabla^{k+1}f\right\|^2_\sigma\right)
+\left\|\nabla^k\{{\bf I-P}\}f\right\|^2_\sigma\\
&
+\left\|\nabla^{k+1}\{{\bf I-P}\}f\right\|^2_\sigma
+\left\|\nabla^{k+2}\{{\bf I-P}\}f\right\|^2_\sigma;
\end{split}
\end{equation}
\item[(ii).] For $k=N_0-1$, there exists an interactive energy functional $G^{N_0-1}_f(t)$ satisfying
$$
G^{N_0-1}_f(t)\lesssim\left\|\nabla^{N_0-2}(f,E,B)\right\|^2+\left\|\nabla^{N_0-1}(f,E,B)\right\|^2
+\left\|\nabla^{N_0}(f,E)\right\|^2
$$
such that
\begin{equation}\label{Lemma3.4-2}
\begin{split}
&\frac{d}{dt}G^{N_0-1}_f(t) +\left\|\nabla^{N_0-2}(E,a_+-a_-)\right\|_{H^1}^2+\left\|\nabla^{N_0-1}B
\right\|^2+\left\|\nabla^{N_0}{\bf P}f\right\|^2\\
\lesssim&\mathcal{\bar{E}}_{N_0,0}(t)\left(\left\|\nabla^{N_0-1}(E,B)\right\|^2
+\left\|\nabla^{N_0-1}f\right\|^2_\sigma\right)+\left\|\nabla^{N_0-2}\{{\bf I-P}\}f\right\|^2_\sigma\\
&+\left\|\nabla^{N_0-1}\{{\bf I-P}\}f\right\|^2_\sigma
+\left\|\nabla^{N_0}\{{\bf I-P}\}f\right\|^2_\sigma;
\end{split}
\end{equation}
\item[(iii).] As \cite{Duan-VML}, there exists an interactive energy functional $\mathcal{E}^{int}_N(t)$ satisfying
$$
\mathcal{E}^{int}_N(t)\lesssim\sum_{|\alpha|\leq N}\left\|\partial^\alpha(f,E,B)\right\|^2$$
such that
\begin{equation}\label{lemma3.4-3}
\frac{d}{dt}\mathcal{E}^{int}_N(t)+\mathcal{D}_{N,mac}(t)\lesssim\sum_{|\alpha|\leq N}\left\|\partial^\alpha\{{\bf I-P}\}f\right\|_\sigma^2+\mathcal{E}_N(t)\mathcal{D}_N(t)
\end{equation}
holds for any $t\in[0,T]$.
\end{itemize}
\end{lemma}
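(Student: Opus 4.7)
My plan is to adapt the macro-micro dissipation-extraction strategy to extract the dissipation of each macroscopic component one at a time, working directly from the fluid-type system \eqref{Macro-equation1}, the moment equations \eqref{Micro-equation1}--\eqref{Micro-equation2}, the two-species difference equations \eqref{a_+-a_--original}, and Maxwell's equations. The core construction will assemble $G^k_f(t)$ as a weighted sum of five interactive functionals, one for each of $c$, $b$, $a_++a_-$, $(a_+-a_-, E)$, and $B$, with hierarchical weights $0<\kappa_5\ll\kappa_4\ll\cdots\ll\kappa_1\ll 1$ chosen so that every $\varepsilon$-coupling term is absorbed by the next dissipation in the chain.

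Step 1 (dissipation of $c$): apply $\nabla^k$ to the $B_j$-equation in \eqref{Micro-equation1} and test against $\nabla^k\partial_j c$; the time derivative of $B_j({\bf\{I-P\}}f\cdot[1,1])$ is converted into an exact time derivative of $G^k_c\sim(\nabla^k B_j({\bf\{I-P\}}f\cdot[1,1]),\nabla^k\partial_j c)$, plus a term containing $\partial_t c$ which is then replaced via $(\ref{Macro-equation1})_3$ by $\nabla\!\cdot\! b$, moments of ${\bf\{I-P\}}f$ and the nonlinearity $G\!\cdot\! E$, producing $\|\nabla^{k+1}c\|^2\lesssim\frac{d}{dt}G^k_c+\varepsilon\|\nabla^{k+1}b\|^2+(\text{micro at orders }k,k{+}1)+\text{nonlinear}$. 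Step 2 (dissipation of $b$): the $A_{ij}$-equation \eqref{Micro-equation2} tested against $\nabla^k(\partial_j b_i+\partial_i b_j-\tfrac{2}{3}\delta_{ij}\nabla\!\cdot\! b)$ yields $G^k_b$ bounding $\|\nabla^{k+1}b\|^2$ modulo $\varepsilon\|\nabla^{k+1}(a_++a_-,c)\|^2$ and the same micro/nonlinear terms. Step 3 (dissipation of $a_++a_-$): test $\nabla^k$ of $(\ref{Macro-equation1})_2$ against $\nabla^{k+1}(a_++a_-)$ and use $(\ref{Macro-equation1})_1$ to remove $\partial_t(a_++a_-)$, giving $G^k_a$ and $\|\nabla^{k+1}(a_++a_-)\|^2$. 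Step 4 (dissipation of $E$ and $a_+-a_-$): on \eqref{a_+-a_--original}, test the $G$-equation against $\nabla^{k+1}(a_+-a_-)$ and against $-2\nabla^k E$; the integration-by-parts identity $\nabla\!\cdot\! E=a_+-a_-$ supplies $\|\nabla^k(a_+-a_-)\|^2$ directly, yielding both $\|\nabla^k E\|^2$ and $\|\nabla^{k+1}(a_+-a_-)\|^2$. Step 5 (dissipation of $B$): test $\nabla^k(\partial_t E-\nabla_x\times B+G)=0$ against $-\nabla^k\nabla_x\times B$ and use $\partial_t B+\nabla_x\times E=0$ to convert $\partial_t E$ into the time derivative of $(\nabla^k E,\nabla^k\nabla_x\times B)$, obtaining $\|\nabla^{k+1}B\|^2\lesssim\frac{d}{dt}(\cdots)+\|\nabla^{k+1}E\|^2+(\text{nonlinear, micro})$.

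The bilinear and Lorentz-force nonlinearities $\nabla^k(E\!\cdot\! vf)$, $\nabla^k(E\!\cdot\!\nabla_v f)$, $\nabla^k((v\!\times\! B)\!\cdot\!\nabla_v f)$ and $\nabla^k\Gamma(f,f)$ produced by each of these tests are distributed using Lemma \ref{lemma2.2}, Corollary \ref{corrollary} and Lemma \ref{lemma2.4} exactly as in the proof of Lemma \ref{lemma4.3}, placing the highest-order factor in a dissipation-type norm; this is how the prefactors $\mathcal{\bar{E}}_{N_0-1,0}(t)$ in (i) and $\mathcal{\bar{E}}_{N_0,0}(t)$ in (ii) emerge. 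Part (ii) follows the same five-step construction with $k=N_0-1$, except that the Maxwell-curl identity at this top order would cost $\|\nabla^{N_0}E\|^2$, which is no longer in the dissipation; we therefore shift the $B$-interactive functional down by one order, gaining only $\|\nabla^{N_0-1}B\|^2$ and $\|\nabla^{N_0}{\bf P}f\|^2$ (from Step 2 at one lower order) in the dissipation, which matches \eqref{Lemma3.4-2} precisely. Part (iii) is the $L^2$-based analog summed over $|\alpha|\leq N$, identical in structure to the corresponding construction in \cite{Duan-VML}, with nonlinear terms of the form $\mathcal{E}_N(t)\mathcal{D}_N(t)$ since no negative Sobolev interpolation is needed.

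The hard part will be the regularity-loss coupling between $E$ and $B$ in Step 5: controlling $\|\nabla^{k+1}B\|^2$ requires $\|\nabla^{k+1}E\|^2$, which is one spatial derivative above what Step 4 delivers. This is precisely why the bound for $G^k_f(t)$ must absorb $\|\nabla^{k+2}E\|^2$ as asserted in (i), and it forces the index shift in (ii); closing the chain without forfeiting more derivatives than the lemma permits is the delicate point and the reason the weights $\kappa_i$ must be ordered as above, with $\kappa_1$ (governing $G^k_E$) much larger than $\kappa_0$ (governing $G^k_B$) so that the $\varepsilon\|\nabla^{k+1}E\|^2$ produced by Step 5 is controlled by the $\|\nabla^k E\|^2$ plus interpolation with $\|\nabla^{k+1}E\|^2$ already available from Step 4.
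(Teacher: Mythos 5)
Your proposal is correct and follows essentially the same route as the paper: the paper builds the fluid-part interactive functional by repeating the argument of Lemma \ref{Lemma4.2} (your Steps 1--3 and the $a_+-a_-$ part of Step 4) and the $(E,B)$ interactive functional by the procedure of Lemma \ref{lemma3.3} (your Steps 4--5), estimates the nonlinear source terms exactly as in Lemma \ref{lemma4.3}, and then takes a suitable linear combination, with the same index shift of the $(E,B)$ functional to order $N_0-2$ in part (ii) and the same appeal to \cite{Duan-VML} for part (iii). Your identification of why $\|\nabla^{k+2}E\|^2$ must appear in the bound on $G^k_f(t)$ and why the top-order case forces the shift matches the paper's construction.
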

\begin{proof} For the case of $k=0,1,\cdots,N_0-2$, as in \cite{Guo-JAMS-11}, by repeating the argument used in Lemma \ref{Lemma4.2} and by using the local conservation laws and the macroscopic equations \eqref{Macro-equation}, \eqref{Micro-equation}, \eqref{Macro-equation1}, \eqref{Micro-equation1}, \eqref{Micro-equation2}, and \eqref{a_+-a_--original} which are derived from the so-called macro-micro decomposition \eqref{macro-micro}, we can deduce that there exists an interactive functional $G^k_{\bar{f}}(t)$ such that
\begin{equation}
\begin{aligned}
&\frac{d}{dt}G^k_{\bar{f}}(t)+\left\|\nabla^k(a_+-a_-)\right\|^2+\left\|\nabla^{k+1}{\bf P}f\right\|^2\\[2mm]
\lesssim&\underbrace{\left\|\left\langle\nabla^k\left({\bf \Gamma}(f,f)+\frac12 v\cdot E f+ (E+v\times B)\cdot\nabla_{ v}f\right),\ \mu^\delta\right\rangle\right\|^2}_{I_{17}}\\[2mm]
&+\left\|\nabla^k\{{\bf I-P}\}f\right\|^2_\sigma+\left\|\nabla^{k+1}\{{\bf I-P}\}f\right\|^2_\sigma.
\end{aligned}
\end{equation}
Similar to that of Lemma \ref{lemma4.3}, $I_{17}$ can be bounded by
$$
I_{17}\lesssim\mathcal{\bar{E}}_{N_0-1,0}(t)\left(\left\|\nabla^{k+1}(E,B)\right\|^2+\left\|\nabla^{k+1}f\right\|^2_\sigma\right)
+\left\|\nabla^k\{{\bf I-P}\}f\right\|^2_\sigma,
$$
which yields that
\begin{equation}\label{pf-k}
\begin{aligned}
&\frac{d}{dt}G^k_{\bar{f}}(t)+\left\|\nabla^k(a_+-a_-)\right\|^2+\left\|\nabla^{k+1}{\bf P}f\right\|^2\\
\lesssim&\mathcal{\bar{E}}_{N_0-1,0}(t)\left(\left\|\nabla^{k+1}(E,B)\right\|^2+\left\|\nabla^{k+1}f\right\|^2_\sigma\right)
+\left\|\nabla^k\{{\bf I-P}\}f\right\|^2_\sigma+\left\|\nabla^{k+1}\{{\bf I-P}\}f\right\|^2_\sigma,
\end{aligned}
\end{equation}
where
$$
G^k_{\bar{f}}(t)\lesssim \left\|\nabla^k f\right\|^2+\left\|\nabla^{k+1}f\right\|^2.
$$

When $k=N_0-1$, we have by employing the same argument that
\begin{equation}\label{pf-N_0-1}
\begin{aligned}
&\frac{d}{dt}G^{N_0-1}_{\bar{f}}(t)+\left\|\nabla^{N_0-1}(a_+-a_-)\right\|^2+\left\|\nabla^{N_0}{\bf P}f\right\|^2\\
\lesssim&\mathcal{\bar{E}}_{N_0,0}(t)\left(\left\|\nabla^{N_0-1}(E,B)\right\|^2+\left\|\nabla^{N_0-1}f\right\|^2_\sigma\right)
+\left\|\nabla^{N_0-1}\{{\bf I-P}\}f\right\|^2_\sigma+\left\|\nabla^{N_0}\{{\bf I-P}\}f\right\|^2_\sigma,
\end{aligned}
\end{equation}
where
$$
G^{N_0-1}_{\bar{f}}(t)\lesssim \left\|\nabla^{N_0-1} f\right\|^2+\left\|\nabla^{N_0}f\right\|^2.
$$

By the similar procedure as in the proof of Lemma \ref{lemma3.3}, we have for $k=0,1,\cdots,N_0-2$ that
\begin{eqnarray*}
&&\frac{d}{dt}G^k_{E,B}(t)+\left\|\nabla^{k}(E,a_+-a_-)\right\|_{H^1}^2+\left\|\nabla^{k+1}B\right\|^2\\
&\lesssim&\underbrace{\left\|\nabla^k\left\{E(a_++a_-)+b\times B+\left\langle{ \Gamma}(f,f),\ \mu^\delta\right\rangle\right\}\right\|^2}_{I_{18}}\\
&&+\underbrace{\left\|\nabla^{k+1}\left\{E(a_++a_-)+b\times B+\left\langle{ \Gamma}(f,f),\ \mu^\delta\right\rangle\right\}\right\|^2}_{I_{19}}\\
&&+\left\|\nabla^k\{{\bf I-P}\}f\right\|^2_\sigma
+\left\|\nabla^{k+1}\{{\bf I-P}\}f\right\|^2_\sigma+\left\|\nabla^{k+2}\{{\bf I-P}\}f\right\|^2_\sigma,
\end{eqnarray*}
where
$$
G^k_{E,B}(t)\lesssim \left\|\nabla^k(f,E,B)\right\|^2+\left\|\nabla^{k+1}(f,E,B)\right\|^2
+\left\|\nabla^{k+2}E\right\|^2.
$$
$I_{18}$ and $I_{19}$ can be bounded by
$$
\left|I_{18}\right|+\left|I_{19}\right|\lesssim\mathcal{\bar{E}}_{N_0,0}(t)\left(\left\|\nabla^{k+1}(E,B)\right\|^2
+\left\|\nabla^{k+1}f\right\|^2_\sigma\right)
+\left\|\nabla^k\{{\bf I-P}\}f\right\|^2_\sigma+\left\|\nabla^{k+1}\{{\bf I-P}\}f\right\|^2_\sigma,
$$
which deduces that
\begin{equation}\label{E,B-k}
\begin{aligned}
&\frac{d}{dt}G^k_{E,B}(t)+\left\|\nabla^{k}(E,a_+-a_-)\right\|_{H^1}^2+\left\|\nabla^{k+1}B\right\|^2\\
\lesssim&\mathcal{\bar{E}}_{N_0,0}(t)\left(\left\|\nabla^{k+1}(E,B)\right\|^2+\left\|\nabla^{k+1}f\right\|^2_\sigma\right)+\left\|\nabla^k\{{\bf I-P}\}f\right\|^2_\sigma\\
&+\left\|\nabla^{k+1}\{{\bf I-P}\}f\right\|^2_\sigma
+\left\|\nabla^{k+2}\{{\bf I-P}\}f\right\|^2_\sigma.
\end{aligned}
\end{equation}

With the above estimates in hand, we now turn to deduce \eqref{Lemma3.4-1}, (\ref{Lemma3.4-2}), and \eqref{lemma3.4-3}. In fact for the case of $k=0,1,\cdots,N_0-2$, a proper linear combination of \eqref{pf-k} and \eqref{E,B-k} gives \eqref{Lemma3.4-1} with
$G^k_{f}(t)\thicksim G^k_{\bar{f}}(t)+\kappa_1G^k_{E,B}(t), 0<\kappa_1\ll1,$
while (\ref{Lemma3.4-2}) follows by a suitable linear combination of \eqref{pf-N_0-1} and \eqref{E,B-k} with $k=N_0-2$,
where
$G^{N_0-1}_{f}(t)\thicksim G^{N_0-1}_{\bar{f}}(t)+\kappa_2G^{N_0-2}_{E,B}(t), 0<\kappa_2\ll1.$

Finally, \eqref{lemma3.4-3} follows by the same trick as in \cite{Duan-VML}, we omit the details for brevity.
Thus we have completed the proof of Lemma \ref{lemma4.3}.
\end{proof}

\section{The proofs of our main results}

This section is devoted to proving our main results. For this purpose, suppose that the Cauchy problem (\ref{f}) and (\ref{f-initial}) admits a unique local solution $f(t,x,v)$ defined on the time interval $ 0\leq t\leq T$ for some $0<T<\infty$ and such a solution $f(t,x,v)$
satisfies the a priori assumption
\begin{equation}\label{E-priori}
\begin{aligned}
X(t)=\sup_{0\leq \tau\leq t}\left\{\bar{\mathcal{E}}_{N_0,l_0+l^*}(\tau)+\mathcal{E}_{N}(\tau)+(1+\tau)^{-\frac{1+\epsilon_0}{2}}\mathcal{E}_{N,l}(\tau)\right\}\leq M,
\end{aligned}
\end{equation}
where the parameters $N_0,N,l, l_0,$ and $l^*$ are given in Theorem \ref{Th1.1} and $M$ is a sufficiently small positive constant. Then to use the continuation argument to extend such a solution step by step to a global one, one only need to deduce certain uniform-in-time energy type estimates on $f(t,x,v)$ such that the a priori assumption (\ref{E-priori}) can be closed. Recall that in the coming analysis, the parameter $\gamma$ is assumed to satisfy $-3\leq \gamma<-2$ and it is worth point out that such a fact plays an important role in the analysis.

For this purpose, we first deduce the temporal decay of the energy functional $\mathcal{E}^k_{N_0}(t)$ in the following lemma
\begin{lemma}\label{Lemma4.5}
Assume that the a priori assumption (\ref{E-priori}) is true and let $l$ be as in Theorem $\ref{Th1.1}$, then there exist an energy functional $\mathcal{E}^k_{N_0}(t)$ and the corresponding energy dissipation rate functional $\mathcal{D}^k_{N_0}(t)$ satisfying \eqref{E_k} and \eqref{D_k} respectively such that
\begin{equation}\label{Lemma4.5-1}
\frac{d}{dt}\mathcal{E}^k_{N_0}(t)+\mathcal{D}^k_{N_0}(t)\leq 0
\end{equation}
holds for $k=0,1,2,\cdots, N_0-2$ and all $0\leq t\leq T.$

Furthermore, we can get that
\begin{equation}\label{Lemma4.5-2}
\mathcal{E}^k_{N_0}(t)\lesssim\sup_{0\leq \tau\leq t}\left\{\mathcal{\bar{E}}_{N_0,\frac{k+s}{2}}(\tau),\mathcal{E}_{N_0+k+s}(\tau)\right\}(1+t)^{-(k+s)},\quad 0\leq t\leq T.
\end{equation}
Here we need to assume that $N\geq N_0+k+s$.
\end{lemma}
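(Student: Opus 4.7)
The argument has two stages: first establish the monotone energy--dissipation inequality \eqref{Lemma4.5-1}, and then extract the polynomial decay \eqref{Lemma4.5-2} via Sobolev interpolation. Throughout, the a priori assumption \eqref{E-priori} supplies the smallness needed to absorb nonlinear errors.

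\textbf{Stage 1: Energy--dissipation inequality.} I would define
\[
\mathcal{E}^k_{N_0}(t) \sim C_0\sum_{j=k}^{N_0}\|\nabla^j(f,E,B)\|^2 + \sum_{j=k}^{N_0-1}\kappa_j\, G^j_f(t),
\]
with $C_0\gg 1$ and $\kappa_{N_0-1}\ll\cdots\ll\kappa_k\ll 1$, where the $G^j_f$ are the interactive functionals produced in Lemma~\ref{Lemma4.4}. Summing the three energy estimates of Lemma~\ref{lemma4.3} over $|\alpha|=k,\ldots,N_0$ gives the $\{{\bf I-P}\}f$ dissipation at every order from $k$ through $N_0$, while the macro--dissipation estimates \eqref{Lemma3.4-1}--\eqref{Lemma3.4-2} supply the missing macroscopic pieces $\|\nabla^k(E,a_+-a_-)\|^2_{H^1}$, $\|\nabla^{j+1}({\bf P}f,B)\|^2$ for $k\leq j\leq N_0-2$, and $\|\nabla^{N_0}{\bf P}f\|^2$. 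All nonlinear prefactors have the form $\bar{\mathcal{E}}_{N_0,l'}(t)$ or $\mathcal{E}_N(t)$, both dominated by $X(t)\leq M$ via \eqref{E-priori} and hence absorbable once $M$ is taken small; the small $\varepsilon\|\nabla^{j+1}f\|^2_\sigma$ remainders in Lemma~\ref{lemma4.3} are absorbed by the next-level micro dissipation. This yields \eqref{Lemma4.5-1}.

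\textbf{Stage 2: Interpolation and decay.} The only terms appearing in $\mathcal{E}^k_{N_0}$ but not in $\mathcal{D}^k_{N_0}$ are $\|\nabla^k(a_++a_-,b,c,B)\|^2$ at the bottom order and $\|\nabla^{N_0}(E,B)\|^2$ at the top order. For the bottom-order gap, Plancherel interpolation gives
\[
\|\nabla^k g\|^2 \lesssim \|\Lambda^{-s}g\|^{\frac{2}{k+s+1}}\|\nabla^{k+1}g\|^{\frac{2(k+s)}{k+s+1}},\qquad g\in\{a_++a_-,b,c,B\},
\]
where $\|\Lambda^{-s}g\|^2\lesssim \bar{\mathcal{E}}_{N_0,(k+s)/2}(t)$ is uniformly bounded and $\|\nabla^{k+1}g\|^2\subset\mathcal{D}^k_{N_0}$. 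For the top-order gap, since $\|\nabla^{N_0-1}(E,B)\|^2\subset\mathcal{D}^k_{N_0}$ (using $k\leq N_0-2$) and $\|\nabla^N(E,B)\|^2\leq\mathcal{E}_N$, Plancherel yields
\[
\|\nabla^{N_0}(E,B)\|^2 \lesssim \|\nabla^{N_0-1}(E,B)\|^{\frac{2(N-N_0)}{N-N_0+1}}\|\nabla^N(E,B)\|^{\frac{2}{N-N_0+1}},
\]
and the dissipative exponent $(N-N_0)/(N-N_0+1)$ is at least $(k+s)/(k+s+1)$ precisely when $N\geq N_0+k+s$. Combining both interpolations, and reducing all dissipative exponents to the common value $(k+s)/(k+s+1)$ by the smallness of $\mathcal{D}^k_{N_0}$, produces
\[
\bigl(\mathcal{E}^k_{N_0}(t)\bigr)^{1+\frac{1}{k+s}} \lesssim A(t)\,\mathcal{D}^k_{N_0}(t),\quad A(t):=\sup_{0\leq\tau\leq t}\bigl\{\bar{\mathcal{E}}_{N_0,(k+s)/2}(\tau),\,\mathcal{E}_{N_0+k+s}(\tau)\bigr\}^{\!1/(k+s)}.
\]
Inserting this Bernoulli-type bound into \eqref{Lemma4.5-1} gives the scalar ODE $\tfrac{d}{dt}\mathcal{E}^k_{N_0}+cA(t)^{-1}(\mathcal{E}^k_{N_0})^{1+1/(k+s)}\leq 0$, whose direct integration delivers \eqref{Lemma4.5-2}.

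\textbf{Main obstacle.} The delicate step is the top-order gap: because $\|\nabla^{N_0}(E,B)\|^2$ enters $\mathcal{E}^k_{N_0}$ but is absent from $\mathcal{D}^k_{N_0}$, one must interpolate against strictly higher regularity, and requiring that the resulting exponent of the dissipative factor match the one coming from the bottom-order gap is exactly what pins down the hypothesis $N\geq N_0+k+s$; saturating this at $k=N_0-2$ reproduces the global minimal regularity requirement of Theorem~\ref{Th1.1}. A secondary technical issue is choosing the cascade $\kappa_{N_0-1}\ll\cdots\ll\kappa_k\ll 1\ll C_0$ so that, after adding the $G^j_f$'s, the composite $\mathcal{E}^k_{N_0}(t)$ stays equivalent to $\sum_{j=k}^{N_0}\|\nabla^j(f,E,B)\|^2$.
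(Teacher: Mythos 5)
Your Stage 1 is essentially the paper's argument (a suitable linear combination of the estimates of Lemma \ref{lemma4.3} and Lemma \ref{Lemma4.4}, with the nonlinear prefactors absorbed via \eqref{E-priori}), and your treatment of the top-order gap $\|\nabla^{N_0}(E,B)\|^2$ is a workable variant of the paper's interpolation between $\nabla^{N_0-1}$ and $\nabla^{N_0+k+s}$. The genuine gap is in your claim that ``the only terms appearing in $\mathcal{E}^k_{N_0}$ but not in $\mathcal{D}^k_{N_0}$'' are the bottom-order macroscopic quantities and the top-order field. That is false in the soft-potential regime $-3\leq\gamma<-2$ treated here: the energy \eqref{E_k} contains $\|\nabla^{j}f\|^2$ in the plain $L^2_{x,v}$ norm for every $k\leq j\leq N_0$, whereas the dissipation \eqref{D_k} controls the microscopic component only in the $\sigma$-norm, whose velocity weight $\langle v\rangle^{(\gamma+2)/2}$ degenerates as $|v|\to\infty$ since $\gamma+2<0$. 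Hence $\|\nabla^{j}\{{\bf I-P}\}f\|^2$ is \emph{not} dominated by $\|\nabla^{j}\{{\bf I-P}\}f\|^2_\sigma$ at any order, and your Bernoulli-type inequality $\bigl(\mathcal{E}^k_{N_0}\bigr)^{1+1/(k+s)}\lesssim A(t)\,\mathcal{D}^k_{N_0}$ does not follow from the two interpolations you perform.

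The paper closes exactly this hole by an additional interpolation in the velocity weight,
\begin{equation*}
\left\|\nabla^{m}f\right\|\leq \left\|\langle v\rangle^{\frac{\gamma+2}{2}}\nabla^{m}f\right\|^{\frac{k+s}{k+s+1}}
\left\|\langle v\rangle^{\bar{l}}\nabla^{m}f\right\|^{\frac{1}{k+s+1}},\qquad \bar{l}=-\frac{(\gamma+2)(k+s)}{2},
\end{equation*}
so that the degenerate $\sigma$-type factor goes into $\mathcal{D}^k_{N_0}$ with exponent $\frac{k+s}{k+s+1}$ and the strongly weighted factor is absorbed into the supremum. This is precisely why the weighted functional $\bar{\mathcal{E}}_{N_0,\frac{k+s}{2}}$ appears in \eqref{Lemma4.5-2}: the index $\frac{k+s}{2}$ corresponds to the polynomial weight $\langle v\rangle^{\bar{l}}$ through $w_{\ell}=\langle v\rangle^{-(\gamma+2)\ell}e^{q\langle v\rangle^2/(1+t)^\vartheta}$. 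In your write-up that functional is invoked only to bound $\|\Lambda^{-s}g\|$ for the macroscopic quantities, a purpose for which no velocity weight is needed, so the role it actually plays in the decay estimate is missing from your argument. Adding the velocity-weight interpolation for the microscopic part (at every order $k\leq j\leq N_0$) repairs the proof and brings it in line with the paper's.
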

\begin{proof} From Lemma \ref{lemma4.3} and \ref{Lemma4.4}, we can obtain by a suitable linear combination of the corresponding estimates obtained there that there exist an energy functional $\mathcal{E}^k_{N_0}(t)$ and the corresponding energy dissipation rate functional $\mathcal{D}^k_{N_0}(t)$ satisfying \eqref{E_k} and \eqref{D_k} respectively such that
\begin{equation*}
\frac{d}{dt}\mathcal{E}^k_{N_0}(t)+\mathcal{D}^k_{N_0}(t)\leq 0
\end{equation*}
{\color{red}holds for all $0\leq t\leq T$.}

To get the time decay rate of $\mathcal{E}^k_{N_0}(t)$, as in \cite{Guo-CPDE-12}, we need to compare the difference between $\mathcal{E}^k_{N_0}(t)$ and $\mathcal{D}^k_{N_0}(t)$.  For this purpose, we first deduce from Lemma \ref{lemma2.2} and Corollary \ref{corrollary} that
\begin{equation*}
\begin{aligned}
\left\|\nabla^k({\bf P}f,E,B)\right\|\leq \left\|\nabla^{k+1}({\bf P}f,E,B)\right\|^{\frac{k+s}{k+s+1}}
\left\|\Lambda^{-s}({\bf P}f,E,B)\right\|^{\frac{1}{k+s+1}}.
\end{aligned}
\end{equation*}
The above inequality together with the facts that
\begin{equation*}
\begin{aligned}
\left\|\nabla^mf\right\|\leq \left\|\langle v\rangle^{\frac{\gamma+2}2}\nabla^mf\right\|^{\frac{k+s}{k+s+1}}
\left\|\langle v\rangle^{\bar{l}}\nabla^mf\right\|^{\frac{1}{k+s+1}},\quad \bar{l}=-\frac{(\gamma+2)(k+s)}{2},
\end{aligned}
\end{equation*}
$$
\left\|\nabla^{N_0}(E,B)\right\|\lesssim\left\|\nabla^{N_0-1}(E,B)\right\|^\frac{k+s}{k+s+1}
\left\|\nabla^{N_0+k+s}(E,B)\right\|^\frac{1}{k+s+1}
$$
imply
\begin{equation*}
\begin{aligned}
\mathcal{E}^k_{N_0}(t)\leq \left(\mathcal{D}^k_{N_0}(t)\right)^\frac{k+s}{k+s+1}\left\{\sup_{0\leq \tau\leq t}\left\{\mathcal{\bar{E}}_{N_0,\frac{k+s}{2}}(\tau),\mathcal{E}_{N_0+k+s}(\tau)\right\}\right\}^\frac{1}{k+s+1}.
\end{aligned}
\end{equation*}
Hence, we can deduce that
\begin{equation*}
\frac{d}{dt}\mathcal{E}^k_{N_0}(t)+\left\{\sup_{0\leq \tau\leq t}\left\{\mathcal{\bar{E}}_{N_0,\frac{k+s}{2}}(\tau),\mathcal{E}_{N_0+k+s}(\tau)\right\}\right\}^{-\frac{1}{k+s}}
\left\{\mathcal{E}^k_{N_0}(t)\right\}^{1+\frac{1}{k+s}}\leq 0
\end{equation*}
and we can get by solving the above inequality directly that
\begin{equation*}
\mathcal{E}^k_{N_0}(t)\lesssim\sup_{0\leq \tau\leq t}\left\{\mathcal{\bar{E}}_{N_0,\frac{k+s}{2}}(\tau),\mathcal{E}_{N_0+k+s}(\tau)\right\}(1+t)^{-(k+s)}.
\end{equation*}
This completes the proof of Lemma \ref{Lemma4.5}.
\end{proof}
Bases on the time decay estimate on $\mathcal{E}^k_{N_0}(t)$ obtained in the above lemma, we are now ready to deduce the Lyapunov inequality for the energy functional $\mathcal{E}_{N}(t)$.
\begin{lemma}\label{lemma3.7} Under the assumptions listed in Lemma \ref{Lemma4.5}, we can deduce that there exist an energy functional $\mathcal{E}_{N}(t)$ and the corresponding energy dissipation functional $\mathcal{D}_{N}(t)$ which satisfy (\ref{E_N}), (\ref{D_N}) respectively such that
\begin{equation}\label{lemma3.7-1}
\frac{d}{dt}\mathcal{E}_{N}(t)+\mathcal{D}_{N}(t)\lesssim \frac\delta{(1+t)^{1+\vartheta}}\mathcal{D}_{N,l}(t)+\mathcal{E}_{N}(t)\mathcal{E}^1_{N_0,l_0}(t)
\end{equation}
holds for all $0\leq t\leq T$. Here we choose $N_0\geq 4$, $0<\vartheta\leq \frac s2 $ when  $s\in \left[\frac 12,1\right]$ and $N_0\geq 3,$ $0<\vartheta\leq \frac s2-\frac12$ when  $s\in \left(1,\frac 32\right)$.
\end{lemma}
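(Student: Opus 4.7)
The plan is a standard weighted $L^2$ energy estimate on the $x$-derivatives of the VML system \eqref{f-sgn} up to order $N$, combined with the macroscopic mechanism already prepared in Lemma \ref{Lemma4.4}(iii). For each multi-index $|\alpha|\leq N$ I apply $\partial^\alpha$ to \eqref{f-sgn}, take the $L^2_{x,v}$ inner product with $\partial^\alpha f$, and test the Maxwell part against $\partial^\alpha E,\partial^\alpha B$. The streaming and Maxwell-coupling terms produce $\tfrac{d}{dt}\|\partial^\alpha(f,E,B)\|^2$ after the standard cancellation, while $\langle \partial^\alpha Lf,\partial^\alpha f\rangle$ is bounded below by $\|\partial^\alpha\{{\bf I-P}\}f\|_\sigma^2$ via the coercivity \eqref{coercive}. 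Summing in $\alpha$ furnishes the microscopic part of $\mathcal{D}_N(t)$.

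To generate the macroscopic piece of $\mathcal{D}_N(t)$ (i.e.\ $\|(E,a_+-a_-)\|^2$, $\|\partial^\alpha({\bf P}f,E,B)\|^2$ for $1\leq |\alpha|\leq N-1$, and $\|\partial^\alpha {\bf P}f\|^2$ for $|\alpha|=N$), I add a sufficiently small multiple of the interactive functional $\mathcal{E}^{int}_N(t)$ produced in Lemma \ref{Lemma4.4}(iii). Since $\mathcal{E}^{int}_N(t)\lesssim \mathcal{E}_N(t)$, this preserves norm-equivalence of the total modified energy with $\mathcal{E}_N(t)$, and \eqref{lemma3.4-3} contributes $\mathcal{D}_{N,mac}(t)$ on the dissipation side at the expense of an already-controlled term $\sum_{|\alpha|\leq N}\|\partial^\alpha\{{\bf I-P}\}f\|_\sigma^2$ and a harmless $\mathcal{E}_N(t)\mathcal{D}_N(t)$ that can be absorbed under the a priori smallness \eqref{E-priori}.

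The bulk of the work is the estimation of the four nonlinear contributions $(\partial^\alpha(E\cdot vf),\partial^\alpha f)$, $(\partial^\alpha((v\times B)\cdot\nabla_v f),\partial^\alpha f)$, $(\partial^\alpha(E\cdot\nabla_v f),\partial^\alpha f)$ and $(\partial^\alpha\Gamma(f,f),\partial^\alpha f)$. For $|\alpha|\leq N_0$ the estimates derived in the proof of Lemma \ref{lemma4.3}, together with the time decay $\mathcal{E}^1_{N_0,l_0}(t)\lesssim (1+t)^{-(1+s)}$ guaranteed by Lemma \ref{Lemma4.5} at $k=1$ and the Sobolev interpolations of Corollary \ref{corrollary}, directly yield a bound of the form $\mathcal{E}_N(t)\mathcal{E}^1_{N_0,l_0}(t)+\varepsilon\mathcal{D}_N(t)$. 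The collision term $\Gamma(f,f)$ is handled using \eqref{Gamma-w}--\eqref{Gamma-0} in Lemma \ref{Lemma2.1} and the $L^3_xL^6_x$ Leibniz/Sobolev splitting as in \eqref{Gamma-k}--\eqref{Gamma_N_0-1}.

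The main obstacle is the regime $N_0<|\alpha|\leq N$, where the Lorentz-force term $E\cdot vf$ carries a $\langle v\rangle$-growth that cannot be absorbed by the $\sigma$-norm inside $\mathcal{D}_N$. The remedy, already used in \cite{Duan-VML}, is to distribute $\partial^\alpha$ by Leibniz so that the low-regularity factor is measured in $L^\infty_x$ (enjoying decay from Lemma \ref{Lemma4.5}), while the high-regularity factor involving $f$ lands in the weighted space carried by $\mathcal{E}_{N,l}$; the unavoidable $\langle v\rangle$ is then paid for by the extra dissipation term $(1+t)^{-1-\vartheta}\|\langle v\rangle w_{l-|\beta|}\partial^\alpha_\beta\{{\bf I-P}\}f\|^2$ sitting inside $\mathcal{D}_{N,l}(t)$ via Cauchy--Schwarz with weight splitting $\langle v\rangle^{\gamma+2}\cdot \langle v\rangle^{-(\gamma+2)}$. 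This is precisely the origin of the prefactor $\delta/(1+t)^{1+\vartheta}$ in \eqref{lemma3.7-1}. The requirement $N_0\geq 4$ for $s\in[\tfrac12,1]$ and $N_0\geq 3$ for $s\in(1,\tfrac32)$, together with the allowed range of $\vartheta$, is needed to ensure that $\mathcal{E}^1_{N_0,l_0}(t)$ decays strictly faster than $(1+t)^{-1}$, so that the coupling term $\mathcal{E}_N(t)\mathcal{E}^1_{N_0,l_0}(t)$ will be time-integrable when this differential inequality is combined with the weighted one in the subsequent Gronwall argument.
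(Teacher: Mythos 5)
Your overall skeleton matches the paper's: the paper also performs the pure $x$-derivative energy identity (though only for $N_0+1\leq|\alpha|\leq N$, borrowing the range $|\alpha|\leq N_0$ wholesale from \eqref{Lemma4.5-1} with $k=0$), uses the coercivity \eqref{coercive} for the microscopic dissipation, recovers the macroscopic part of $\mathcal{D}_N$ from the interactive functionals of Lemma \ref{Lemma4.4}, and reuses the Lemma \ref{lemma4.3}-type estimates for the low-order nonlinear terms. Where your proposal has a genuine gap is in the single quantitative step that the lemma is really about: how the coefficient $\delta(1+t)^{-1-\vartheta}$ in front of $\mathcal{D}_{N,l}(t)$ arises, and why exactly $N_0\geq 4$, $\vartheta\leq\frac s2$ (resp.\ $N_0\geq3$, $\vartheta\leq\frac s2-\frac12$) are imposed. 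You attribute the prefactor to a Cauchy--Schwarz absorption into the extra dissipation term $(1+t)^{-1-\vartheta}\|\langle v\rangle w_{l-|\beta|}\partial^\alpha_\beta\{{\bf I-P}\}f\|^2$ inside $\mathcal{D}_{N,l}$. But that term already carries the factor $(1+t)^{-1-\vartheta}$ as part of the \emph{definition} \eqref{D-} of $\mathcal{D}_{N,l}$, so absorbing the $\langle v\rangle$-growth into it produces at best a bound of the form $\delta\,\mathcal{D}_{N,l}(t)$ with a constant coefficient, not $\delta(1+t)^{-1-\vartheta}\mathcal{D}_{N,l}(t)$; and a constant-coefficient $\delta\mathcal{D}_{N,l}$ is useless later, since in Lemma \ref{lemma3.12} this term must be dominated, for all $t$, by $(1+t)^{-\frac{1+\epsilon_0}{2}}\mathcal{D}_{N,l}(t)$ coming from the weighted estimate. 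Nor does a crude use of the $L^\infty_x$ decay of $E$ suffice: $\|E\|_{L^\infty}$ decays roughly like $(1+t)^{-\frac{3+2s}{4}}$, which is slower than $(1+t)^{-1-\vartheta}$ for $\vartheta$ near $\frac s2$.

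What the paper actually does for the hardest term $\|E\|_{L^\infty}\|\langle v\rangle^{1/2}\partial^\alpha f\|^2$ is a velocity-weight interpolation between $\|\langle v\rangle\partial^\alpha f\|$ and $\|\langle v\rangle^{\frac{\gamma+2}{2}}\partial^\alpha f\|$ followed by Young's inequality, so that the $\sigma$-norm piece goes into $\varepsilon\mathcal{D}_N(t)$ while the field appears with the \emph{amplified} power $\|E\|_{L^\infty}^{\gamma/(\gamma+1)}$ (note $\frac{\gamma}{\gamma+1}\in[\frac32,2)$) multiplying $\|\langle v\rangle\partial^\alpha f\|^2\lesssim\mathcal{D}_{N,l}(t)$; the remaining Leibniz terms contribute a coefficient $\|\nabla^2(E,B)\|_{H^{N_0-2}}$. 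The decay of these coefficients is then read off from Lemma \ref{Lemma4.5} under the a priori assumption: $\|\nabla^2(E,B)\|_{H^{N_0-2}}\lesssim\sqrt{M}(1+t)^{-\frac{2+s}{2}}$ when $N_0\geq4$ (so one needs $\vartheta\leq\frac s2$), but only $\lesssim\sqrt{M}(1+t)^{-\frac{1+s}{2}}$ when $N_0=3$ (forcing $\vartheta\leq\frac s2-\frac12$, hence $s>1$), and the amplified power of $\|E\|_{L^\infty}$ then also beats $(1+t)^{-1-\vartheta}$. This is the actual origin of the hypotheses in the statement; your explanation that they serve to make $\mathcal{E}^1_{N_0,l_0}(t)$ decay faster than $(1+t)^{-1}$ is the reason relevant to Lemma \ref{lemma3.12} (as the paper's Remark notes), not to the derivation of \eqref{lemma3.7-1}. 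Without the interpolation-plus-decay mechanism (or an equivalent substitute), your proof of the stated inequality does not close.
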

\begin{proof}
First of all, it is straightforward to establish the energy identities
\begin{eqnarray*}
&&\frac12\frac{d}{dt}\sum\limits_{N_0+1\leq|\alpha|\leq N}\left(\left\|\partial^\alpha f\right\|^2+ \left\|\partial^\alpha(E,B)\right\|^2\right)+\sum\limits_{N_0+1\leq|\alpha|\leq N}\left(L\partial^\alpha f,\partial^\alpha f\right)\\
&=&\underbrace{\sum\limits_{N_0+1\leq|\alpha|\leq N}\left(\partial^\alpha\left(\frac{q_0}{2}E\cdot vf\right),\partial^\alpha f\right)}_{J_1}-\underbrace{\sum\limits_{N_0+1\leq|\alpha|\leq N}\left(\partial^\alpha\left(q_0(E+v\times B)\cdot\nabla_vf\right),\partial^\alpha f\right)}_{J_2}\\
&&+\underbrace{\sum\limits_{N_0+1\leq|\alpha|\leq N}\left(\partial^\alpha\Gamma(f,f),\partial^\alpha f\right)}_{J_3}.
\end{eqnarray*}

For the $\partial^\alpha$ derivative term related to $(E,B)$ with $N_0+1\leq |\alpha|\leq N$, i.e., the estimates on $J_1$ and $J_2$, one has
\begin{equation}
\begin{aligned}
J_1
\lesssim&\|E\|_{L^\infty} \left\|\langle v\rangle^{1/2}\partial^{\alpha}f\right\|
\left\|\langle v\rangle^{1/2}\partial^\alpha f\right\|\\
&+\sum_{1\leq|\alpha_1|\leq N_0-1}\left\|\partial^{\alpha_1}E\right\|_{L^6}
\left\|\langle v\rangle^{1/2}\partial^{\alpha-\alpha_1}f\right\|_{L^3}
\left\|\langle v\rangle^{1/2}\partial^\alpha f\right\|\\
&+\sum_{|\alpha_1|= N_0}\left\|\partial^{\alpha_1}E\right\|
\left\|\langle v\rangle^{1/2}\partial^{\alpha-\alpha_1}f\right\|_{L^\infty}
\left\|\langle v\rangle^{1/2}\partial^\alpha f\right\|\\
&+\sum_{|\alpha_1|\geq N_0+1,\alpha_1\neq\alpha}\left\|\partial^{\alpha_1}E\right\|_{L^6}
\left\|\langle v\rangle^{-\frac\gamma2}\partial^{\alpha-\alpha_1}f\right\|_{L^3}
\left\|\partial^\alpha f\right\|_\sigma\\
&+\left\|\partial^{\alpha}E\right\|
\left\|\langle v\rangle^{-\frac\gamma2}f\right\|_{L^\infty}
\left\|\partial^\alpha f\right\|_\sigma\\
\lesssim&\left(\| E\|^{\frac{\gamma}{\gamma+1}}_{L^{\infty}}+\left\|\nabla^2E\right\|_{H^{ N_0-2}}\right)\mathcal{D}_{N,l}(t)
+\mathcal{E}_N(t)\mathcal{E}^1_{N_0,l_0}(t)+\varepsilon\mathcal{D}_{N}(t).
\end{aligned}
\end{equation}
Here the most subtle part is the estimation on the first term on the right-hand side of the above first inequality which is carried as follows
\begin{equation}
\begin{aligned}
&\|E\|_{L^\infty} \left\|\langle v\rangle^{1/2}\partial^{\alpha}f\right\|
\left\|\langle v\rangle^{1/2}\partial^\alpha f\right\|\\
\lesssim&\|E\|_{L^\infty} \left\|\langle v\rangle\partial^{\alpha}f\right\|^{1+\frac1\gamma}
\left\|\langle v\rangle^{\frac{\gamma+2}2}\partial^{\alpha}f\right\|^{-\frac1\gamma}\\
\lesssim&\|E\|^{\frac{\gamma}{\gamma+1}}_{L^\infty} \left\|\langle v\rangle\partial^{\alpha}f\right\|^2+\varepsilon\left\|\langle v\rangle^{\frac{\gamma+2}2}\partial^{\alpha}f\right\|^2,
\end{aligned}
\end{equation}
and the other terms are easier in some sense and can be treated similarly by noticing that $l_0\geq \frac{\gamma}{2(\gamma+2)}$ and $N\leq2N_0$.

For $J_2$, due to $\left((v\times B)\cdot \partial^\alpha\nabla_vf,\partial^\alpha f\right)=0$ and $\left(E\cdot \partial^\alpha\nabla_vf,\partial^\alpha f\right)=0$, we can deduce by employing the same argument to deal with $J_1$ that
$$
J_2\lesssim\left\|\nabla^2(E,B)\right\|_{H^{ N_0-2}}\mathcal{D}_{N,l}(t)
+\mathcal{E}_N(t)\mathcal{E}^1_{N_0,l_0}(t)
+\varepsilon\mathcal{D}_{N,l}(t).
$$
Here we have used the facts that $l_0\geq \frac{\gamma-2}{2(\gamma+2)}$ and $N\leq 2N_0$.

Finally, we can get from Lemma \ref{Lemma2.1} that
$$
J_3\lesssim \mathcal{E}_{N_0,N_0}^{1/2}(t)\mathcal{D}_N(t).
$$

Collecting the above estimates gives
\begin{equation}\label{E1}
\begin{split}
&\frac{d}{dt}\sum_{N_0+1\leq|\alpha|\leq N}\left(\left\|\partial^\alpha  f\right\|^2+\left\|\partial^\alpha(E,B)\right\|^2\right)
+\sum_{N_0+1\leq|\alpha|\leq N}\left\|\partial^\alpha\{{\bf I-P}\}f\right\|_\sigma\\
\lesssim&\left(\| E\|^{\frac{\gamma}{\gamma+1}}_{L^{\infty}}+\left\|\nabla^2(E,B)\right\|_{H^{ N_0-2}}\right)\mathcal{D}_{N,l}(t)
+\mathcal{E}_N(t)\mathcal{E}^1_{N_0,l_0}(t)+\varepsilon\mathcal{D}_{N}(t).
\end{split}
\end{equation}
If we choose $N_0\geq 4$, $0<\vartheta\leq \frac s2$ when $s\in \left[\frac 12,1\right]$ and $N_0\geq 3$, $0<\vartheta\leq \frac s2-\frac12$ when  $s\in \left(1,\frac 32\right)$, we can deduce from Lemma \ref{Lemma4.5} and a proper linear combination of  (\ref{E1}) and(\ref{Lemma4.5-1}) with $k=0$ that there exist an energy functional $\mathcal{E}_N(t)$ and an energy dissipation functional $\mathcal{D}_{N}(t)$ which satisfies (\ref{E_N}) and (\ref{D_N}) such that
\begin{equation}
\frac{d}{dt}\mathcal{E}_{N}(t)+\mathcal{D}_{N}(t)\lesssim \frac\delta{(1+t)^{1+\vartheta}}\mathcal{D}_{N,l}(t)+\mathcal{E}_{N}(t)\mathcal{E}^1_{N_0,l_0}(t).
\end{equation}
Thus we have completed the proof of Lemma \ref{lemma3.7}.
\end{proof}
Now we turn to the weighted energy estimates on $\mathcal{E}_{N,l}(t)$.
\begin{lemma}\label{lemma3.8} Under the assumptions listed in Lemma \ref{Lemma4.5}, we can deduce that there exist an energy functional $\mathcal{E}_{N,l}(t)$ and the corresponding energy dissipation functional $\mathcal{D}_{N,l}(t)$ which satisfy (\ref{E-}), (\ref{D-}) such that
\begin{equation}\label{lemma3.8-1}
\frac{d}{dt}\mathcal{E}_{N,l}(t)+\mathcal{D}_{N,l}(t)\lesssim\mathcal{E}_{N}(t)\mathcal{E}^1_{N_0,l_0}(t)
+\sum_{|\alpha|= N}\|\partial^\alpha E\|\left\|\mu^\delta\partial^\alpha f\right\|
\end{equation}
holds for all $0\leq t\leq T$. Here we choose $N_0\geq 4$, $0<\vartheta\leq \frac s2 $ when  $s\in \left[\frac 12,1\right]$ and $N_0\geq 3,$ $0<\vartheta\leq \frac s2-\frac12$ when  $s\in \left(1,\frac 32\right)$.
\end{lemma}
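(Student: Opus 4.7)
The plan is to carry out a weighted energy estimate on each $\partial^\alpha_\beta f$ with $|\alpha|+|\beta|\leq N$, using the time-velocity weight $w_{l-|\beta|}(t,v)$ from \eqref{weight}. Writing the two-species equation in the compact form \eqref{f-sgn}, I would apply $\partial^\alpha_\beta$, pair with $w^2_{l-|\beta|}\partial^\alpha_\beta f$, and integrate over $\mathbb{R}^3_x\times\mathbb{R}^3_v$. Three structural contributions appear immediately: (i) the time derivative of $w^2_{l-|\beta|}$ produces the extra dissipation $(1+t)^{-1-\vartheta}\|\langle v\rangle w_{l-|\beta|}\partial^\alpha_\beta f\|^2$, which is exactly the penultimate term of $\mathcal{D}_{N,l}(t)$ in \eqref{D-}; (ii) the linear Landau piece, via Lemma \ref{Lemma2.1}\,(i)--(ii), yields the weighted $\sigma$-norm dissipation after an induction on $|\beta|$ that absorbs the small $\kappa$ same-order and lower-order $C_{\eta,\kappa}$ remainders; (iii) the free transport $v\cdot\nabla_x$ drops out after integration by parts except for commutator terms $\sum_{\beta'<\beta}\partial_{\beta'}v\cdot\nabla_x\partial^\alpha_{\beta-\beta'}f$, controlled by Cauchy--Schwarz against the $\sigma$-norm.

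The delicate work lies in the electromagnetic couplings. The linear term $\mp E\cdot v\mu^{1/2}q_1$ contributes $\bigl(\partial^\alpha E\cdot v\mu^{1/2} q_1,\,w^2_{l-|\beta|}\partial^\alpha_\beta f\bigr)$, and Gaussian decay of $\mu^{1/2}$ absorbs the polynomial factor $w^2_{l-|\beta|}\langle v\rangle$ into $\mu^\delta$ for some $\delta>0$. For $|\alpha|<N$, this is bounded by $\|\partial^\alpha E\|^2$ plus a small multiple of the weighted $\sigma$-norm of $\partial^\alpha_\beta f$ and absorbed into $\mathcal{D}_{N,l}(t)$ together with $\mathcal{D}_{N,\mathrm{mac}}(t)$ from Lemma \ref{Lemma4.4}\,(iii); for $|\alpha|=N$ and $\beta=0$, it is retained as the source $\|\partial^\alpha E\|\|\mu^\delta\partial^\alpha f\|$ on the right-hand side of \eqref{lemma3.8-1}. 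The Lorentz transport $\pm(E+v\times B)\cdot\nabla_v f$ and the semilinear $\tfrac{1}{2} E\cdot vf$ are expanded by Leibniz; the top piece $(E+v\times B)\cdot\nabla_v\partial^\alpha_\beta f$ integrates by parts in $v$, producing only weight-derivative tails that are Gaussian-controlled, while the commutator pieces distribute derivatives between $(E,B)$ and $f$. These mixed terms are estimated via Lemmas \ref{lemma2.2}--\ref{lemma2.4} and Corollary \ref{corrollary} by products of norms such as $\|\nabla^j(E,B)\|_{L^6_x}$, $\|\nabla^{k-j}f\|_{L^3_xL^2_v(\langle v\rangle^{-\gamma/2})}$, and weighted $\sigma$-norms of the top derivative; under the a priori assumption \eqref{E-priori} they are dominated by $\mathcal{E}_N(t)\mathcal{E}^1_{N_0,l_0}(t)$ plus $\varepsilon\,\mathcal{D}_{N,l}(t)$.

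The nonlinear collision term $\Gamma(f,f)$ is handled through Lemma \ref{Lemma2.1}\,(ii), which bounds $\langle w^2_{l-|\beta|}\partial^\alpha_\beta \Gamma(f,f),\partial^\alpha_\beta f\rangle$ by products of Gaussian-weighted $L^2_v$ norms of $\partial^{\alpha_1}_{\bar\beta}f$ against weighted $\sigma$-norms of $\partial^{\alpha-\alpha_1}_{\beta-\beta_1}f$; splitting on whether $|\alpha_1|\leq N_0$ or $|\alpha_1|> N_0$ and applying the Sobolev embeddings of Corollary \ref{corrollary} folds this contribution into $\mathcal{E}_N(t)\mathcal{E}^1_{N_0,l_0}(t)+\varepsilon\,\mathcal{D}_{N,l}(t)$ under \eqref{E-priori}. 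Summing over all $|\alpha|+|\beta|\leq N$, adding a small multiple of Lemma \ref{Lemma4.4}\,(iii) to recover the full macro dissipation, and absorbing the $\varepsilon\,\mathcal{D}_{N,l}(t)$ remainders then yields \eqref{lemma3.8-1}.

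The principal obstacle is the top-order spatial contribution from $\mp E\cdot v\mu^{1/2}q_1$: the velocity growth $|v|$ prevents control by the $\sigma$-norm dissipation (whose velocity factor $\langle v\rangle^{(\gamma+2)/2}$ decays for $\gamma<-2$), and at $|\alpha|=N$ there is no higher regularity of $E$ to trade via interpolation within this lemma. The strategy is therefore not to close this term here but to leave $\sum_{|\alpha|=N}\|\partial^\alpha E\|\|\mu^\delta\partial^\alpha f\|$ explicitly on the right of \eqref{lemma3.8-1}, to be absorbed in a later step by combining the faster decay of $\mathcal{E}^k_{N_0}(t)$ from Lemma \ref{Lemma4.5} with the extra time-weighted dissipation $(1+t)^{-1-\vartheta}\|\langle v\rangle w_{l-|\beta|}\partial^\alpha_\beta\{{\bf I-P}\}f\|^2$ already built into $\mathcal{D}_{N,l}(t)$.
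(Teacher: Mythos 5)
Your overall strategy is the paper's: a weighted energy estimate with the weight \eqref{weight}, the extra dissipation generated by $\partial_t w^2_{l-|\beta|}$, retention of the top-order source $\sum_{|\alpha|=N}\|\partial^\alpha E\|\|\mu^\delta\partial^\alpha f\|$, and a final combination with the unweighted/macroscopic estimates (the paper combines \eqref{lemma3.7-1} with its weighted inequalities \eqref{3.65}, \eqref{3.67}, \eqref{3.68}; also note the paper takes mixed $x,v$ derivatives of the microscopic equation \eqref{I-P} rather than of $f$ itself, a benign difference). However, there is a genuine gap at the heart of the argument: your claim that, after integrating by parts in $v$, the Lorentz-force term produces ``only weight-derivative tails that are Gaussian-controlled,'' and that the $E\cdot vf$-type commutators are ``dominated by $\mathcal{E}_N(t)\mathcal{E}^1_{N_0,l_0}(t)+\varepsilon\mathcal{D}_{N,l}(t)$ under \eqref{E-priori}.'' These tails are not Gaussian-controlled: $\nabla_v w^2_{l-|\beta|}$ carries a factor $\langle v\rangle(1+t)^{-\vartheta}w^2_{l-|\beta|}$ acting on $f$ (which has no Gaussian decay), and similarly $E\cdot v f$ produces $\|E\|_{L^\infty}\|\langle v\rangle^{1/2}w_l\partial^\alpha f\|^2$-type terms. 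For soft potentials the $\sigma$-norm weight $\langle v\rangle^{(\gamma+2)/2}$ decays, so these cannot be absorbed into $\varepsilon\mathcal{D}_{N,l}(t)$ by Cauchy--Schwarz alone, nor are they bounded by $\mathcal{E}_N\mathcal{E}^1_{N_0,l_0}$. The paper's mechanism (the estimates of $J_4$, $J_5$) is to interpolate in $\langle v\rangle$, pay a factor $(1+t)^{1+\vartheta}$ against the extra dissipation $(1+t)^{-1-\vartheta}\|\langle v\rangle w_l\partial^\alpha f\|^2$, and then use the time decay of $\|E\|_{L^\infty}^{\gamma/(\gamma+1)}$ and $\|\nabla^2(E,B)\|_{H^{N_0-2}}$ supplied by Lemma \ref{Lemma4.5} (available under \eqref{E-priori}) to make the coefficient $(1+t)^{1+\vartheta}\bigl(\|E\|_{L^{\infty}}^{\frac{\gamma}{\gamma+1}}+\|\nabla^2(E,B)\|_{H^{N_0-2}}\bigr)$ small. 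This is precisely where the hypotheses $N_0\geq 4$, $0<\vartheta\leq \frac s2$ (resp.\ $N_0\geq 3$, $0<\vartheta\leq \frac s2-\frac12$) enter; your sketch never invokes them, so as written the decisive absorption step is missing and the estimate would not close.

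A secondary inaccuracy: the reason the term $\sum_{|\alpha|=N}\|\partial^\alpha E\|\|\mu^\delta\partial^\alpha f\|$ must be left on the right-hand side is not velocity growth (the Gaussian $\mu^{1/2}$ kills the factor $v\,w_l^2$, as you yourself note earlier), but purely the regularity loss of the electromagnetic field: $\|\nabla^N E\|^2$ is absent from $\mathcal{D}_{N,l}(t)$ in \eqref{D-}. Likewise its later absorption is carried out in the time-weighted estimate of Lemma \ref{lemma3.12} using the uniform bound on $\mathcal{E}_N(t)$, not via the extra velocity-weighted dissipation as you suggest; this does not affect the present lemma but indicates the same conflation of the two difficulties.
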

\begin{proof} The proof of this lemma is divided into two steps. The first step is to deduce the desired energy type estimates on the derivatives of $f(t,x,v)$ with respect to the $x-$variable only. For this purpose, the standard energy estimate on $\partial^\alpha f$ with $1\leq|\alpha|\leq N$ weighted by the time-velocity dependent function $w_{l}=w_l(t,v)$ gives
\begin{equation}\label{alpha-f}
\begin{aligned}
&\frac{d}{dt}\sum_{1\leq|\alpha|\leq N}\left\|w_l\partial^\alpha f\right\|^2+\sum_{1\leq|\alpha|\leq N}\left\|w_l\partial^\alpha f\right\|^2_\sigma
+\frac{\vartheta q}{(1+t)^{1+\vartheta}}\left\|\langle v\rangle w_l\partial^\alpha f\right\|^2\\
\lesssim&\sum_{1\leq|\alpha|\leq N}\left\|\partial^\alpha f\right\|_\sigma^2+\sum_{1\leq|\alpha|\leq N}
\left\|\partial^\alpha E\right\|\left\|\mu^\delta\partial^\alpha f\right\|
+\underbrace{\sum_{1\leq|\alpha|\leq N}\left|\left(\partial^\alpha (E\cdot vf),w^2_l\partial^\alpha f\right)\right|}_{J_4}\\
&+\underbrace{\sum_{1\leq|\alpha|\leq N}\left|\left(\partial^\alpha[ (E+v\times B)\cdot\nabla_v f],w^2_l\partial^\alpha f\right)\right|}_{J_5}+\underbrace{\sum_{1\leq|\alpha|\leq N}\left|\left(\partial^\alpha \Gamma(f,f),w^2_l\partial^\alpha f\right)\right|}_{J_6}.
\end{aligned}
\end{equation}
As to the estimates on $J_4$, $J_5$, and $J_6$, we can deduce by following exactly the argument used above to control $J_1$, $J_2$, and $J_3$ that
\begin{equation}
\begin{aligned}
J_4\lesssim&\|E\|_{L^\infty}\left\|\langle v\rangle^{1/2}w_l\partial^{\alpha}f\right\|_{L^2}
\left\|\langle v\rangle^{1/2}w_l\partial^\alpha f\right\|\\
&+\sum_{\substack{1\leq|\alpha_1|\leq N_0-1,\\ \alpha_1\neq\alpha}}\left\|\partial^{\alpha_1}E\right\|_{L^6}
\left\|\langle v\rangle^{1/2}w_l\partial^{\alpha-\alpha_1}f\right\|_{L^3}
\left\|\langle v\rangle^{1/2}w_l\partial^\alpha f\right\|\\
&+\sum_{\substack{|\alpha_1|= N_0,\\ \alpha_1\neq\alpha}}\left\|\partial^{\alpha_1}E\right\|
\left\|\langle v\rangle^{1/2}w_l\partial^{\alpha-\alpha_1}f\right\|_{L^\infty}
\left\|\langle v\rangle^{1/2}w_l\partial^\alpha f\right\|\\
&+\sum_{\substack{|\alpha_1|\geq N_0+1,\\ \alpha_1\neq\alpha}}\left\|\partial^{\alpha_1}E\right\|_{L^6}
\left\|\langle v\rangle^{-\frac\gamma2}w_l\partial^{\alpha-\alpha_1}f\right\|_{L^3}
\left\|w_l\partial^\alpha f\right\|_\sigma\\
&+\left\|\partial^{\alpha}E\right\|\left\|\langle v\rangle^{-\frac\gamma2}w_lf\right\|_{L^\infty}
\left\|w_l\partial^\alpha f\right\|_\sigma\\
\lesssim&(1+t)^{1+\vartheta}\left(\| E\|^{\frac{\gamma}{\gamma+1}}_{L^{\infty}} +\left\|\nabla^2E\right\|_{H^{ N_0-2}}\right)\mathcal{D}_{N,l}(t)
+\mathcal{E}_N(t)\mathcal{E}^1_{N_0,l_0}(t)+\varepsilon\mathcal{D}_{N,l}(t),
\end{aligned}
\end{equation}
\begin{equation}
\begin{aligned}
J_5\lesssim&(1+t)^{1+\vartheta}\left(\| E\|^{\frac{\gamma}{\gamma+1}}_{L^{\infty}} +\left\|\nabla^2_x(E,B)\right\|_{H^{ N_0-2}}\right)\mathcal{D}_{N,l}(t)
+\mathcal{E}_N(t)\mathcal{E}^1_{N_0,l_0}(t)+\varepsilon\mathcal{D}_{N,l}(t),
\end{aligned}
\end{equation}
and
\begin{equation}
\begin{aligned}
J_6\lesssim&\left\|\mu^\delta f\right\|_{L^\infty}\left\|w_l\partial^\alpha f\right\|^2_\sigma
 +\left\|\mu^\delta \partial^\alpha f\right\|\left\|w_l f\right\|_{L^\infty_xL^2_\sigma}
 \left\|w_l\partial^\alpha f\right\|_\sigma\\
 &+\sum_{1\leq|\alpha_1|\leq|\alpha|-1}\left\|\mu^{\delta}\partial^{\alpha_1}f\right\|_{L^3}
 \left\|w_l\partial^{\alpha-\alpha_1}f\right\|_{L^6_xL^2_\sigma}\left\|w_l\partial^\alpha f\right\|_\sigma\\
\lesssim& \mathcal{E}^{1/2}_N(t)\mathcal{D}_{N,l}(t).
\end{aligned}
\end{equation}
Here we have used the facts that $l_0\geq l+\frac{\gamma-2}{2(\gamma+2)}$ and $N\leq 2N_0$.

Collecting the above estimates gives the desired weighted energy type estimates on the derivatives of $f(t,x,v)$ with respect to the $x-$variables only as follows
\begin{equation}\label{3.65}
\begin{aligned}
&\frac{d}{dt}\sum_{1\leq|\alpha|\leq N}\left\|w_l\partial^\alpha f\right\|^2
+\sum_{1\leq|\alpha|\leq N}\left\|w_l\partial^\alpha f\right\|^2_\sigma
+\frac{\vartheta q}{(1+t)^{1+\vartheta}}\left\|\langle v\rangle w_l\partial^\alpha f\right\|^2\\[2mm]
\lesssim&\sum_{1\leq|\alpha|\leq N}\left\|\partial^\alpha f\right\|_\sigma^2
+\sum_{1\leq|\alpha|\leq N}\left\|\partial^\alpha E\right\|\left\|\mu^\delta\partial^\alpha f\right\| +\left(\mathcal{E}^{1/2}_N(t)+\varepsilon\right)\mathcal{D}_{N,l}(t)\\
&+(1+t)^{1+\vartheta}\left(\| E\|^{\frac{\gamma}{\gamma+1}}_{L^{\infty}}+\left\|\nabla^2_x(E,B)\right\|_{H^{ N_0-2}}\right)\mathcal{D}_{N,l}(t)
+\mathcal{E}_N(t)\mathcal{E}^1_{N_0,l_0}(t).
\end{aligned}
\end{equation}
Now we turn to get the desired energy type estimates on the mixed derivatives with respect to both $x$ and $v$ variables. Due to
$$
\left\|w_{l-|\beta|}\partial^\alpha_\beta{\bf P}f\right\|\lesssim\left\|\partial^\alpha(a_+,a_-,b,c)\right\|
$$
and notice that $\left\|\partial^\alpha(a_+,a_-,b,c)\right\|$ has been estimated in (\ref{3.65}), we only need to control $\left\|w_{l-|\beta|}\partial^\alpha_\beta\{{\bf I- P}\}f\right\|$ suitably and for this purpose, we need to deduce first the equations which the microscopic components of $f(t,x,v)$ satisfy. In fact, by applying the microscopic projection $\{{\bf I-P}\}$ to the first equation of (\ref{f}), we can get that
\begin{equation}\label{I-P}
\begin{split}
\partial_t\{{\bf I-P}\}f+v\cdot \nabla_x\{{\bf I-P}\}f-&E\cdot v\mu^{1/2}q_1+Lf
=\{{\bf I-P}\}g+{\bf P}(v\cdot\nabla_x f)-v\cdot\nabla_x{\bf P}f.
\end{split}
\end{equation}
From (\ref{I-P}), one can deduce the following weighted energy estimate on $\{{\bf I-P}\}f$
\begin{equation}\label{3.67}
\begin{aligned}
&\frac{d}{dt}\left\|w_l\{{\bf I-P}\}f\right\|^2+\left\|w_l\{{\bf I-P}\}f\right\|_\sigma^2
+\frac{\vartheta q}{(1+t)^{1+\vartheta}}\left\|\langle v\rangle w_l\{{\bf I-P}\}f\right\|^2\\
\lesssim&\left\|\{{\bf I-P}\}f\right\|_\sigma^2+\|E\|^2+\|\nabla f\|_\sigma^2+\left(\mathcal{E}^{1/2}_N(t)+\varepsilon\right)\mathcal{D}_{N,l}(t)\\
&+(1+t)^{1+\vartheta}\| E\|^{\frac{\gamma}{\gamma+1}}_{L^{\infty}}\mathcal{D}_{N,l}(t)
+\mathcal{E}_N(t)\mathcal{E}^1_{N_0,l_0}(t).
\end{aligned}
\end{equation}
Similarly, for the weighted energy estimate on $\{{\bf I-P}\}\partial^\alpha_\beta f$ with $|\alpha|+|\beta|\leq N$ and $|\beta|\geq 1$, we have
\begin{equation}\label{3.68}
\begin{aligned}
&\frac{d}{dt}\left(\sum_{m=1}^{N}C_m\sum_{\substack{|\beta|=m,\\|\alpha|+|\beta|\leq N}}\left\|w_{l-|\beta|}\partial^\alpha_\beta\{{\bf I-P}\}f\right\|^2\right)\\
&+\kappa\sum_{\substack{|\alpha|+|\beta|\leq N,\\|\beta|\geq 1}}\left(\left\| w_{l-|\beta|}\partial^\alpha_\beta\{{\bf I-P}\}f\right\|_\sigma^2
+\frac{q}{(1+t)^{1+\vartheta}}\left\|\langle v\rangle w_{l-|\beta|}\partial^\alpha_\beta\{{\bf I-P}\}f\right\|^2\right)\\
\lesssim&\sum_{|\alpha|\leq N}\left\|w_l\partial^\alpha\{{\bf I-P}\}f\right\|_\sigma^2+\sum_{|\alpha|\leq N-1 }\left(\left\|\nabla_x\partial^\alpha {\bf P} f\right\|^2
+\left\|\partial^\alpha E\right\|^2\right)+\left(\mathcal{E}^{1/2}_N(t)+\varepsilon\right)\mathcal{D}_{N,l}(t)\\
&+(1+t)^{1+\vartheta}\left(\| E\|^{\frac{\gamma}{\gamma+1}}_{L^{\infty}}+\left\|\nabla^2_x(E,B)\right\|_{H^{ N_0-2}}\right)\mathcal{D}_{N,l}(t)
+\mathcal{E}_N(t)\mathcal{E}^1_{N_0,l_0}(t).
\end{aligned}
\end{equation}
Here we used the fact that $\left((v\times B)\cdot\partial^\alpha_\beta\nabla_v \{{\bf I-P} \}f,w^2_{l-|\beta|}\partial_\beta^\alpha \{{\bf I-P} \}f\right)=0.$

Therefore, if we choose $N_0\geq 4$, $0<\vartheta\leq \frac s2$ when $s\in \left[\frac 12,1\right]$ and $N_0\geq 3$, $0<\vartheta\leq \frac s2-\frac12$ when  $s\in \left(1,\frac 32\right)$, a proper linear combination of (\ref{lemma3.7-1}), (\ref{3.65}), (\ref{3.67}) and (\ref{3.68}) implies
\[
\frac{d}{dt}\mathcal{E}_{N,l}(t)+\mathcal{D}_{N,l}(t)\lesssim\mathcal{E}_{N}(t)\mathcal{E}^1_{N_0,l_0}(t)
+\sum_{|\alpha|= N}\left\|\partial^\alpha E\right\|\left\|\mu^\delta\partial^\alpha f\right\|.
\]
Thus we have completed the proof of Lemma \ref{lemma3.8}.
\end{proof}
Having obtained the above estimates, we now turn to close the a priori estimate (\ref{E-priori}). Our first result in this direction is to deduce an estimate on
$\mathcal{\bar{E}}_{N_0,l_0+l^*}(t)$ which is the main content of the following lemma
\begin{lemma}\label{lemma3.10} Under the assumptions listed in Lemma \ref{Lemma4.5}, for $\ell$ satisfying $\ell= l_0+l^*$ with $l^*=l'+\frac{N_0-1}{2}$, we can deduce that there exist an energy functional $\bar{\mathcal{E}}_{N_0,\ell}(t)$ and the corresponding energy dissipation functional $\bar{\mathcal{D}}_{N_0,\ell}(t)$ which satisfy (\ref{E}), (\ref{D}) such that
\begin{equation}\label{lemma3.10-1}
\frac{d}{dt}\mathcal{\bar{E}}_{N_0,\ell}(t)+\mathcal{\bar{D}}_{N_0,\ell}(t)\lesssim\sum_{|\alpha|=N_0}\varepsilon\left\|\partial^\alpha E\right\|^2.
\end{equation}
Furthermore, we can get that
\begin{equation}\label{lemma3.10-2}
\sup_{0\leq \tau\leq t}\mathcal{\bar{E}}_{N_0,l_0+l^*}(\tau)\lesssim Y_0^2+\varepsilon \int_0^t\sum_{|\alpha|=N_0}\left\|\partial^\alpha E\right\|^2d\tau.
\end{equation}
\begin{proof}
Due to the fact that the weighted estimate on the term evolving the $N_0-$order derivative of the term $E\cdot v\mu^{1/2}$ with respect to $x$ can be dominated by
\[
\sum_{|\alpha|=N_0}\left(\partial^{\alpha}E\cdot v\mu^{1/2}, w_{\ell}\partial^\alpha f\right)\lesssim\sum_{|\alpha|=N_0}\left(\varepsilon\left\|\partial^\alpha E\right\|^2+C_\varepsilon\left\|\mu^\delta\partial^\alpha f\right\|^2\right),
\]
and noticing that the derivatives of the electromagnetic field of order up to $N_0$ decay in time, we can get by repeating the argument used to deduce the energy inequality (\ref{lemma3.8-1}) for $\mathcal{E}_{N,l}(t)$ that
\begin{equation}\label{E-ell-1}
\frac{d}{dt}\mathcal{E}_{N_0,\ell}(t)+\mathcal{D}_{N_0,\ell}(t)\lesssim\varepsilon\sum_{|\alpha|=N_0}\|\partial^\alpha E\|^2.
\end{equation}
A proper linear combination of (\ref{Lemma4.1-1}), (\ref{Lemma4.2-1}), (\ref{EB-s}), and (\ref{E-ell-1}) gives (\ref{lemma3.10-1}), therefore (\ref{lemma3.10-2}) follows by the time integration of (\ref{lemma3.10-1}) and using the time decay of $\|\nabla^{N_0} E\|^2$ with $N_0\geq 4$ for $s\in[\frac 12,1]$ and $N_0\geq 3$ for $s\in(1,\frac 32)$ (recall that $k=0,1,\cdots,N_0-2$). This completes the proof of Lemma \ref{lemma3.10}.
\end{proof}
\end{lemma}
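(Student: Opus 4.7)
The plan is to adapt the weighted energy estimate of Lemma \ref{lemma3.8} down to the $N_0$--regularity level, then close by combining it with the already-established negative Sobolev estimates of Lemmas \ref{Lemma4.1}--\ref{lemma3.3} and the spatial decay provided by Lemma \ref{Lemma4.5}.

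First I would rerun the derivation of \eqref{3.65}, \eqref{3.67}, and \eqref{3.68} with $N$ replaced by $N_0$ and $l$ replaced by $\ell = l_0+l^*$. Acting $\partial^\alpha$ on \eqref{f} for $|\alpha|\leq N_0$ and $\partial^\alpha_\beta$ on \eqref{I-P} for $|\alpha|+|\beta|\leq N_0$ with $|\beta|\geq 1$, testing against $w_{\ell-|\beta|}^2\partial^\alpha_\beta f$ and $w_{\ell-|\beta|}^2\partial^\alpha_\beta\{\mathbf{I-P}\}f$ respectively, and invoking the coercivity \eqref{L_0}--\eqref{L_v}, I reproduce the same three weighted identities but at order $N_0$. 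The genuinely new point appears at $|\alpha|=N_0$, where the forcing $\partial^\alpha(E\cdot v\mu^{1/2})$ involves $N_0$ spatial derivatives of $E$, a quantity that does \emph{not} sit inside the dissipation functional. Because $v\mu^{1/2}$ is exponentially decaying, I split by Cauchy--Schwarz
\[
\left|\bigl(\partial^\alpha E\cdot v\mu^{1/2},\, w^2_\ell\partial^\alpha f\bigr)\right|\leq \varepsilon\|\partial^\alpha E\|^2+C_\varepsilon\|\mu^\delta\partial^\alpha f\|^2,
\]
and absorb the second term into the dissipation while tolerating the first on the right-hand side. All remaining terms from $E\cdot vf$, $(E+v\times B)\cdot\nabla_v f$ and $\Gamma(f,f)$ are treated exactly as $J_4,J_5,J_6$ in Lemma \ref{lemma3.8}, the weight inflation $l^*=l'+(N_0-1)/2$ being chosen precisely so that every interpolation inequality from Corollary \ref{corrollary} closes inside $\mathcal{\bar{E}}_{N_0,\ell}$ under the smallness assumption \eqref{E-priori}. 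The outcome is the preliminary bound
\[
\frac{d}{dt}\mathcal{E}_{N_0,\ell}(t)+\mathcal{D}_{N_0,\ell}(t)\lesssim \varepsilon\sum_{|\alpha|=N_0}\|\partial^\alpha E\|^2.
\]

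Next I would combine this with the three negative Sobolev dissipation estimates \eqref{Lemma4.1-1}, \eqref{Lemma4.2-1} and \eqref{EB-s}. Taking sufficiently small coefficients in front of each so that the interaction functionals $G_f^{-s}(t)$ and $G_{E,B}(t)$ are absorbed into the main weighted energy, the cross terms on the right-hand sides involving $\mathcal{\bar{E}}_{1,\cdot}(t)$ and $\mathcal{\bar{E}}_{0,0}(t)$ are swallowed by the smallness of $M$. This produces the target pair $\mathcal{\bar{E}}_{N_0,\ell}$ and $\mathcal{\bar{D}}_{N_0,\ell}$ satisfying \eqref{E}--\eqref{D} together with \eqref{lemma3.10-1}. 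Then \eqref{lemma3.10-2} follows by time integration, using \eqref{Y_0} to bound the initial data by $Y_0^2$ and retaining the $\varepsilon$--residual on the right.

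The main obstacle is the bookkeeping of the weight $\ell=l_0+l^*$: one has to check that every nonlinear estimate arising at the order-$N_0$ weighted level can be interpolated between a negative Sobolev norm controlled by $\|\Lambda^{-s}(\cdot)\|$ and a stronger weighted norm bounded by $\mathcal{\bar{E}}_{N_0,\ell}$, without ever touching a norm exceeding $\|\nabla^{N_0}(E,B)\|$; the extra offset $l'+(N_0-1)/2$ is exactly the velocity weight required by Corollary \ref{corrollary} for these interpolations to close. The residual $\varepsilon\|\nabla^{N_0}E\|^2$ is then harmless because Lemma \ref{Lemma4.5} (applied with $k=N_0-2$) furnishes an integrable-in-time decay rate for it under $N_0\geq 4$ when $s\in[\tfrac12,1]$ and $N_0\geq 3$ when $s\in(1,\tfrac32)$, which is exactly why those lower bounds on $N_0$ enter the statement of Theorem \ref{Th1.1}.
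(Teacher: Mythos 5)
Your proposal is correct and follows essentially the same route as the paper: rerun the weighted estimate of Lemma \ref{lemma3.8} at order $N_0$ with weight $\ell=l_0+l^*$, split the troublesome $\sum_{|\alpha|=N_0}(\partial^\alpha E\cdot v\mu^{1/2},w_\ell^2\partial^\alpha f)$ term by Cauchy--Schwarz into $\varepsilon\|\partial^\alpha E\|^2+C_\varepsilon\|\mu^\delta\partial^\alpha f\|^2$, combine the resulting inequality with \eqref{Lemma4.1-1}, \eqref{Lemma4.2-1}, and \eqref{EB-s} to obtain \eqref{lemma3.10-1}, and integrate in time, invoking the decay of $\|\nabla^{N_0}E\|^2$ from Lemma \ref{Lemma4.5} under the stated conditions on $N_0$ and $s$, to reach \eqref{lemma3.10-2}.
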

To deduce estimates on $\mathcal{E}_N(t)$ and $\mathcal{E}_{N,l}(t)$ based on the a priori estimate (\ref{E-priori}), we need to deduce the time decay of $\mathcal{E}^1_{N_0,l_0}(t)$ which is presented in the following lemma
\begin{lemma}\label{lemma3.11}
Under the assumptions listed in Lemma \ref{Lemma4.5}, for any $\ell$ satisfying $l_0\leq\ell\leq l_0+(N_0-1)/2$, we can deduce that there exist an energy functional $\mathcal{E}^k_{N_0,\ell}(t)$ and the corresponding energy dissipation functional $\mathcal{D}^k_{N_0,\ell}(t)$ which satisfy (\ref{E_l-k}), (\ref{D_l-k}) such that
\begin{equation}\label{lemma3.11-1}
\frac{d}{dt}\mathcal{E}^k_{N_0,\ell}(t)+\mathcal{D}^k_{N_0,\ell}(t)\lesssim \sum_{|\alpha|=N_0}\left\|\partial^\alpha E\right\|^2
\end{equation}
holds for any $0\leq t\leq T$, where $k=0,1,\cdots,N_0-2$.
\end{lemma}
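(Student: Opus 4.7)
The plan is to mimic Lemmas \ref{lemma3.8} and \ref{lemma3.10}, but restrict attention to multi-indices with $|\alpha|\geq k$. For each $(\alpha,\beta)$ with $|\alpha|+|\beta|\leq N_0$ and $|\alpha|\geq k$, I would apply $\partial^\alpha_\beta$ to \eqref{f-sgn} when $\beta=0$, or to the micro equation \eqref{I-P} when $|\beta|\geq 1$, take the $L^2_{x,v}$ inner product with $w^2_{\ell-|\beta|}\partial^\alpha_\beta f$ in the first case and with $w^2_{\ell-|\beta|}\partial^\alpha_\beta\{\mathbf{I-P}\}f$ in the second, and sum the resulting identities with cascading constants $0<\kappa_{|\beta|+1}\ll\kappa_{|\beta|}\ll 1$. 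The weighted coercivity estimates \eqref{L_0}--\eqref{L_v} then produce the dissipative terms $\|w_{\ell-|\beta|}\partial^\alpha_\beta\{\mathbf{I-P}\}f\|^2_\sigma$, while the time-velocity factor in \eqref{weight} generates the extra dissipation $(1+t)^{-1-\vartheta}\|\langle v\rangle w_{\ell-|\beta|}\partial^\alpha_\beta\{\mathbf{I-P}\}f\|^2$, precisely as in \eqref{3.68}. A macro counterpart analogous to \eqref{Lemma3.4-1}--\eqref{Lemma3.4-2}, restricted to $|\alpha|\geq k$, then supplies the remaining pieces of $\mathcal{D}^k_{N_0,\ell}$ listed in \eqref{D_l-k}.

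\textbf{Source terms.} The linear driving term $\mp\partial^\alpha E\cdot v\mu^{1/2}$ (case $\beta=0$) is paired with $w^2_\ell\partial^\alpha f$; by Cauchy--Schwarz and the rapid decay in $v$ of $v\mu^{1/2}$ this contribution is bounded by $\varepsilon\|\partial^\alpha E\|^2+C_\varepsilon\|\mu^\delta\partial^\alpha f\|^2$. For $k\leq|\alpha|\leq N_0-1$ the first summand already lies inside $\|\partial^\alpha(\mathbf{P}f,E,B)\|^2$ in \eqref{D_l-k} and is absorbed; at the top order $|\alpha|=N_0$ it survives and becomes the right-hand side $\sum_{|\alpha|=N_0}\|\partial^\alpha E\|^2$ of \eqref{lemma3.11-1}. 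The Lorentz-force and collision nonlinearities are handled exactly as the terms $J_4$, $J_5$, $J_6$ in the proof of Lemma \ref{lemma3.8}, now invoking Corollary \ref{corrollary}, Lemma \ref{Lemma2.1}, the decay of $\mathcal{E}^j_{N_0}(t)$ from Lemma \ref{Lemma4.5}, and the a priori smallness $X(t)\leq M$ of \eqref{E-priori}; the weight compatibility $l_0\geq l+\tfrac{\gamma-2}{2(\gamma+2)}$ together with the range $\ell\leq l_0+(N_0-1)/2$ is used precisely to make sure every $\langle v\rangle^{-\gamma/2}$ factor produced by the Lorentz force is matched against a norm already present in $\mathcal{E}^k_{N_0,\ell}$. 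The borderline piece $\|E\|_{L^\infty}\|\langle v\rangle^{1/2}w_\ell\partial^\alpha f\|^2$ is treated via an interpolation trading $\langle v\rangle^{1/2}$ against $\langle v\rangle$ and $\langle v\rangle^{(\gamma+2)/2}$ with exponents depending only on $\gamma$, as in the proof of Lemma \ref{lemma3.7}; the slow-decay prefactor is then absorbed by the $(1+t)^{-1-\vartheta}$ dissipation once one uses the decay of $\|E\|_{L^\infty}$ coming from $\mathcal{E}^1_{N_0}(t)$.

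\textbf{Induction in $|\beta|$ and main obstacle.} For $|\beta|\geq 1$ the velocity-derivative coercivity \eqref{L_v} exchanges dissipation of $\partial^\alpha_\beta f$ for small penalties on $\partial^\alpha_{\beta'}$ with $|\beta'|<|\beta|$; the cascading choice of $\kappa_{|\beta|}$ absorbs those penalties into the lower-$|\beta|$ estimates already established. The magnetic contribution $((v\times B)\cdot\nabla_v\partial^\alpha_\beta\{\mathbf{I-P}\}f,w^2_{\ell-|\beta|}\partial^\alpha_\beta\{\mathbf{I-P}\}f)$ vanishes by antisymmetry of $v\times B$, so no velocity-growth obstruction arises here. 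The genuine difficulty, and the reason why a nonzero right-hand side has to remain in \eqref{lemma3.11-1}, is that $\mathcal{D}^k_{N_0,\ell}(t)$ contains $\|\partial^\alpha(E,B)\|^2$ only for $k+1\leq|\alpha|\leq N_0-1$, so the $\|\nabla^{N_0}E\|^2$ contribution from $\partial^{N_0}E\cdot v\mu^{1/2}$ cannot be absorbed and must be left as external forcing. This mismatch is exactly what forces the coupling with the time decay of $\mathcal{E}^j_{N_0}(t)$ from Lemma \ref{Lemma4.5} in the next step, where integrability in time of $\|\nabla^{N_0}E\|^2$ (ensured by $N_0\geq 3$ under the parameter choice of Theorem \ref{Th1.1}) upgrades \eqref{lemma3.11-1} into a concrete decay bound for $\mathcal{E}^k_{N_0,\ell}(t)$.
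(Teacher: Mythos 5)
Your overall strategy is the one the paper follows: weighted estimates on $\partial^\alpha_\beta$ with $|\alpha|\geq k$, the microscopic equation \eqref{I-P} for mixed derivatives, cascading constants in $|\beta|$, the observation that the $\partial^{N_0}E\cdot v\mu^{1/2}$ contribution cannot be absorbed and must be left as the forcing $\sum_{|\alpha|=N_0}\|\partial^\alpha E\|^2$, and a combination with the macro dissipation (the paper simply adds a multiple of \eqref{Lemma4.5-1} rather than re-deriving a restricted version of Lemma \ref{Lemma4.4}, but that is the same content).

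There is, however, a genuine gap at the bottom order $|\alpha|=k$, $\beta=0$. You propose to apply $\partial^\alpha$ to \eqref{f-sgn} and test against $w_\ell^2\partial^\alpha f$ for \emph{all} $|\alpha|\geq k$, i.e. on the full $f$ including $|\alpha|=k$. But the weighted coercivity \eqref{L_0} produces the compensating local term $C_\eta\|\chi_{\{|v|\leq 2C_\eta\}}\nabla^k f\|^2$ (and the cross term $\langle w_\ell^2 L\nabla^k\{\mathbf{I-P}\}f,\nabla^k\mathbf{P}f\rangle$ produces a similar quantity), which is of size $\|\nabla^k(a_\pm,b,c)\|^2$. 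The dissipation $\mathcal{D}^k_{N_0,\ell}(t)$ in \eqref{D_l-k} contains $\|\partial^\alpha(\mathbf{P}f,E,B)\|^2$ only for $k+1\leq|\alpha|\leq N_0-1$, plus $\|\nabla^k(E,a_+-a_-)\|^2$ and $\|\nabla^{N_0}\mathbf{P}f\|^2$; the components $\|\nabla^k(a_++a_-)\|^2$, $\|\nabla^k b\|^2$, $\|\nabla^k c\|^2$ appear nowhere in it, nor in $\mathcal{D}^k_{N_0}(t)$, nor can your restricted macro estimate (whose dissipation starts at $\|\nabla^{k+1}\mathbf{P}f\|^2$) supply them, and they are certainly not bounded by $\sum_{|\alpha|=N_0}\|\partial^\alpha E\|^2$. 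So the differential inequality you would obtain at this order cannot be closed. The fix is exactly what the paper does in \eqref{k-I-P} (and in \eqref{3.67} for the analogous $k=0$ situation in Lemma \ref{lemma3.8}): at the lowest retained order of $x$-derivatives the weighted estimate must be performed on the microscopic component $\nabla^k\{\mathbf{I-P}\}f$ only, whose $\sigma$-dissipation and $(1+t)^{-1-\vartheta}$-dissipation are available at order $k$, while the weighted macroscopic part at order $k$ is recovered afterwards from $\|w_{\ell}\nabla^k\mathbf{P}f\|\lesssim\|\nabla^k(a_\pm,b,c)\|$, which sits in the unweighted functional $\mathcal{E}^k_{N_0}(t)$ coming from \eqref{Lemma4.5-1}. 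For $k+1\leq|\alpha|\leq N_0$ your full-$f$ estimates are fine, since there the compensating macroscopic terms do belong to the dissipation. Aside from this point (and the minor remark that for $|\beta|\geq1$ the magnetic term does not vanish entirely — commutators where $\partial_\beta$ hits $v$ in $v\times B$ survive and must be absorbed as lower-order-in-$\beta$ terms), the argument is the paper's.
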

\begin{proof}
Let $l'$ be given in Lemma \ref{lemma4.3}, we first deduce certain weighted energy type estimates on the derivatives of $f(t,x,v)$ with respect to the $x$ variable only.
For this purpose, take $k+1\leq|\alpha|\leq N_0-1$, by applying $\partial^\alpha$ to $(\ref{f})$ and taking inner product of the resulting identity with $w_{\ell}^2\partial^\alpha f$ over $\mathbb{R}^3_x\times\mathbb{R}^3_v$, we have by using the argument used in deducing Lemma \ref{lemma4.3} that
\begin{equation}\label{l_0-alpha}
\begin{aligned}
&\frac{d}{dt}\left\|w_{\ell}\partial^\alpha f\right\|^2+\left\|w_{\ell}\partial^\alpha f\right\|^2_\sigma
+\frac{1}{(1+t)^{1+\vartheta}}\left\|\langle v\rangle w_{\ell}\partial^\alpha f\right\|^2\\
\lesssim&\left\|\nabla^{|\alpha|}E\right\|^2
+\mathcal{\bar{E}}_{N_0,\ell+l'}(t)\left(\left\|w_{\ell}\nabla^{|\alpha|}f\right\|^2_\sigma+\left\|\nabla^{|\alpha|}(E,B)\right\|^2\right),
\end{aligned}
\end{equation}
while for the case of $|\alpha|= N_0$, one has
\begin{equation}\label{l-N_0}
\begin{aligned}
&\frac{d}{dt}\left\|w_{\ell}\partial^\alpha f\right\|^2+\left\|w_{\ell}\partial^\alpha f\right\|^2_\sigma
+\frac{1}{(1+t)^{1+\vartheta}}\left\|\langle v\rangle w_{\ell}\partial^\alpha f\right\|^2\\
\lesssim&\left\|\nabla^{|\alpha|} E\right\|^2
+\max\left\{\mathcal{E}_{N}(t),\mathcal{\bar{E}}_{N_0,\ell+l'}(t)\right\}
\left(\left\|\nabla^{N_0-1}(E,B)\right\|^2+\left\|\nabla^{N_0-1}f\right\|_\sigma^2+\left\|\nabla^{N_0}f\right\|_\sigma^2\right).
\end{aligned}
\end{equation}
As for the case of $k=1,\cdots, N_0-2$, we have
\begin{equation}\label{k-I-P}
\begin{aligned}
&\frac{d}{dt}\left\|w_{\ell}\nabla^k\{{\bf I-P}\}f\right\|^2+\left\|w_{\ell}\nabla^k\{{\bf I-P}\}f\right\|^2_\sigma
+\frac{1}{(1+t)^{1+\vartheta}}\left\|\langle v\rangle w_{\ell}\nabla^k\{{\bf I-P}\}f\right\|^2\\
\lesssim&\left\|\nabla^k\{{\bf I-P}\}f\right\|_\sigma^2+\left\|\nabla^{k+1}f\right\|_{\sigma}^2
+\left\|\nabla^{k+1}(E,B)\right\|^2+\left\|\nabla^{k}E\right\|^2\\
&+\mathcal{\bar{E}}_{N_0-1,\ell+l'}(t)\left(\left\|w_{\ell}\nabla^k\{{\bf I-P}\}f\right\|^2_\sigma+\left\|\nabla^{k+1}(E,B)\right\|^2\right).
\end{aligned}
\end{equation}

As to the weighted energy type estimates on the mixed derivatives of $f(t,x,v)$ with respect to $x$ and $v$ variables, as pointed out in Lemma \ref{lemma3.8}, we only need to
deduce an estimate on $\left\|w_{l-|\beta|}\partial^\alpha_\beta{\{\bf I- P\}}f\right\|$. To do so, applying $\partial^\alpha_\beta$ with $|\alpha|+|\beta|\leq N_0$ and $k\leq|\alpha|\leq N_0-1$ to $(\ref{I-P})$  and then taking the $L^2$ inner product of the resulting identity with $w^2_{{\ell}-|\beta|}\partial^\alpha_\beta {\bf \{I-P\}}f$, we obtain
\begin{equation}\label{alpha-beta (I-P)f}
\begin{aligned}
&\frac{d}{dt}\left\|w_{\ell-|\beta|}\partial^\alpha_\beta\{{\bf I-P}\}f\right\|^2
+\left\|w_{\ell-|\beta|}\partial^\alpha_\beta\{{\bf I-P}\}f\right\|_\sigma^2
+\frac{1}{(1+t)^{1+\vartheta}}\left\|\langle v\rangle w_{\ell-|\beta|}\partial^\alpha_\beta\{{\bf I-P}\}f\right\|^2\\
\lesssim
&\sum_{0\leq\beta_1\leq\beta}\mathcal{E}_{N_0-1,\ell+l'}(t)\left(\left\|w_{\ell-|\beta_1|}\nabla^{|\alpha|}_x\partial_{\beta_1}\{{\bf I-P}\}f\right\|^2_\sigma
+\left\|\nabla_x^{|\alpha|+1} f\right\|_\sigma^2+\sum_{m=\min\{N_0-1,|\alpha|+1\}}\left\|\nabla_x^{m}(E,B)\right\|^2\right)\\
&+\sum_{|\beta'|<|\beta|}\left\|w_{\ell-|\beta|}\partial^\alpha_{\beta'}\{{\bf I-P}\}f\right\|_\sigma^2
+\left\|w_{\ell-|\beta|+1}\partial^{\alpha+e_i}_{\beta-e_i}\{{\bf I-P}\}f\right\|^2_\sigma+\left\|\nabla_x^{|\alpha|+1} f\right\|_\sigma^2\\
&+\sum_{m=\min\{N_0-1,|\alpha|+1\}}\left\|\nabla_x^{m}(E,B)\right\|^2+\left\|\nabla^k E\right\|^2
+\sum_{k\leq |\alpha|\leq N_0}\left\|\partial^\alpha\{{\bf I-P}\}f\right\|^2_\sigma.
\end{aligned}
\end{equation}
Therefore, a proper linear combination of (\ref{Lemma4.5-1}) (\ref{alpha-f}), (\ref{k-I-P}) and (\ref{alpha-beta (I-P)f}) implies
\begin{equation}
\frac{d}{dt}\mathcal{E}^k_{N_0,\ell}(t)+\mathcal{D}^k_{N_0,\ell}(t)\lesssim \sum_{|\alpha|=N_0}\left\|\partial^\alpha E\right\|^2.
\end{equation}
This completes the proof of the Lemma \ref{lemma3.11}.
\end{proof}
Based on the above lemma, we can obtain the time decay of $\mathcal{E}^k_{N_0,\ell}(t)$.
\begin{lemma}\label{lemma-E-K}
Let $0\leq i\leq k\leq N_0-3$ be an integer,
it holds that
\begin{equation}\label{lemma-E-K-1}
 \begin{split}
\mathcal{E}^k_{N_0,l_0+i/2}(t)
\lesssim X(t)(1+t)^{-k-s+i}, \quad \quad i=0,1,\cdots,k.
\end{split}
\end{equation}
Furthermore, when $s\in[1,3/2)$, we have for $k=0,1,\cdots,N_0-3$ that
\begin{equation}\label{lemma-E-K-2}
\mathcal{E}^k_{N_0,l_0+k/2+1/2}(t)
\lesssim X(t)(1+t)^{1-s}.
\end{equation}
\end{lemma}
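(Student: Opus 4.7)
The plan is to deduce both inequalities by a downward iteration on the weight parameter starting from the uniform-in-time control
\[
\mathcal{E}^k_{N_0,l_0+l^*}(t)\leq \mathcal{\bar{E}}_{N_0,l_0+l^*}(t)\lesssim X(t)
\]
provided by Lemma~\ref{lemma3.10}, and using the differential inequality
\[
\frac{d}{dt}\mathcal{E}^k_{N_0,\ell}(t)+\mathcal{D}^k_{N_0,\ell}(t)\lesssim \|\nabla^{N_0}E\|^2 \lesssim X(t)(1+t)^{-(N_0-2+s)}
\]
of Lemma~\ref{lemma3.11} (the last bound coming from Lemma~\ref{Lemma4.5} with $k=N_0-2$) combined with a weighted-velocity interpolation that trades one half-unit of weight for one unit of temporal decay.

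The algebraic core is the factorization $w_{\ell}^2=w_{\ell-1/2}\cdot w_{\ell+1/2}$ together with the pointwise estimate $|f|_{\sigma,w_\ell}^2\gtrsim |w_{\ell-1/2}f|_2^2$, which follows from \eqref{norm} because $\gamma+2<0$. Applying Cauchy--Schwarz to each weighted $\partial^\alpha_\beta\{{\bf I-P}\}f$ summand of $\mathcal{E}^k_{N_0,\ell}$, and controlling the weight-independent pieces (the ${\bf P}f$-contribution $\|w_{\ell-|\beta|}\partial^\alpha_\beta{\bf P}f\|\lesssim\|\partial^\alpha(a_\pm,b,c)\|$ and the full $(E,B)$-block) by the unweighted decay $\mathcal{E}^k_{N_0}(t)\lesssim X(t)(1+t)^{-(k+s)}$ of Lemma~\ref{Lemma4.5}, one arrives at the interpolation
\[
\bigl(\mathcal{E}^k_{N_0,\ell}(t)\bigr)^2\lesssim \mathcal{D}^k_{N_0,\ell}(t)\cdot \mathcal{E}^k_{N_0,\ell+1/2}(t)+X(t)^2(1+t)^{-2(k+s)}.
\]

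Inserting this bound into the differential inequality of Lemma~\ref{lemma3.11} gives a Bernoulli-type ODE
\[
\frac{d}{dt}\mathcal{E}^k_{N_0,\ell}(t)+\frac{\bigl(\mathcal{E}^k_{N_0,\ell}(t)\bigr)^2}{\mathcal{E}^k_{N_0,\ell+1/2}(t)}\lesssim X(t)(1+t)^{-(N_0-2+s)}.
\]
A standard comparison argument (compare with an ansatz $A(1+t)^{-(r+1)}$) shows that if $\mathcal{E}^k_{N_0,\ell+1/2}(t)\lesssim X(t)(1+t)^{-r}$ with $r+1\leq N_0-3+s$, then $\mathcal{E}^k_{N_0,\ell}(t)\lesssim X(t)(1+t)^{-(r+1)}$: one unit of temporal decay is gained per half-unit drop in weight, up to the cap imposed by the source term. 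Starting from the uniform bound ($r=0$) at $\ell=l_0+l^*$ and iterating downward in half-integer steps, one reaches $\mathcal{E}^k_{N_0,l_0+i/2}(t)\lesssim X(t)(1+t)^{-(k+s-i)}$ for $i=k,k-1,\dots,0$, which is \eqref{lemma-E-K-1}. Running the same iteration one half-step further, down from $\ell=l_0+k/2+1$ to $\ell=l_0+k/2+1/2$, yields exponent $s-1$; the requirement that this exponent be nonnegative is precisely the hypothesis $s\in[1,3/2)$ in \eqref{lemma-E-K-2}.

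The main obstacle lies in verifying that the interpolation genuinely dominates every constituent of $\mathcal{E}^k_{N_0,\ell}$. The $\{{\bf I-P}\}f$ part is handled directly by the $w_{\ell-1/2}$-trick, but the macroscopic derivatives $\|\partial^\alpha(a_\pm,b,c)\|$ at $|\alpha|=k$ and the top-order electromagnetic piece $\|\nabla^{N_0}(E,B)\|^2$ are absent from $\mathcal{D}^k_{N_0,\ell}$ and must instead be absorbed using the already established unweighted decay of $\mathcal{E}^k_{N_0}(t)$. The compatibility of the saturated rate $N_0-3+s$ coming from the source $\|\nabla^{N_0}E\|^2$ with the target rate $k+s$ forces $k\leq N_0-3$, which is exactly the restriction in the statement.
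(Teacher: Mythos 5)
Your proposal is sound in substance and rests on the same two pillars as the paper's proof: the differential inequality of Lemma \ref{lemma3.11} with source $\sum_{|\alpha|=N_0}\|\partial^\alpha E\|^2$, the unweighted decay $\mathcal{E}^k_{N_0}(t)\lesssim X(t)(1+t)^{-(k+s)}$ and $\|\nabla^{N_0}E\|^2\lesssim X(t)(1+t)^{-(N_0-2+s)}$ from Lemma \ref{Lemma4.5} (this is where $k\leq N_0-3$ enters in both arguments), and the half-weight trade $w_\ell^2=w_{\ell-1/2}w_{\ell+1/2}$ with $|g|_{\sigma,w_\ell}\gtrsim|w_{\ell-1/2}g|_2$. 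Where you differ is the decay-extraction machinery: the paper multiplies \eqref{lemma3.11-1} at the weights $\ell=l_0+i/2$ by the prescribed time weights $(1+t)^{k+s-i+\epsilon}$, adds an un-time-weighted top level $\ell=l_0+k/2+1/2$ (resp. $l_0+k/2+1$ when $s\in[1,3/2)$), and telescopes: the term $(1+t)^{k+s-i-1+\epsilon}\mathcal{E}^k_{N_0,l_0+i/2}$ produced by differentiating the time weight is absorbed by the dissipation at weight $l_0+(i+1)/2$ one level up, and the whole sum is then integrated in time. You instead convert the same half-weight trade into a pointwise interpolation $\bigl(\mathcal{E}^k_{N_0,\ell}\bigr)^2\lesssim \mathcal{D}^k_{N_0,\ell}\,\mathcal{E}^k_{N_0,\ell+1/2}+X^2(1+t)^{-2(k+s)}$ and run a Bernoulli-type comparison, gaining one power of $t$ per half-unit of weight by downward iteration. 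Both are standard packagings of the same mechanism; yours avoids the bookkeeping of the telescoping linear combination at the cost of an ODE comparison at each level.

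Two points need repair, though neither is fatal. First, you cannot seed the iteration at $\ell=l_0+l^*$: Lemma \ref{lemma3.11} is only available for $l_0\leq\ell\leq l_0+(N_0-1)/2$ (its proof needs $\bar{\mathcal{E}}_{N_0,\ell+l'}$ under control, i.e.\ $\ell+l'\leq l_0+l^*$). You must seed at $\ell=l_0+(N_0-1)/2$, where the uniform bound $\lesssim X(t)$ still holds by monotonicity of $w_\ell$ in $\ell$; the count of available half-steps, $N_0-1-i$, still exceeds the target rate $k+s-i$ exactly because $k+s\leq N_0-1$ when $k\leq N_0-3$ and $s<3/2$, so the conclusion survives. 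Second, the Bernoulli form with $\mathcal{E}^k_{N_0,\ell+1/2}$ in a denominator is not legitimate as written (you only have an upper bound for it); run the barrier argument directly from $\mathcal{E}^k_{N_0,\ell}\lesssim\bigl(\mathcal{D}^k_{N_0,\ell}\,\mathcal{E}^k_{N_0,\ell+1/2}\bigr)^{1/2}+CX(1+t)^{-(k+s)}$, and note that the additive macroscopic/electromagnetic term caps the attainable rate at $k+s$ (not $N_0-3+s$ as you state); since every rate you claim is at most $k+s$, this does not affect \eqref{lemma-E-K-1} or \eqref{lemma-E-K-2}.
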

\begin{proof}
From Lemma \ref{lemma3.11},
multiplying (\ref{lemma3.11-1}) with $\ell=l_0+i/2$ by $(1+t)^{k+s-i+\epsilon}$ gives
\begin{equation}\label{E-i}
 \begin{split}
&\frac{d}{dt}\left\{(1+t)^{k+s-i+\epsilon}\mathcal{E}^k_{N_0,l_0+i/2}(t)\right\}
+(1+t)^{k+s-i+\epsilon}\mathcal{D}^k_{N_0,l_0+i/2}(t)\\
\lesssim &\sum_{|\alpha|=N_0}(1+t)^{k+s-i+\epsilon}\left\|\partial^\alpha E\right\|^2+(1+t)^{k+s-i-1+\epsilon}\mathcal{E}^k_{N_0,l_0+i/2}(t).
\end{split}
\end{equation}
Here $\epsilon$ is taken as a sufficiently small positive constant.

When $s\in[1/2,1)$, we take $\ell=l_0+k/2+1/2$ in Lemma \ref{lemma3.11}, it holds that
\begin{equation}\label{E_1}
\frac{d}{dt}\mathcal{E}^k_{N_0,l_0+k/2+1/2}(t)+\mathcal{D}^k_{N_0,l_0+k/2+1/2}(t)\lesssim \sum_{|\alpha|=N_0}\left\|\partial^\alpha E\right\|^2.
\end{equation}
Using the difference between the energy functional $\mathcal{E}^k_{N_0,\ell}(t)$ and its dissipation rate $\mathcal{D}^k_{N_0,\ell}(t)$, a proper linear combination of (\ref{E-i}) and (\ref{E_1}) yields
\begin{equation}\label{E-sum}
 \begin{split}
&\frac{d}{dt}\sum_{i=0}^k\left(C_i(1+t)^{k+s-i+\epsilon}\mathcal{E}^k_{N_0,l_0+i/2}(t)+C_{k+1}\mathcal{E}^k_{N_0,l_0+k/2+1/2}(t)\right)\\
&+\left(\sum_{i=0}^k(1+t)^{k+s-i+\epsilon}\mathcal{D}^k_{N_0,l_0+i/2}(t)+\mathcal{D}^k_{N_0,l_0+k/2+1/2}(t)\right)\\
&\lesssim \sum_{|\alpha|=N_0}(1+t)^{k+s+\epsilon}\left\|\partial^\alpha E\right\|^2+(1+t)^{k+s-1+\epsilon}\left\|\nabla^k({\bf P}f,E,B)\right\|^2.
\end{split}
\end{equation}
On the other hand, Lemma \ref{Lemma4.5} tells us that
\begin{equation}\label{E-decay}
 \begin{split}
\sum_{|\alpha|=N_0}(1+t)^{k+s+\epsilon}\left\|\partial^\alpha E\right\|^2+(1+t)^{k+s-1+\epsilon}\left\|\nabla^k({\bf P}f,E,B)\right\|^2\lesssim X(t)(1+t)^{-1+\epsilon}.
\end{split}
\end{equation}
Plugging (\ref{E-decay}) into (\ref{E-sum}) and taking the time integration, it follows that
\begin{equation}\label{e-k-1}
 \begin{split}
&\sum_{i=0}^k(1+t)^{k+s-i+\epsilon}\mathcal{E}^k_{N_0,l_0+i/2}(t)+\mathcal{E}^k_{N_0,l_0+k/2+1/2}(t)\\
&+\int_0^t\left(\sum_{i=0}^k(1+\tau)^{k+s-i+\epsilon}\mathcal{D}^k_{N_0,l_0+i/2}(\tau)+\mathcal{D}^k_{N_0,l_0+k/2+1/2}(\tau)\right)d\tau\\
\lesssim& X(t)(1+t)^{\epsilon},
\end{split}
\end{equation}
which implies that
\begin{equation}
 \begin{split}
\mathcal{E}^k_{N_0,l_0+i/2}(t)
\lesssim X(t)(1+t)^{-k-s+i}, \quad \quad i=0,1,\cdots,k.
\end{split}
\end{equation}
This proves (\ref{lemma-E-K-1}) for the case of $s\in[\frac 12,1)$.

 When $s\in [1,3/2)$, multiplying (\ref{lemma3.11-1}) with $\ell=l_0+k/2+1/2$ by $(1+t)^{s-1+\epsilon}$ gives
\begin{equation}\label{E_s}
 \begin{split}
&\frac{d}{dt}\left((1+t)^{s-1+\epsilon}\mathcal{E}^k_{N_0,l_0+k/2+1/2}(t)\right)+(1+t)^{s-1+\epsilon}\mathcal{D}^k_{N_0,l_0+k/2+1/2}(t)\\
\lesssim& \sum_{|\alpha|=N_0}(1+t)^{s-1+\epsilon}\left\|\partial^\alpha E\right\|^2 +(1+t)^{s-2+\epsilon}\mathcal{E}^k_{N_0,l_0+k/2+1/2}(t).
\end{split}
\end{equation}
On the other hand, we have by taking $\ell=l_0+k/2+1$ in (\ref{lemma3.11-1}) that
\begin{equation}\label{E_2}
\frac{d}{dt}\mathcal{E}^k_{N_0,l_0+k/2+1}(t)+\mathcal{D}^k_{N_0,l_0+k/2+1}(t)\lesssim \sum_{|\alpha|=N_0}\left\|\partial^\alpha E\right\|^2.
\end{equation}
Similar to that of the case of $s\in[1/2,1)$, a proper linear combination of (\ref{E-i}), (\ref{E-s}), (\ref{E_s}), and
(\ref{E_2}) yields
\begin{equation}\label{E-sum-1}
 \begin{split}
&\frac{d}{dt}\sum_{i=0}^{k+1}\left(C_i(1+t)^{k+s-i+\epsilon}\mathcal{E}^k_{N_0,l_0+i/2}(t)
+C_{k+2}\mathcal{E}^k_{N_0,l_0+k/2+1/2}(t)\right)\\
&+\left(\sum_{i=0}^{k+1}(1+t)^{k+s-i+\epsilon}\mathcal{D}^k_{N_0,l_0+i/2}(t)+\mathcal{D}^k_{N_0,l_0+k/2+1/2}(t)\right)\\
\lesssim& \sum_{|\alpha|=N_0}(1+t)^{k+s+\epsilon}\left\|\partial^\alpha E\right\|^2
+(1+t)^{k+s-1+\epsilon}\left\|\nabla^k({\bf P}f,E,B)\right\|^2.
\end{split}
\end{equation}
By the same way used to deduce (\ref{e-k-1}), it follows that
\begin{equation}\label{e-k-2}
 \begin{split}
&\sum_{i=0}^{k+1}(1+t)^{k+s-i+\epsilon}\mathcal{E}^k_{N_0,l_0+i/2}(t)+\mathcal{E}^k_{N_0,l_0+k/2+1}(t)\\
&+\int_0^t\left(\sum_{i=0}^{k+1}(1+\tau)^{k+s-i+\epsilon}\mathcal{D}^k_{N_0,l_0+i/2}(\tau)
+\mathcal{D}^k_{N_0,l_0+k/2+1}(\tau)\right)d\tau \\
\lesssim& X(t)(1+t)^{\epsilon}.
\end{split}
\end{equation}
Combining (\ref{e-k-1}) and (\ref{e-k-2}) yields (\ref{lemma-E-K-1}) and (\ref{lemma-E-K-2}). This completes the proof of Lemma \ref{lemma-E-K}.
\end{proof}
Now we are ready to obtain the closed estimate on $\mathcal{E}_N(t)$ and $\mathcal{E}_{N,l}(t)$ of the time-weighted energy norm $X(t)$ in the following lemma
\begin{lemma}\label{lemma3.12}
It holds that
\begin{equation}
\sup_{0\leq s\leq t}\left\{\mathcal{E}_{N}(s)+(1+s)^{-\frac{1+\epsilon_0}{2}}\mathcal{E}_{N,l}(t) \right\}+\int_{0}^t\mathcal{D}_N(s)ds\lesssim Y_0^2.
\end{equation}
\end{lemma}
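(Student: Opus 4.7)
The plan is to construct a time-weighted Lyapunov functional
\[
\mathcal{L}(t) := K\mathcal{E}_{N}(t)+(1+t)^{-(1+\epsilon_0)/2}\mathcal{E}_{N,l}(t)
\]
for a sufficiently large constant $K>0$, differentiate it in time by combining Lemma \ref{lemma3.7} for the unweighted part and the time-weighted form of Lemma \ref{lemma3.8} for the weighted part, and then integrate from $0$ to $t$. When $\tfrac{d}{dt}$ is applied to $(1+t)^{-(1+\epsilon_0)/2}\mathcal{E}_{N,l}(t)$, the Leibniz rule produces the additional favorable term $\tfrac{1+\epsilon_0}{2}(1+t)^{-(3+\epsilon_0)/2}\mathcal{E}_{N,l}(t)$ on the left-hand side alongside the dissipation $(1+t)^{-(1+\epsilon_0)/2}\mathcal{D}_{N,l}(t)$; both will be essential in absorbing the various error terms.

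For the right-hand side, the error $\tfrac{\delta}{(1+t)^{1+\vartheta}}\mathcal{D}_{N,l}(t)$ from Lemma \ref{lemma3.7}, once multiplied by $K$, is absorbed into $(1+t)^{-(1+\epsilon_0)/2}\mathcal{D}_{N,l}(t)$ because $1+\vartheta\geq(1+\epsilon_0)/2$ for $\epsilon_0$ small and $K\delta$ can be made small by the a priori assumption (\ref{E-priori}). The cross term $\mathcal{E}_N(t)\mathcal{E}^1_{N_0,l_0}(t)$ is controlled using $\mathcal{E}_N(t)\lesssim X(t)\lesssim M$ together with Lemma \ref{lemma-E-K}: for $s\in[\tfrac12,1]$, estimate (\ref{lemma-E-K-1}) with $k=1$, $i=0$ gives $\mathcal{E}^1_{N_0,l_0}(t)\lesssim X(t)(1+t)^{-(1+s)}$, which requires $N_0\geq 4$; for $s\in(1,\tfrac32)$, the bound $\mathcal{E}^1_{N_0,l_0}(t)\lesssim\mathcal{E}^0_{N_0,l_0}(t)\lesssim X(t)(1+t)^{-s}$ is already sufficient since $s>1$, requiring only $N_0\geq 3$. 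In either case the decay rate exceeds $1$, so $\int_0^t\mathcal{E}_N(\tau)\mathcal{E}^1_{N_0,l_0}(\tau)\,d\tau\lesssim M^2$.

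The most delicate piece is the term $(1+t)^{-(1+\epsilon_0)/2}\sum_{|\alpha|=N}\|\partial^\alpha E\|\|\mu^\delta\partial^\alpha f\|$ coming from Lemma \ref{lemma3.8}. I plan to split it via a weighted Cauchy--Schwarz
\[
\|\partial^\alpha E\|\|\mu^\delta\partial^\alpha f\|\leq \varepsilon(1+t)^{(1+\epsilon_0)/2}\|\partial^\alpha E\|^2+C_\varepsilon(1+t)^{-(1+\epsilon_0)/2}\|\mu^\delta\partial^\alpha f\|^2,
\]
so that the first piece contributes $\varepsilon\|\partial^\alpha E\|^2$ with $|\alpha|=N$, already part of $K\mathcal{D}_N(t)$ and absorbed for $\varepsilon\ll K$, while the second piece $C_\varepsilon(1+t)^{-(1+\epsilon_0)}\|\mu^\delta\partial^\alpha f\|^2$ can be dominated after the ${\bf P}+\{{\bf I-P}\}$ split: the ${\bf P}$ component is macroscopic and absorbed by $K\mathcal{D}_N(t)$, and the $\{{\bf I-P}\}$ component is controlled using $\mu^\delta\lesssim w_{l-|\beta|}$ together with the $(1+t)^{-1-\vartheta}$-weighted term already present in $\mathcal{D}_{N,l}(t)$.

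The main obstacle will be the simultaneous balancing of the constants $K$ (large), $\delta$, $\varepsilon$ (small), $\vartheta$ (constrained by the weight $w_\ell$), and $\epsilon_0$ (chosen so that $1+\vartheta\geq(1+\epsilon_0)/2$), together with the requirement that the decay rate of $\mathcal{E}^1_{N_0,l_0}(t)$ strictly exceed $1$---this being precisely the source of the regularity thresholds $N_0\geq 4$ for $s\in[\tfrac12,1]$ and $N_0\geq 3$ for $s\in(1,\tfrac32)$. Once the master inequality
\[
\frac{d}{dt}\mathcal{L}(t)+K\mathcal{D}_N(t)+(1+t)^{-(1+\epsilon_0)/2}\mathcal{D}_{N,l}(t)\lesssim M^2\,h(t)
\]
with $h\in L^1(0,\infty)$ is established, integration together with $\mathcal{L}(0)\lesssim Y_0^2$ yields $\sup_{0\leq s\leq t}\mathcal{L}(s)+\int_0^t\mathcal{D}_N\,d\tau\lesssim Y_0^2+M^2$, and the conclusion then follows by the usual bootstrap since $M$ is assumed sufficiently small.
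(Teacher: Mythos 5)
Your overall strategy (a time-weighted Lyapunov functional built from Lemma \ref{lemma3.7} and Lemma \ref{lemma3.8}, with the cross term $\mathcal{E}_N\mathcal{E}^1_{N_0,l_0}$ made integrable via Lemma \ref{lemma-E-K}) is the same as the paper's, and your treatment of the $\delta(1+t)^{-1-\vartheta}\mathcal{D}_{N,l}$ term and of the decay of $\mathcal{E}^1_{N_0,l_0}$ for the two ranges of $s$ is sound. The genuine gap is in your handling of the top-order term $(1+t)^{-(1+\epsilon_0)/2}\sum_{|\alpha|=N}\|\partial^\alpha E\|\,\|\mu^\delta\partial^\alpha f\|$. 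You place the small weight $\varepsilon$ on the electric field and claim that $\varepsilon\|\partial^\alpha E\|^2$ with $|\alpha|=N$ is ``already part of $K\mathcal{D}_N(t)$''. It is not: by \eqref{D_N} the dissipation $\mathcal{D}_N(t)$ only contains $\|E\|^2_{H^{N-1}}$, i.e.\ derivatives of $E$ up to order $N-1$ --- this is exactly the regularity-loss feature of the Maxwell part that the paper repeatedly emphasizes. With your splitting, $\varepsilon\int_0^t\|\nabla^N E\|^2\,d\tau$ has no control beyond $\varepsilon M t$, which grows linearly, so the master inequality cannot be closed. Your fallback for the second piece is also off: the term $C_\varepsilon(1+t)^{-1-\epsilon_0}\|\mu^\delta\nabla^N\{\mathbf{I}-\mathbf{P}\}f\|^2$ cannot be absorbed by the $(1+t)^{-1-\vartheta}$-weighted part of $(1+t)^{-(1+\epsilon_0)/2}\mathcal{D}_{N,l}(t)$, since the available dissipation carries the faster-decaying factor $(1+t)^{-\frac{1+\epsilon_0}{2}-1-\vartheta}$ and is therefore smaller than the term you want to absorb for large $t$ (this piece is harmless anyway, because $\mu^\delta\lesssim\langle v\rangle^{(\gamma+2)/2}$ gives $\|\mu^\delta\nabla^N f\|^2\lesssim\|\nabla^N\mathbf{P}f\|^2+\|\nabla^N\{\mathbf{I}-\mathbf{P}\}f\|^2_\sigma\lesssim\mathcal{D}_N$).

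The fix, which is what the paper does, is to reverse the weights in the Cauchy--Schwarz step and to enlarge the Lyapunov functional. Estimate
\begin{equation*}
(1+t)^{-\frac{1+\epsilon_0}{2}}\|\nabla^N E\|\,\|\mu^\delta\nabla^N f\|
\lesssim (1+t)^{-1-\epsilon_0}\|\nabla^N E\|^2+\varepsilon\|\mu^\delta\nabla^N f\|^2,
\end{equation*}
so that the small constant sits on the $f$-factor (which $\mathcal{D}_N$ does control), while the $E$-factor carries the extra decay $(1+t)^{-1-\epsilon_0}$ and is bounded by $(1+t)^{-1-\epsilon_0}\mathcal{E}_N(t)$. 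To absorb this last quantity you must add the component $(1+t)^{-\epsilon_0}\mathcal{E}_N(t)$ to your functional $\mathcal{L}(t)$: differentiating it produces the favorable term $\epsilon_0(1+t)^{-1-\epsilon_0}\mathcal{E}_N(t)$ on the left-hand side, which is precisely what soaks up $(1+t)^{-1-\epsilon_0}\|\nabla^N E\|^2$. With that extra component and the reversed splitting, the rest of your argument (integration in time, $\mathcal{L}(0)\lesssim Y_0^2$, smallness of $M$) goes through as in the paper.
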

\begin{proof}
Multiplying (\ref{lemma3.7-1}) by $(1+t)^{-\epsilon_0}$, it holds that
\begin{equation}\label{e-n-1}
\begin{aligned}
&\frac{d}{dt}\left((1+t)^{-\epsilon_0}\mathcal{E}_{N}(t)\right)
+\epsilon_0(1+t)^{-1-\epsilon_0}\mathcal{E}_{N}(t)+(1+t)^{-\epsilon_0}\mathcal{D}_{N}(t)\\
\lesssim&\frac{\delta}{(1+t)^{1+\epsilon_0+\vartheta}}\mathcal{D}_{N,l}(t)
+(1+t)^{-\epsilon_0}\mathcal{E}_{N}(t)\mathcal{E}^1_{N_0,l_0}(t).
\end{aligned}
\end{equation}
Moreover, we can get by multiplying (\ref{lemma3.8-1}) by $(1+t)^{-(1+\epsilon_0)/2}$ that
\begin{equation}\label{e-n-2}
\begin{aligned}
&\frac{d}{dt}\left((1+t)^{-\frac{\epsilon_0+1}2}\mathcal{E}_{N,l}(t)\right)
+\frac{\epsilon_0+1}2(1+t)^{-\frac{\epsilon_0+3}2}\mathcal{E}_{N,l}(t)
+(1+t)^{-\frac{\epsilon_0+1}2}\mathcal{D}_{N,l}(t)\\
\lesssim&(1+t)^{-\frac{\epsilon_0+1}2}\mathcal{E}_{N}(t)\mathcal{E}^1_{N_0,l_0}(t)
+(1+t)^{-\frac{\epsilon_0+1}2}\left\|\nabla^NE\right\|\left\|\mu^\delta\nabla^N f\right\|\\
\lesssim&(1+t)^{-\frac{\epsilon_0+1}2}\mathcal{E}_{N}(t)\mathcal{E}^1_{N_0,l_0}(t)
+(1+t)^{-\epsilon_0-1}\left\|\nabla^NE\right\|^2+\varepsilon\left\|\mu^\delta\nabla^N f\right\|^2.
\end{aligned}
\end{equation}
A proper linear combination of (\ref{lemma3.7-1}), (\ref{e-n-1}) and (\ref{e-n-2}) and
taking the time integration yield
\begin{equation}
\begin{aligned}
&\mathcal{E}_N(t)+(1+t)^{-\frac{\epsilon_0+1}2}\mathcal{E}_{N,l}(t)
+\int^t_0\left((1+s)^{-1-\epsilon_0}\mathcal{E}_N(s)+\mathcal{D}_N(s)\right)ds\\
&+\int^t_0\left((1+s)^{-\frac{\epsilon_0+3}2}\mathcal{E}_{N,l}(s)+(1+s)^{-\frac{\epsilon_0+1}2}
\mathcal{D}_{N,l}(s)\right)ds
\lesssim Y_0^2.
\end{aligned}
\end{equation}
Here when $s\in[\frac12,1]$ we use the fact $\mathcal{E}^1_{N_0,l_0}(t)\leq (1+t)^{-1-s}X(t)$ obtained in Lemma \ref{lemma-E-K}. This completes the proof of Lemma \ref{lemma3.12}.
\end{proof}
Recalling the definition of the $X(t)-$norm, we have from the estimates obtained in Lemma \ref{lemma3.10} and Lemma \ref{lemma3.12} that
\begin{equation}\label{bound-on-X(t)}
X(t)\lesssim Y_0^2.
\end{equation}

Having obtained (\ref{bound-on-X(t)}), the global solvability result follows immediately from the well-established local existence result and by employing the continuation argument in the usual way. This completes the proof of Theorem \ref{Th1.1}.

Based on Theorem 1.1, for $k=0,1,2,\cdots, N_0-2$, we have from  Lemma \ref{Lemma4.5} that
\begin{equation}
\mathcal{E}^k_{N_0}(t)\lesssim Y_0^2(1+t)^{-(k+s)},
\end{equation}
this gives (\ref{TH2-1}).

On the other hand, let $0\leq i\leq k\leq N_0-3$ be an integer, we have from Lemma \ref{lemma-E-K} that
\begin{equation}
 \begin{split}
\mathcal{E}^k_{N_0,l_0+i/2}(t)
\lesssim Y_0^2(1+t)^{-k-s+i}, \quad \quad i=0,1,\cdots,k
\end{split}
\end{equation}
and
\begin{equation}
\mathcal{E}^k_{N_0,l_0+k/2+1/2}(t)
\lesssim Y_0^2(1+t)^{1-s}
\end{equation}
when $s\in[1,3/2)$. Consequently, (\ref{TH2-2})and (\ref{TH2-3}) follow from the above two inequalities.

Next to prove (\ref{TH2-4}), when $N_0+1\leq|\alpha|\leq N-1$, we have by exploiting the interpolation method and by using the time decay of $\left\|\nabla^{N_0}f\right\|$ and the uniform bound of $\left\|\nabla^{N}f\right\|$ that
\begin{equation}
\begin{split}
\left\|\partial^\alpha f\right\|^2\lesssim \left\|\nabla^Nf\right\|^{\frac{|\alpha|-N_0}{N-N_0}}\left\|\nabla^{N_0}f\right\|^{\frac{N-|\alpha|}{N-N_0}}
\lesssim Y_0^2(1+t)^{-\frac{(N-|\alpha|)(N_0-2+s)}{N-N_0}}.
\end{split}
\end{equation}

Finally for (\ref{TH2-5}), by the similar way as above, we can get that
\begin{equation}
\begin{split}
\left\|w_l\partial^\alpha f\right\|^2\lesssim \left\|w_l\nabla^Nf\right\|^{\frac{|\alpha|-N_0}{N-N_0}}\left\|w_l\nabla^{N_0}f\right\|^{\frac{N-|\alpha|}{N-N_0}}
\lesssim Y_0^2(1+t)^{-\frac{(N-|\alpha|)(N_0-3+s)-(1+\epsilon_0)(|\alpha|-N_0)/2}{N-N_0}}.
\end{split}
\end{equation}
Thus the proof of Theorem \ref{Th1.2} is complete.
\bigbreak
\begin{center}
{\bf Acknowledgment}
\end{center}
This work was supported by two grants from the National Natural Science Foundation of China under contracts 10925103 and 11261160485 respectively and ``the Fundamental Research Funds for the Central Universities".

\end{document}